\numberwithin{equation}{section}
\declaretheoremstyle[
  bodyfont=\normalfont\itshape,
  headformat=\NAME\ \NUMBER\NOTE,
]{myplain}
\declaretheoremstyle[
  headformat=\NAME\ \NUMBER\NOTE,
]{mydefinition}
\newcommand{\envqed}{{\lower-0.3ex\hbox{$\triangleleft$}}}
\declaretheorem[style=myplain,numberwithin=section]{theorem}
\declaretheorem[style=myplain,numberlike=theorem]{lemma}
\declaretheorem[style=mydefinition,numberlike=theorem,qed=\envqed]{definition}
\declaretheorem[style=mydefinition,numberlike=theorem,qed=\envqed]{remark}
\let\epsilon\varepsilon
\let\phi\varphi
\let\rho\varrho
\renewcommand{\i}{\mathrm{i}}
\newcommand{\hb}{\hat{B}}
\newcommand{\hl}{\hat{L}}
\newcommand{\hr}{\hat{R}}
\newcommand{\hx}{\hat{x}}
\newcommand{\bx}{\bar{x}}
\newcommand{\tx}{\widetilde{x}}
\newcommand{\tl}{\widetilde{l}}
\newcommand{\bom}{\bar{\omega}}
\renewcommand{\hom}{\hat{\omega}}
\newcommand{\htheta}{\hat{\theta}}
\newcommand{\Ip}{\mathbb{I}}
\newcommand{\orcid}[1]{ORCID:~\href{https://orcid.org/#1}{#1}}
\newenvironment{keywords}{\par\textbf{Key words.}}{\par}
\title{The domain-of-dependence stabilization for cut-cell meshes is fully discretely stable}
\author[1]{Louis~Petri\thanks{\orcid{0009-0005-5322-3982}}}
\author[2]{Gunnar~Birke\thanks{\orcid{0000-0002-2008-6679}}}
\author[2]{Christian~Engwer\thanks{\orcid{0000-0002-6041-8228}}}
\author[1]{Hendrik~Ranocha\thanks{\orcid{0000-0002-3456-2277}}}
\affil[1]{Institute of Mathematics, Johannes Gutenberg University Mainz, Staudingerweg 9, 55128 Mainz, Germany}
\affil[2]{Applied Mathematics, University of M\"unster, Orl\'eans-Ring 10, 48149 M\"unster, Germany}
\date{May 6, 2026} %TODO: date
\begin{document}

\maketitle

\begin{abstract}
\noindent
  We present a fully discrete stability analysis of the
domain-of-dependence stabilization for hyperbolic problems. The method
aims to address issues caused by small cut cells by redistributing
mass around the neighborhood of a small cut cell at a semi-discrete
level. Our analysis is conducted for the linear advection model
problem in one spatial dimension. We demonstrate that fully discrete
stability can be achieved under a time step restriction that does not
depend on the arbitrarily small cells, using an operator norm
estimate. Additionally, this analysis offers a detailed understanding
of the stability mechanism and highlights some challenges associated
with higher-order polynomials. We also propose a way to mitigate these
issues to derive a feasible CFL-like condition.
The analytical findings, as well as the proposed
solution are verified numerically in
one- and two-dimensional simulations.

\end{abstract}

% TODO: keywords
\begin{keywords}
  cut-cell meshes,
  discontinuous Galerkin methods,
  domain-of-dependence stabilization,
  semibounded operators,
  energy stability,
  Runge-Kutta methods
\end{keywords}

%TODO: MSC
% \begin{AMS}
%   65M06, % NA, PDEs, IVPs, IBVPs: Finite difference methods for initial value and initial-boundary value problems involving PDEs
%   65M20, % NA, PDEs, IVPs, IBVPs: Method of lines for initial value and initial-boundary value problems involving PDEs
%   65M70  % NA, PDEs, IVPs, IBVPs: Spectral, collocation and related methods
% \end{AMS}

\section{Introduction}
%%%%%  Motivation/Background

Constructing a body-fitted mesh is time-consuming in cases where computational domains have irregular geometries.
This can be avoided by using an efficient cut-cell mesh that extracts the computational domain from a Cartesian background mesh.
This approach has become increasingly popular over the past decades,
most recently for hyperbolic partial differential equations (PDEs).
When employing discontinuous Galerkin (DG) methods in space, a cut-cell mesh poses several challenges:
besides the arbitrarily shaped cells, the use of explicit solvers for time integration results in a CFL condition that requires a very small time step, which affects both stability and the correct domain of dependence.

%%%%%  State of reaseach/other related works to that field
In recent years, initial progress has been made in the field of hyperbolic cut-cell DG methods.
In \cite{Sticko2019,Guerkan2020,Fu2021}, an adaptation of the ghost penalty method for hyperbolic problems was examined, originating from elliptic cut-cell techniques.
Another approach is the state redistribution method
\cite{Giuliani2021,TAYLOR2025}, which was originally developed for
finite volume schemes and has recently been extended to DG methods \cite{BERGER2021}.
In this work, we analyze the domain-of-dependence (DoD) stabilization introduced in \cite{engwer2020stabilized}.
The core idea of DoD is to add penalty terms into the semidiscretization that redistribute fluxes, adjusting the mass distribution within and between cells while restoring the domain of dependence for the downwind cell.
Promising results have been observed for nonlinear hyperbolic equations in one spatial dimension \cite{may2022dod} and for linear systems in two dimensions \cite{birke2023dod,birke2025error}.
Specifically, it has been shown in \cite{birke2023dod, streitbuerger2022} that the semidiscretization of DoD is stable for linear systems, and according to \cite{may2022dod}, its first-order case produces a monotone scheme.
A list of all proven DoD properties up to this point is given in Table~\ref{table:dod_porperties}.
Similar findings have been achieved for the state redistribution method \cite{TAYLOR2025,Berger2024}.
\begin{table}

\begin{center}
  \caption{Summary of the proven properties of DoD-stabilized DG schemes. Depending on the case considered, the details of the DoD methods may differ from the version introduced in this article.}
    \label{table:dod_porperties}
    \begin{tabular}{rll||r|r|r}
      % \hline
      &&& \multicolumn{1}{|c}{advection}
      & \multicolumn{1}{|c}{linear systems}
      & \multicolumn{1}{|c}{scalar nonlinear eq.}\\\hline\hline
      monotonicity, &fully discrete, &$P^0$
      &
        1D~\cite{engwer2020stabilized}&&1D~\cite{may2022dod} \\
      TVD, &fully discrete, &$P^0$
      &
        1D~\cite{engwer2020stabilized}&& \\
      TVDM, &fully discrete, &$P^1$
      &
        1D~\cite{engwer2020stabilized}&& \\
      SBP, &semidiscrete, &$P^k$
      &
        1D~\cite{petri2026kinetic} & &\\\hline
      $L^1$-stability, &fully discrete, &$P^0$ &1D~\cite{engwer2020stabilized}& & \\
      $L^2$-stability, &semidiscrete, &$P^k$
      & 2D~\cite{streitbuerger2022}
      & 1D~\cite{petri2026kinetic}, 2D~\cite{birke2026energy}%, Gunnar}
                    & 1D~\cite{may2022dod} \\
      \hline
      convergence, &fully discrete, &$P^0$ & 2D~\cite{birke2025error} & &
    \end{tabular}
    \end{center}
\end{table}

%%%%% Our goals/questions/(impact of these results)
The novelty of this paper is, that
we extend the semidiscrete stability analysis of the DoD method to a fully discrete setting.
Specifically, we use an explicit Runge-Kutta method and examine whether the resulting fully discrete scheme is stable, meaning whether the numerical solution satisfies $\|u^{n+1}\| \le \|u^n\|$, $n \ge 0$ under a CFL-like time step restriction that is independent of the arbitrarily small cut cells.

%%%%% Our methodology
Our analysis focuses on the one-dimensional case to avoid irregularly shaped cells and specifically examine the stabilization mechanism for the linear advection equation.
Building on studies that explore the strong stability of explicit Runge-Kutta (RK) methods
\cite{tadmor2002semidiscrete,sun2017stability,ranocha2018L2stability,sun2019strong, achleitner2024necessary},
we utilize the semidiscrete stability of the DoD stabilization to choose high-order time-integration schemes and reduce the fully-discrete stability issue to a CFL-like condition $\Delta t \le c \norm{L}_M \le C$, where $\norm{L}_M$ is the discrete operator norm of the right-hand side $L$ of the semidiscrete system $u'(t) = L u(t)$.
For the unstabilized scheme, this scales inversely with the smallest cell size.
We show that applying DoD stabilization restores this scaling to that of the background cell by analyzing the involved mechanisms.
To do so, we will derive the corresponding system of ordinary differential equations (ODEs) for the semidiscretization and estimate the operator norms of the submatrices in the sparse system.
Using this approach, we can identify, explain, and improve additional mechanisms that impose further restrictions on the time step needed for stability.

%%%%% Structure
This article is organized as follows.
We start by introducing the problem setting, the method, and elaborating on our matrix notation of the semidiscrete system in Section~\ref{sec:preliminaries}.
In Section~\ref{chap:interp_quad_prop}, we provide necessary estimates for the involved quadrature nodes and weights, which we use in Section~\ref{chap:proof_prep} for some initial partial operator norm estimates.
In Section~\ref{chap:stability_proof}, we prove the main result, which states strong stability for an appropriate time step that is independent of arbitrarily small cells.
An approach for improving other related problems is given in Section~\ref{sec:05_pragmatic_corr}.
Finally, we present some numerical results in 1D and 2D that confirm the theoretical observations.

\section{Preliminaries}
\subsection{Problem setting}\label{subsec:preliminaries_problem_setting}

For the scope of this work, we consider the linear advection equation
as a hyperbolic model problem in one spatial dimension
\begin{equation*}
\partial_t u(t,x) + a\partial_x u(t,x) = 0, \quad u(0,x)=u_0(x),
\end{equation*}
on a domain $\Omega=(x_L,x_R)$ with periodic boundary conditions
and $a>0$. We focus on this simple hyperbolic model problem
to investigate the effects of the DoD stabilization alone.
To discretize the spatial domain, we split it into $N$ cells
$E_i = (x_{i-1}, x_i)$ for $i = 1, \hdots, N$ with vertices
$x_0 < x_1 < x_2 < \hdots < x_{N-1} < x_N$. To simulate cut cells,
we include a cut by the vertex $x_c$, such that we have the full set
of vertices given by
$x_0 < x_1 < x_2 < \hdots < x_{c-1} < x_c < x_{c+1} < \hdots < x_{N-1} < x_N$
with the cell sizes
\begin{align*}
\abs{E_i} &= \Delta x_i = \Delta x, \quad i \in \{1,\hdots, N\} \setminus \{c, c+1\}, \\
\abs{E_c} &= \Delta x_c = \alpha \Delta x, \quad
\abs{E_{(c+1)}} = \Delta x_{c+1} = (1-\alpha) \Delta x.
\end{align*}
$\Delta x$ will be referred to as the background cell size.
We assume that the $\alpha \in [0, 1/2]$, which we call the
\textit{cut-cell factor}, such that $E_c$ may be an arbitrarily small cut cell,
while $E_{(c+1)}$ is also a cut cell that is limited by the size $ \Delta x/2$
from below. This spatial setting is visualized in Figure~\ref{fig:cut_cell_setting}.

\begin{figure}[htbp]
    \centering
   \begin{tikzpicture}
        \draw[gray, thick] (0,0) -- (12.5,0);
        \draw[gray, thick, dotted]  (0,0) -- (-0.5, 0);
        \filldraw[black] (1.25,-0.25)  node[anchor=north]{$\Delta x$};
        \filldraw[black] (1.25,0.0)  node[anchor=south]{$E_{(c-2)}$};

        \draw[gray, thick]  (2.5,1/4) -- (2.5, -1/4);
        \draw[gray, thick, dotted]  (2.5,0) -- (2.5, -4/5);
        \filldraw[black] (2.5,-4/5)  node[anchor=north]{$x_{c-2}$};
        \filldraw[black] (3.75,-0.25)  node[anchor=north]{$\Delta x$};
        \filldraw[black] (3.75,0.0)  node[anchor=south]{$E_{(c-1)}$};

        \draw[gray, thick]  (5,1/4) -- (5, -1/4);
        \draw[gray, thick, dotted]  (5,0) -- (5, -4/5);
        \filldraw[black] (5,-4/5)  node[anchor=north]{$x_{c-1}$};
        \filldraw[black ] (5.5,-0.25)  node[anchor=north]{$\alpha \Delta x$};
        \filldraw[black] (5.5,0.0)  node[anchor=south]{$E_{(c)}$};

        \draw[gray, thick]  (6,1/4) -- (6, -1/4);
        \draw[gray, thick, dotted]  (6,0) -- (6, -4/5);
        \filldraw[black] (6,-4/5)  node[anchor=north]{$x_{c}$};
        \filldraw[black] (6.75,-0.25)  node[anchor=north]{$(1-\alpha)\Delta x$};
        \filldraw[black] (6.65,0.0)  node[anchor=south]{$E_{(c+1)}$};

        \draw[gray, thick]  (7.5,1/4) -- (7.5, -1/4);
        \draw[gray, thick, dotted]  (7.5,0) -- (7.5, -4/5);
        \filldraw[black] (7.5,-4/5)  node[anchor=north]{$x_{c+1}$};
        \filldraw[black] (8.75,-0.25)  node[anchor=north]{$\Delta x$};
        \filldraw[black] (8.75,0.00)  node[anchor=south]{$E_{(c+2)}$};

        \draw[gray, thick]  (10,1/4) -- (10, -1/4);
        \draw[gray, thick, dotted]  (10,0) -- (10, -4/5);
        \filldraw[black] (10,-4/5)  node[anchor=north]{$x_{c+2}$};
        \filldraw[black] (11.25,-0.25)  node[anchor=north]{$\Delta x$};
        \filldraw[black] (11.25,0.00)  node[anchor=south]{$E_{(c+3)}$};
        \draw[gray, thick, dotted]  (12.5,0) -- (13, 0);
        %\draw[gray, thick] (-1,-1) -- (2,2);
        %\filldraw[black] (0,0) circle (2pt) node[anchor=west]{Intersection point};
        \end{tikzpicture}
        \caption{One-dimensional cut-cell setting.}
        \label{fig:cut_cell_setting}
\end{figure}
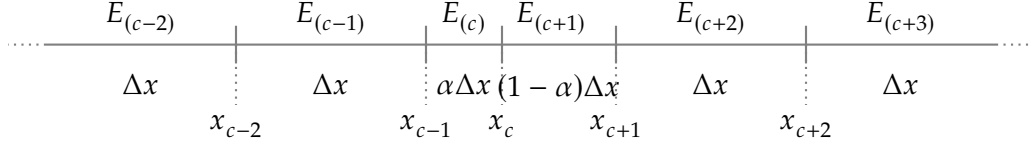

\subsection{Unstabilized scheme}
We follow the method of lines approach and discretize in space using a DG method.
Thus, we consider the polynomial space of degree $p$, denoted by $\mathcal{P}^p$, and the discrete function space
$$ \mathcal{V}_h^p(\Omega) = \left\{v_h \in L^2(\Omega) :v_h|_{E_i} \in \mathcal{P}^p(\Omega), \; i = 1,\hdots ,N \right\}.$$
As local basis on each cell we use a Lagrange basis,
either Gauss-Legendre (GL) or Gauss-Lobatto-Legendre (GLL).
Since the elements of $\mathcal{V}_h^p(\Omega)$ are not well-defined at the vertices,
we also introduce the jump $$\llbracket v_h \rrbracket_i = v_h(x_i)|_{E_{i}}-v_h(x_i)|_{E_{i+1}}.$$
Using a standard upwind flux \cite[Section 3.2.1, eq. (3.8)]{DiPietro2012book}, we obtain the semidiscrete scheme: find
$u_h(t) \in \mathcal{V}_h^p$ such that
\begin{equation}\label{eq:old:background_method}
\left( \partial_t u_h(t), v_h\right)_{L^2(\Omega)} + a_h\bigl( u_h(t), v_h \bigr) = 0, \quad \forall v_h \in \mathcal{V}_h^p,
\end{equation}
with the bilinear form
\begin{equation}\label{eq:old:ah_bilin}
    a_h(u_h, v_h)=-\sum\limits_{i=1}^Na\int\limits_{x_{i-1}}^{x_i}u_h\partial_x v_h \dif x + \sum\limits_{i=1}^Nau_h|_{E_i}(x_i) \llbracket v_h \rrbracket_i,
\end{equation}
where the first sum consists of all volume terms and the second sum of
all flux terms over the boundaries, resulting from integration by
parts.

We will refer to this as the background method, which can be applied
to both the equidistant background mesh without a cut cell and the
cut-cell mesh. For time stepping, we utilize an explicit Runge-Kutta method.

\subsection{DoD stabilization}
In the one-dimensional setting, this scheme has to deal with two problems
that arise because of an underlying cut-cell mesh
(\cite[Section~2.3]{streitbuerger2023diss}, \cite{engwer2020stabilized}):
\begin{itemize}
    \item[1.] Arbitrarily small cells
    \item[2.] Incorrect domain of dependence of the outflow cell
\end{itemize}
Facing these, we want to avoid the background CFL condition
$a\Delta t /(\alpha\Delta x) \le C$, as $\alpha$
may become arbitrarily small, leading to unfeasible time-step conditions.
To address this, we describe the DoD stabilization, applied as a
correction term on the semidiscretization as introduced in
\cite{engwer2020stabilized}.

\begin{definition}[Extension operator]
    Consider $u_h\in \mathcal{V}_h^p(\Omega)$. The extension operator $\mathcal{L}_E$ extends the polynomial function $u_h$ from cell $E$ to the whole domain $\Omega$ by
    \begin{equation}\label{eq:old:extension_operator}
        \mathcal{L}_E : \mathcal{V}_h^p(\Omega)|_{E}\rightarrow P^p(\Omega), \quad \text{where }\; \mathcal{L}_E(u_h) \in P^p(\Omega) \; \text{ with } \; \mathcal{L}_E(u_h)|_E=u_h|_E.
    \end{equation}
    Note that this is just the evaluation of the uniquely defined polynomial $u_h|_E$ outside $E$.
\end{definition}

The DoD stabilization will address the issues by treating the volume and flux terms
of \eqref{eq:old:ah_bilin} separately. To manage the mass flowing into cell
$E_c$ from $E_{c-1}$, we introduce the stabilization term
\begin{equation}\label{eq:old:J0}
J^{0,c}_h\left( u_h, v_h \right) = \eta_c a\left(\mathcal{L}_{E_{(c-1)}}(u_h)(x_c)-u_h|_{E_c}(x_c)\right)\llbracket v_h\rrbracket_c
\end{equation}
for an $\eta_c>0$, which will be specified soon.
We can consider $J^0$ as a redistribution of fluxes, as it decreases the
inflow into the small cell $E_c$ from the inflow cell $E_{c-1}$ and the
outflow from $E_c$ into the larger cut cell $E_{c+1}$.

The DoD stabilization uses an additional term
\begin{equation}\label{eq:old:J1}
    J^{1,c}_h\left( u_h, v_h \right) = \eta_c\int\limits_{x_{c-1}}^{x_c}a \left(\mathcal{L}_{E_{(c-1)}}(u_h)-u_h\right)\cdot \partial_x\left(\mathcal{L}_{E_{(c-1)}}(v_h)-v_h\right) \dif x.
\end{equation}
The term $J^1$ is intended to regulate the gradients within
the cell $E_c$ and is consequently represented by the volume term.
Note that we use the extension operator for the cell $E_{(c-1)}$, as this is
the only inflow cell of $E_c$ in our specific case. Since we are only considering
a single small cut cell, we omit the index $c$ in \eqref{eq:old:J0} and \eqref{eq:old:J1}
and combine them into the full DoD correction term $J_h = J_h^0 + J_h^1$.
The stabilized semidiscretization now takes its final form
\begin{equation}\label{eq:old:dod_semidiscretization}
    \left( \partial_t u_h(t), v_h\right)_{L^2(\Omega)}
    + a_h\left( u_h(t), v_h \right)
    + J_h\left( u_h(t), v_h \right) = 0, \quad \forall v_h \in \mathcal{V}_h^p.
\end{equation}

\begin{remark}\label{remark:multiple_cut_cells}
    In a setting with multiple cut cells that require stabilization,
    we need a correction term for each cut cell. As in the literature,
    we define a set $\mathcal{S}$ of cells
    in need of stabilization (in our 1D case, this set would be defined
    $\mathcal{S}=\{E_j : |E_j|\le \Delta x/2\}$) and then
    determining the stabilization terms by
    $$ J_h^0\left( u_h, v_h \right) = \sum\limits_{j \in \mathcal{S}}J_h^{0,j}\left( u_h, v_h \right), \quad  J_h^1\left( u_h, v_h \right) = \sum\limits_{j \in \mathcal{S}}J_h^{1,j}\left( u_h, v_h \right),$$
    as long as the cut cells are not adjacent. For this practically relevant case
    the stabilization needs to be extended by a weighted redistribution of similar DoD
    terms that gets applied to the whole neighborhood of the adjacent small cells. Here, 
    we follow the existing literature and omit this case, as it would exceed the already
    technical machinery, while not contributing to the main goal of this work.
\end{remark}

\subsection{Stability properties of DoD}
In \cite{streitbuerger2022}, it was shown that the semidiscretization \eqref{eq:old:dod_semidiscretization}
is stable in a semidiscrete sense, i.e., the semidiscrete solution $u_h$ satisfies
\begin{equation}\label{eq:semidiscrete_stability}
    \|u_h(t)\|_{L^2(\Omega)}\le\|u_h(0)\|_{L^2(\Omega)}
\end{equation}
for any $t\ge0$. In the following sections, we will prove that the DoD stabilization
is stable in a fully discrete sense. Specifically, by selecting an explicit RK method
to solve the semidiscrete system, there exists a time-step $\Delta t$ that is
independent of $\alpha$, such that
\begin{equation*}
    \forall n \in \mathbb{N}: \; \|u^{n+1}\|\le\|u^n\|.
\end{equation*}

\subsection{Choice of \texorpdfstring{$\eta_c$}{ηc}}
The parameter $\eta_c \in [0,1]$ determines ``how much'' we want to stabilize: The closer to $1$ it is, the more mass gets redistributed directly to $E_{c+1}$ from $E_{c-1}$.
There are multiple ways to motivate the choice of this parameter.

The simplest is to just consider the cell fraction $\alpha$: We stabilize by $\eta_c = 1-\alpha$, such that the mass inflow from $E_{c-1}$ into $E_c$ is exactly given by the ratio
of its relative volume $\alpha$.

A second choice would take the actual domain of dependence into account: By setting $\eta_c=1-\min(1,\alpha/\nu)$, where $\nu=a \Delta t/\Delta x$ is the Courant number,
we also take care on how many cells the information travels for a time-step. In the first-order case ($p=0$, time stepping with the explicit Euler method), the fully discrete update
for the outflow cell of the small cut cell will then receive the exact amount of information of the cells it depends on \cite[Section~3.1.1]{streitbuerger2023diss}.

The downside of the second choice is, that the resulting semi-discretization depends
on the time step size, which requires to compute the semi-discretization for every step, even for linear systems.
How to choose this parameter exactly remains an open question for now.
Nevertheless, since also the analysis in \cite{birke2025error} considers
a time-step independent $\eta_c$, this indicates that such a choice would be favorable.

For the sake of our analysis, we will refer to a general $\eta_c$ in the following by
\begin{equation}\label{eq:penalty_parameter_defi}
    \eta_c=1-\min\left\{1,\frac{\alpha}{\lambda_c}\right\},
\end{equation}
where $\lambda_c >0$. The previous cases can be restored by setting $\lambda_c = 1$ or changing $\lambda_c = \nu$.
In Section~\ref{sec:05_pragmatic_corr}, we will present a different choice of $\lambda_c$, based on our upcoming analysis.
\subsection{Achieving fully-discrete stability}
For the remaining work, we will formulate the DoD stabilization
in the specific version of the discontinuous Galerkin spectral element method (DGSEM)
and display the resulting semi-discrete system in matrix formulation.
We use this formulation to perform our fully discrete stability analysis.
More precisely, we will use collocation with Gauss-Legendre (GL) and Gauss-Lobatto-Legendre (GLL) nodes and quadrature. We want to emphasize
two important implications:
\begin{itemize}
\item Considering GL nodes for quadrature, the resulting scheme
      evaluates all integrals exactly for the linear problem, regardless of the
      choice of basis functions. Additionally, the solution is independent of a
      specific basis, as the polynomial on each cell is uniquely defined.
      The following analysis examines Lagrange basis functions at Gauss-Legendre
      nodes. However, since the operator norm does not depend on the chosen basis,
      the final result stated in Theorem~\ref{theorem:operatornorm_estimate_lin_adv}
      below still applies to the exact formulation \eqref{eq:old:dod_semidiscretization}
      using exact integrals.
\item Considering GLL nodes as quadrature and Lagrange basis nodes,
      we examine a discontinuous Galerkin spectral element method (DGSEM) with under-integration for constructing
      the mass matrix. Therefore, the resulting system of ODEs will be an approximation
      to the exact semidiscrete system \eqref{eq:old:dod_semidiscretization} for that
      specific choice. In this case, we will also rely on a semi-discrete stability
      argument that has not yet been explicitly formulated. However, following
      \cite{birke2023dod}, the integrals of $a_h$ involve polynomials of
      maximum degree $2p-1$, making the GLL quadrature exact \cite{kopriva2010quadrature}.
      Furthermore, the volume terms
      are solely handled by integration by parts, which is also maintained by the
      summation-by-parts property \cite{gassner2013skew} of the DGSEM.
\end{itemize}
Given a discrete norm $\|\cdot \|_M$ based on the specific DGSEM choice (details are provided in the next Section~\ref{subsec:intro_notation_norms}), we want to achieve the fully discrete stability, yielding
\begin{equation}\label{eq:fully_discrete_stability_in_discrete_norm}
    \forall n \in \mathbb{N}: \; \|u^{n+1}\|_M\le\|u^n\|_M.
\end{equation}
For that, we follow the techniques of \cite{ranocha2018L2stability,tadmor2002semidiscrete} to exploit the semidiscrete stability \eqref{eq:semidiscrete_stability} of the DoD stabilization:
Consider a spatial semidiscretization written in terms of the linear system $\partial_t  u=Lu$.
Given that this is stable, we know that the norm of the solution does not increase in time. We can derive
\begin{align*}
    0 \ge \frac{\text{d}}{\text{d}t}\norm{u}^2 = \left(\frac{\text{d}}{\text{d}t} u, u\right) + \left(u, \frac{\text{d}}{\text{d}t}u\right) = (Lu, u) + (u, Lu).
\end{align*}
This implies $\forall u\colon (Lu, u)\le0$, i.e., the spatial operator $L$ is semibounded.
Applying a suitable explicit Runge-Kutta method for time-stepping, we can exploit the semiboundedness of $L$ to obtain
\begin{equation}\label{eq:energy_estimate_ansatz}
    \| u^{n+1}\|_M^2 -  \| u^n\|_M^2
\end{equation}
non-positive under a condition $\Delta t \norm{L}_M\le C$, where $C$ just depends on the chosen Runge-Kutta method.
For example, we cite the values $C=1$ for SSPRK$(3,3)$ and $C= 0.67493$ for SSPRK$(10,4)$ from \cite{ranocha2018L2stability}.

In general, we obtain such a CFL-like condition for an explicit RK method
of order $p$ with $p$ stages, if $p \equiv 3 \;(\text{mod}\; 4)$
(while several more methods are leading to that), see \cite[Theorem 4.2]{sun2019strong}.
Moreover, Tadmor \cite{tadmor2024runge} proved stability in a weaker sense for a
broader class of RK methods. These results are not directly applicable to this framework
but hint that not only the strongly stable methods classified in
\cite{sun2019strong,achleitner2024necessary} ensure stability.
Therefore, we reach an arbitrarily high order and all boils down to determine the operator norm
$\norm{L}_M$, which usually scales with $1/\min\{\Delta x_i : i = 1,\hdots, N\}$.
This is the common CFL-like condition.
Thus, we will show that the stabilized scheme yields an operator norm that is
independent of arbitrarily small cells with size $\alpha \Delta x$.

We anticipate the answer to this question and state our main result by the following theorem, while we provide its final proof in Section~\ref{chap:stability_proof}.
\begin{theorem}\label{theorem:operatornorm_estimate_lin_adv}
    Consider the system of ODEs $\partial_t u = Lu$ that resembles the semidiscrete system \eqref{eq:old:dod_semidiscretization},
    treated exactly or by under-integration with GLL quadrature.
    There is a constant $C$, just depending on the polynomial degree $p$ and the chosen basis/nodes, such that
    \begin{equation*}
      \|L u\|_M \le \frac{Ca}{\Delta x} \|u\|_M,
    \end{equation*}
    i.e., the operator norm $\|L\|_M$ does not scale with the inverse $\alpha^{-1}$ of the small cut-cell size by applying DoD,
    as it would without that stabilization.
  \end{theorem}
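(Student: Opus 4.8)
The plan is to work entirely on the matrix level. First I would write out the full block structure of $L$ in the chosen DGSEM formulation: $L = M^{-1}(A + J)$, where $M$ is the block-diagonal mass matrix (scaled blocks of size $(p+1)\times(p+1)$ on each cell, the cut-cell blocks carrying the factors $\alpha\Delta x$ and $(1-\alpha)\Delta x$), $A$ encodes the background bilinear form $a_h$, and $J$ encodes the DoD correction $J_h^0 + J_h^1$. Away from the cut, $L$ coincides block-for-block with the standard periodic upwind DGSEM operator, whose operator norm in the $M$-norm is the textbook $\le Ca/\Delta x$ with $C = C(p,\text{nodes})$; so the whole problem localizes to the five cells $E_{c-1},\dots,E_{c+2}$ (really to the rows/columns touching $E_c$ and $E_{c+1}$). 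Since $\|L\|_M = \|M^{-1/2}(A+J)M^{-1/2}\|_2$, I would conjugate everything by $M^{1/2}$ and estimate the $2$-norm of the resulting symmetric-part-controlled matrix block by block, using that $\|L\|_M \le \|M^{-1/2}AM^{-1/2}\|_2 + \|M^{-1/2}JM^{-1/2}\|_2$ and bounding each sparse submatrix (volume term, upwind flux term, $J^0$ flux term, $J^1$ volume term) separately — exactly the partial operator-norm estimates announced for Section~\ref{chap:proof_prep}, built on the quadrature node/weight bounds of Section~\ref{chap:interp_quad_prop}.

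The heart of the matter is to see why the dangerous $1/\alpha$ does not survive. The small-cell mass block contributes $M^{-1/2}$ entries of size $(\alpha\Delta x)^{-1/2}$, so naively any term acting on $E_c$ is amplified by $(\alpha\Delta x)^{-1}$ after the two-sided conjugation. The key structural fact is that $\eta_c = 1 - \min\{1,\alpha/\lambda_c\}$, so for $\alpha \le \lambda_c$ one has $1-\eta_c = \alpha/\lambda_c$, i.e. the coefficient multiplying every interaction that feeds mass \emph{into} $E_c$ (the un-stabilized inflow/outflow through $x_{c-1}$ and $x_c$, and the un-cancelled part of the $E_c$ volume term) carries a compensating factor $\alpha$. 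I would show, term by term, that in the $M^{1/2}$-conjugated operator every entry in the row or column indexed by $E_c$ is a product of at most one $(\alpha\Delta x)^{-1/2}$ (sometimes two), times a bounded interpolation/differentiation coefficient, times a factor $(1-\eta_c)=\alpha/\lambda_c$ or $\sqrt{\alpha}$ coming from the mass scaling on the \emph{other} side — the net power of $\alpha$ being $\ge 0$. Concretely: the pure $E_c$-volume and $E_c$-flux contributions that were present before stabilization get multiplied by $(1-\eta_c)$ inside \eqref{eq:old:dod_semidiscretization}'s first two terms (this is the "redistribution" reading of $J^0$), killing the $1/\alpha$; the genuinely new $J^1$ term \eqref{eq:old:J1} couples $E_{c-1}$ and $E_c$ but its $E_c$-row entries, after conjugation, pick up one $(\alpha\Delta x)^{-1/2}$ from $M_c^{-1/2}$ and one $\sqrt{\alpha\Delta x}$ from integrating $\partial_x(\mathcal{L}_{E_{c-1}}(v_h)-v_h)$ over the length-$\alpha\Delta x$ cell against the extension polynomial, whose derivative on $E_c$ is $O(1/\Delta x)$ times a $p$-dependent constant by the node estimates — so again $O(a/\Delta x)$.

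I would organize the bound as: (i) split $L = L_{\mathrm{bg}} + \delta L$ where $\delta L$ is supported on the $\le 5$ affected cells; (ii) bound $\|L_{\mathrm{bg}}\|_M \le C_1 a/\Delta x$ by the standard DGSEM estimate (exactness of GLL quadrature for degree $\le 2p-1$ and the SBP property, as cited); (iii) bound $\|M^{-1/2}\,\delta L\,M^{-1/2}\|_2$ by summing the handful of submatrix norms, each of the form $(\text{node-dependent }C)\cdot(a/\Delta x)\cdot(\text{nonnegative power of }\alpha\,\text{bounded by }1)$, where the crucial cancellations are the $(1-\eta_c)$ factors and, for $E_{c+1}$, simply $|E_{c+1}|\ge\Delta x/2$; (iv) combine to get $C = C_1 + C_2$ depending only on $p$ and the node family. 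The main obstacle I anticipate is step (iii) for the $J^1$ term at higher $p$: the extension operator $\mathcal{L}_{E_{c-1}}$ evaluated on $E_c$ and its derivative can have norms that grow with $p$ (and in principle could hide an $\alpha$-dependence if $E_c$ sits far from $E_{c-1}$ in scaled coordinates), so one must carefully track that $\mathcal{L}_{E_{c-1}}(u_h)$ restricted to $[x_{c-1},x_c]\subset[x_{c-1},x_{c-1}+\Delta x]$ stays within the "safe" interval where the Lagrange/derivative bounds of Section~\ref{chap:interp_quad_prop} hold with an $\alpha$-free constant — this is exactly the $p$-dependent difficulty the abstract flags and the reason Section~\ref{sec:05_pragmatic_corr} later tunes $\lambda_c$.
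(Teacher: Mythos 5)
Your overall architecture coincides with the paper's: localize to the blocks of $L$ touching $E_{c-1},\dots,E_{c+1}$, estimate each block in the cell-weighted norms (your $M^{1/2}$-conjugation is equivalent to the paper's direct use of $\|\cdot\|_{M_i}$ plus Lemma~\ref{lemma:adjoint_norm}), and use $(1-\eta_c)=\alpha/\lambda_c$ to tame the diagonal block $L_c$ and the reduced inflow. Your closing worry about $p$-dependent extrapolation constants is also exactly what Lemmas~\ref{lemma:interpol_estimate} and~\ref{lemma:interpol_estimate_outflowbound} address. However, there is a genuine gap in your treatment of the $J^1$ coupling blocks, and it sits precisely where the paper needs its two nontrivial structural facts. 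For the $E_c$-row/$u_{(c-1)}$-column block, the derivative appearing in $J_h^{1,c}(u_h,l_{cj})$ is $\partial_x\bigl(\mathcal{L}_{E_{(c-1)}}(l_{cj})-l_{cj}\bigr)=-\partial_x l_{cj}$, which is $O\bigl(1/(\alpha\Delta x)\bigr)$, not $O(1/\Delta x)$ as you assert (the $O(1/\Delta x)$ bound applies to the $E_{(c-1)}$-row, where the test function is $l_{(c-1)j}$). With that corrected, your power counting gives an $O(1)$ entry for this block, and the two-sided conjugation by $M_c^{-1/2}$ and $M_{c-1}^{-1/2}$ leaves a residual $\alpha^{-1/2}$; the claim that "the net power of $\alpha$ is $\ge 0$" fails here, so the argument does not close.

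The paper closes this block in two steps that your plan omits. First, the flux part of $J^1$ recombines with $J^0$: using the summation-by-parts identity $D^TM=\hr^T\hr-\hl^T\hl-MD$ together with $\hr\Ip=B_{J^0}$ and $\hl\Ip=\hr$ (interpolation of a degree-$p$ polynomial at $p+1$ nodes is exact), the raw block $\hb_L+\eta_c D^TM\Ip-\eta_c\hb_{J^0_1}$ collapses to $(1-\eta_c)\hb_L-\eta_c MD\Ip$, which is how the $(1-\eta_c)$ factor reaches the off-diagonal inflow term. Second, the remaining piece is controlled by Lemma~\ref{lemma:deriv_interpol_estimate}, $\|D\Ip\|\le\alpha\,\mathcal{C}(p)$, which holds because the rows of $D$ annihilate constants and the cut-cell nodes $\xi_0,\dots,\xi_p$ all lie within a set of diameter $O(\alpha)$; this $\alpha$-proportionality, not the small integration domain, is what cancels the $1/\alpha$ in $S_c$. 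The same lemma is indispensable for the block $L_{(c-1)R}=S_{c-1}\eta_c M^{-1}\Ip^TD^TM$, which your sketch does not mention at all: there the norm conversion $\|u_c\|_{M_{(c-1)}}^2=\alpha^{-1}\|u_c\|_{M_c}^2$ introduces a factor $1/\alpha$ that is only beaten because $\|M^{-1}\Ip^TD^TM\|=\|D\Ip\|=O(\alpha)$ by Lemmas~\ref{lemma:adjoint_norm} and~\ref{lemma:deriv_interpol_estimate}. Without these two ingredients the estimate for the cut-cell rows and columns cannot be completed.
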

  This implies fully-discrete stability under a cut-cell independent time step restriction, if we apply the DoD stabilization to a cut-cell grid.

\subsection{Introducing notation and norms}\label{subsec:intro_notation_norms}
Considering the semidiscrete system \eqref{eq:old:background_method} or \eqref{eq:old:dod_semidiscretization}, we have $N$ cells
and  due to the Lagrange basis functions of degree $p$, $u\in \mathbb{R}^{(p+1)N}$
will consist of the nodal values, and we can express it as
$$u = (u_1, u_2, \hdots, u_N)^T, \quad u_i\in \mathbb{R}^{(p+1)}, \; i\in \{1, \hdots, N\},$$
where $u_i=(u_{i0}, \hdots, u_{ip})^T\in \mathbb{R}^{p+1}$ are the values and
$l_i=(l_{i0}, \hdots, l_{ip})^T$ are the basis functions in cell $E_i$,
such that $u_h|_{E_i}= \sum_{j=0}^p u_{ij}l_{ij}$.
To simplify the notation later, we consider the reference element $R = [-1, 1]$ and an arbitrary cell $E_i = [x_{i-1}, x_i]$ with $\Delta x_i = x_i-x_{i-1}$.
We can map the solution from $R$ to $E_i$ by
$$ x(\gamma) = \frac{x_i+x_{i-1}}{2} + \gamma \frac{\Delta x_i}{2}.$$
Using the Lagrange basis $\tl = (\tl_0, \hdots, \tl_p)^T$ at reference GL/GLL points in $R$, we have
\begin{equation}\label{eq:reference_vs_physical_cell_representation}
     u|_{E_i} = \sum\limits_{k=0}^p u_{ik}l_{ik}(x(\gamma))=\sum\limits_{k=0}^p u_{ik}\tl_k(\gamma).
\end{equation}
Denote the GL/GLL nodes $\{\widetilde{x}_{0}, \hdots, \widetilde{x}_{Rp}\}$ and quadrature weights $\{\omega_0, \hdots ,\omega_p\}$ on the reference element $R$. Further, we consider the (GL or GLL) cut-cell nodes
$\{\xi_{i0}, \hdots, \xi_{ip}\}\in E_{i+1}=[x_i, x_i+\alpha \Delta x_i]$ and $\{\xi_{0}, \hdots, \xi_{p}\}$ as the transformed nodes to the \textit{reference cut cell} $[1,2\alpha]$.

Considering polynomials $u_h, v_h \in \mathcal{V}_h^p$ in their Lagrange basis
$u_h|_{E_i}= \sum_{j=0}^p u_{ij}l_{ij}, v_h|_{E_i}= \sum_{j=0}^p v_{ij}l_{ij}$,
collocation leads to the discrete inner product
\begin{align*}
(u_h|_{E_i}, v_h|_{E_i})_{M_i}&=\sum\limits_{k=0}^p\omega_k\Delta x_i/2
\sum_{j=0}^p u_{ij}\widetilde{l}_{ij}(\widetilde{x}_k)
\sum_{m=0}^p v_{im}\widetilde{l}_{im}(\widetilde{x}_k)\\
& = \sum\limits_{k=0}^p\omega_k\Delta x_i/2 u_{ik}v_{ik} = u_i^T M_i v_i
\end{align*}
for $i =1, \hdots ,N$.
Here, $\omega_k\Delta x_i/2$ are the scaled quadrature weights for the cell $E_i$
and the resulting mass matrix $M_i=\text{diag}(\omega_1, \hdots ,\omega_p)\Delta x_i/2$ is diagonal.
We can abbreviate this by writing $(u_i, v_i)_{M_i} := (u_h|_{E_i}, v_h|_{E_i})_{M_i}$.
From this point, we can define the discrete norms
\begin{align*}
\|u_i\|_{M_i} := \sqrt{(u_i, v_i)_{M_i}}, \quad \|u\|_M := \sqrt{\sum\limits_{i=1}^N\|u\|_{M_i}^2} = u^TMu,
\end{align*}
where $M = \text{diag}\left(M_1, M_2, \hdots, M_N\right)$ is the global mass matrix.
Analogue, we can define the norm at the reference element $R$ for $u \in \mathbb{R}^{p+1}$, that resembles the nodal values of a cell transformed to $R$:
$$\|u\|_{M_R} = u^TM_Ru, \quad M = \text{diag}(\omega_1, \hdots ,\omega_p).$$

Naturally we can define the operator norm for every introduced vector norm.
In the following, all the respective vector-induced operator norms are denoted by $\|A\|$, while its corresponding vector norm is uniquely defined by the
dimension of $A \in \mathbb{R}^{n\times n}$: For $n=(p+1)\times N$, the vector norm is $\|\cdot\|_{M}$, while for $n=(p+1)$ it is $\|\cdot\|_{M_R}$ or $\|\cdot\|_{M_i}, i \in 1,\hdots, N$.
We can unify the latter cases because they induce the same operator norm: Let $u\in\mathbb{R}^{p+1}, A\in \mathbb{R}^{(p+1)\times (p+1)}$
and use for the cause of clarification $\|A\|_i$ as the induced operator norm of $\|u\|_{M_i}$, then
\begin{equation}\label{eq:opnorm_equality}
    \|A\|_{M_i} = \max\limits_{u \in \mathbb{R}^{p+1}}\frac{\|Au\|_{M_i}}{\|u\|_{M_i}}
    = \max\limits_{u \in \mathbb{R}^{p+1}}\frac{\sqrt{\Delta x_i/2}\|Au\|_{M_R}}{\sqrt{\Delta x_i/2}\|u\|_{M_R}}
        = \max\limits_{u \in \mathbb{R}^{p+1}}\frac{\sqrt{\Delta x_j/2}\|Au\|_{M_j}}{\sqrt{\Delta x_j/2}\|u\|_{M_j}} = \|A\|_{M_j}.
\end{equation}
As usual, when we consider a polynomial space $\mathcal{V}_h^p$, the involved integrals
account for polynomials up to degree $2p$. Therefore, we need to be mindful of the
discrete inner product, which we use to approximate the $L^2$ inner product.
Since the Gauss-Legendre quadrature is exact for polynomials up to degree $2p+1$,
we have for this case
\begin{equation}
    (u,v)_{M} = (u,v)_{L^2([-1, 1])}, \quad u \in \mathcal{P}^p([-1, 1])
\end{equation}
In the case of Gauss-Lobatto-Legendre quadrature, we encounter the well-known issue
that it is exact for polynomials up to degree $2p-1$. Consequently, for arbitrary
$u$ and $v$ of degree $p$, we cannot guarantee exactness. However, with respect
to the discrete norm,
we know that the discrepancy to the exact value of the involved integral
is bounded by a factor $\sqrt{3}$, see \cite{Kopriva2017}, \cite[(5.3.2)]{Canuto2006book}.\\
\subsection{The matrix notation}\label{subsec:matrix_notation}
To prove Theorem~\ref{theorem:operatornorm_estimate_lin_adv}, we
reformulate the semidiscrete system \eqref{eq:old:dod_semidiscretization} as a system of ODEs
\begin{equation*}
    \partial_t u = Lu.
\end{equation*}
    The exact shape of $L$ will be derived in this section.
    We construct all matrices on the reference element, so we can explicitly
    highlight the dependence of the cell size. Therefore, we require some relations at first.
    \begin{definition}\label{defi:semidiscr_matrices}
    We define (see e.g. \cite[eqs.\ (3.31), (3.47)]{Kopriva2009})
    \begin{align*}
        D &= \left(\tl_j'(\widetilde{x}_{Rl})\right)_{l,j \in \{0, \hdots ,p\}},  &&D_{E_i}=\left(l_{ij}'(\xi_{il})\right)_{l,j \in \{0, \hdots ,p\}}, \quad&&\text{(the derivative matrix)}, \\
        \hr &= (\tl_{0}(1), \hdots, \tl_{p}(1)), &&\hr_{E_i} = (l_{i0}(x_i), \hdots, l_{ip}(x_i)), \quad&&\text{(the right boundary extrapolation matrix)}, \\
        \hl &= (\tl_{0}(-1), \hdots, \tl_{p}(-1)), &&\hl_{E_i} = (l_{i0}(x_{i-1}), \hdots, l_{ip}(x_{i-1})),  \quad&&\text{(the left boundary extrapolation matrix)}, \\
        \Ip &= \left(\tl(\xi_{l})\right)_{l,j \in \{0, \hdots ,p\}},  && \Ip_{E_i} = \left(l_{ji}(\xi_{li})\right)_{l,j \in \{0, \hdots ,p\}},    \quad&&\text{(the extrapolation matrix)}.
    \end{align*}
\end{definition}
\begin{lemma}\label{lemma:matrices_scaling_to_reference_element}
    It holds
    \begin{enumerate}
        \item $ D_{E_i} = \frac{2}{\Delta x_i}D, \; M_i = \frac{\Delta x_i}{2}M_R,$
        \item $\hr_{E_i} = \hr, \; \hl_{E_i} = \hl, \; \Ip_{E_i} = \Ip.$
    \end{enumerate}
\end{lemma}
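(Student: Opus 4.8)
The plan is to reduce both parts of the lemma to the single fact that on each physical cell the Lagrange basis functions are pullbacks of the reference ones under the affine map $x(\gamma) = \tfrac{x_i + x_{i-1}}{2} + \gamma\tfrac{\Delta x_i}{2}$, \ie
\[
  l_{ij}\bigl(x(\gamma)\bigr) = \tl_j(\gamma), \qquad j = 0,\dots,p .
\]
This is essentially already encoded in \eqref{eq:reference_vs_physical_cell_representation}: choosing the coefficient vector there to have a single nonzero entry isolates the identity, and alternatively it follows because $\gamma \mapsto l_{ij}(x(\gamma))$ is a polynomial of degree at most $p$ that takes the value $\delta_{jk}$ at the reference node $\tx_{Rk}$ --- since $x(\cdot)$ is affine and maps reference nodes to the nodes of $E_i$ --- so by uniqueness of the nodal interpolant it must equal $\tl_j$. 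I would state and prove this identity first; the rest then follows in a few lines.

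For part~1, the chain rule applied to the identity gives $l_{ij}'\bigl(x(\gamma)\bigr)\,\tfrac{\Delta x_i}{2} = \tl_j'(\gamma)$, hence $l_{ij}'\bigl(x(\gamma)\bigr) = \tfrac{2}{\Delta x_i}\tl_j'(\gamma)$; evaluating at the reference nodes, which are mapped to the nodes of $E_i$, yields $D_{E_i} = \tfrac{2}{\Delta x_i}D$ entry by entry. The mass-matrix relation is immediate from the definitions, since $M_i = \text{diag}(\omega_0,\dots,\omega_p)\,\Delta x_i/2$ and $M_R = \text{diag}(\omega_0,\dots,\omega_p)$; equivalently, it is just the Jacobian factor $\dif x = \tfrac{\Delta x_i}{2}\dif\gamma$ appearing in the collocation quadrature on $E_i$.

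For part~2, I would simply evaluate the identity at the relevant points. The endpoints satisfy $x(-1) = x_{i-1}$ and $x(1) = x_i$, so $l_{ij}(x_{i-1}) = \tl_j(-1)$ and $l_{ij}(x_i) = \tl_j(1)$, which are exactly the statements $\hl_{E_i} = \hl$ and $\hr_{E_i} = \hr$. For the extrapolation matrix, the cut-cell nodes $\xi_{il}$ are by construction the affine images $x(\xi_l)$ of the reference cut-cell nodes $\xi_l$, so $l_{ij}(\xi_{il}) = \tl_j(\xi_l)$ and therefore $\Ip_{E_i} = \Ip$.

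I do not expect a genuine obstacle here: the statement is merely the affine equivalence of nodal (DGSEM) elements written in the present notation. The only point requiring a bit of care is bookkeeping of the two distinct families of evaluation points --- the interior reference nodes $\tx_{Rl}$ used for $D$ and $M_R$ versus the reference cut-cell nodes $\xi_l$ used for $\Ip$ --- together with checking that their ordering is preserved by the affine map, so that the matrix identities hold index by index and not merely up to a permutation.
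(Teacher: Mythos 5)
Your proposal is correct and follows essentially the same route as the paper: both rest on the pullback identity $l_{ij}(x(\gamma))=\tl_j(\gamma)$, obtain the derivative scaling by the chain rule with $x'(\gamma)=\Delta x_i/2$, and read off the remaining identities by evaluating at the relevant points. The only difference is that you explicitly justify the identity via uniqueness of the nodal interpolant and flag the node-ordering bookkeeping, which the paper leaves implicit.
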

\begin{proof}
For fixed $l, j$, we have by transformation $l_{ik}(x(\gamma))=\tl_k(\gamma)$.
    Therefore, $$\tl'_j(\widetilde{x}_{l})= \left(l_{ij}(x_{il}(\widetilde{x}_{l}))\right)'= l'_{ij}(x_{il})\cdot x'_{il}(\widetilde{x}_{l}) = l'_{ij}(x_{il})\frac{\Delta x_i}{2}.$$
The remaining statements follow just by the identity of transformation $l_{ik}(x(\gamma))=\tl_k(\gamma)$ as in \eqref{eq:reference_vs_physical_cell_representation}.
\end{proof}
We proceed inserting all the basis functions $l_1, \hdots, l_N$ into the semidiscrete system to construct $L$.
For $i, k = 1, \hdots, N,\; j=1, \hdots, p$ with $k \neq i$, we have $(\partial_t u_h,l_{ij})_{M_k} = 0, \;(\partial_t u_h,l_{ij})_{M_i} = (\partial_t u_i,l_{ij})_{M_i}$ and $(u_h,\partial_x l_{ij})_{M_i} = (u_i,\partial_x l_{ij})_{M_R}$, because $l_{ij}|_{\Omega\setminus E_i}=0$.
We obtain for the respective terms of \eqref{eq:old:background_method}
\begin{comment}
\begin{align}\label{eq:matrixform_lhs_and_volume}
(\partial_t u_R,l_i)_{M_R}&=\omega_i\partial_t u_{Ri}, \quad \Rightarrow (\partial_t u_R,l)_{M_R} = M\partial_t u_R  \\
\label{eq:matrixform_lhs_and_volume2}
(u_R,\partial_x l_i)_{M_R} &= \sum\limits_{j=0}^pu_j D_{ji}\omega_j= \sum\limits_{j=0}^p D^T_{ij}\omega_ju_j  , \quad \Rightarrow (u_R,\partial_x l)_{M_R}= D^TMu_R.
\end{align}
The remaining boundary terms also require the nodal values $u_{(R-1)}$ of the inflow neighbor cell.
As all their values are at $x=-1$ and $x=1$ generally (here: for GL nodes) not represented by the nodal values, we need extrapolation to the vertices.
Using Definition \ref{defi:semidiscr_matrices}, the boundary terms of \ref{eq:old:ah_bilin}, with $v_h = l_i$ are given by
\begin{equation}\label{eq:matrixform_boundary}
    \begin{aligned}
        &u_R(1)l_i(1)- u_{R-1}(-1)l_i(-1) = l_i(1)\hr u_{R} -l_i(-1)\hr u_{R-1}, \quad i\in \{0, \hdots, p\}\\
        &\Rightarrow u_R(1)l(1)- u_{R-1}(-1)l(-1) = \hr^T \hr u_{R}- \hl^T \hr u_{R-1}
    \end{aligned}
\end{equation}
Finally combining  \eqref{eq:matrixform_lhs_and_volume}, \eqref{eq:matrixform_lhs_and_volume2} and \eqref{eq:matrixform_boundary} transforming to an arbitrary cell $E_i, \; i \in \{1, \hdots N\}$, we obtain
\end{comment}
\begin{align}\label{eq:matrixform_lhs_and_volume}
    (\partial_t u_i,l_j)_{M_i}&=\frac{\Delta x_i}{2}\omega_j\partial_t u_{ij}, \quad \Rightarrow (\partial_t u_i,l)_{M_i} = \frac{\Delta x_i}{2}M\partial_t u_i  \\
    \label{eq:matrixform_lhs_and_volume2}
    (u_i,\partial_x l_j)_{M_i} &= \sum\limits_{k=0}^pu_k \frac{2}{\Delta x_i}D_{kj}\frac{\Delta x_i}{2}\omega_k= \sum\limits_{k=0}^p D^T_{jk}\omega_ku_k  , \quad \Rightarrow (u_i,\partial_x l)_{M_i}= D^TMu_i.
    \end{align}
    The remaining boundary terms also require the nodal values $u_{i-1}$ of the inflow neighbor cell.
    As their values at $x_{i-1}$ and $x_{i}$ are generally (here: for GL nodes) not represented by the nodal values, we need extrapolation to the vertices.
    Using Definition \ref{defi:semidiscr_matrices}, the boundary terms of \eqref{eq:old:ah_bilin}, with $v_h = l_{ij}$ are given by
    \begin{equation}\label{eq:matrixform_boundary}
        \begin{aligned}
            &u_i(x_{i})l_{ij}(x_{i})- u_{i-1}(x_{i-1})l_{ij}(x_{i-1}) = l_{ij}(x_{i})\hr_{E_{i}} u_{i} -l_{ij}(x_{i-1})\hr_{E_{i-1}} u_{i-1}, \quad j\in \{0, \hdots, p\}\\
            &\Rightarrow u_i(x_{i})l_i(x_{i})- u_{i-1}(x_{i-1})l_i(x_{i-1}) = \hr_{E_{i}}^T \hr_{E_{i}} u_{i}- \hl_{E_{i}}^T \hr_{E_{i-1}} u_{i-1} = \hr^T \hr u_{i}- \hl^T \hr u_{i-1}
        \end{aligned}
    \end{equation}
    Finally combining  \eqref{eq:matrixform_lhs_and_volume}, \eqref{eq:matrixform_lhs_and_volume2} and \eqref{eq:matrixform_boundary} transforming to an arbitrary cell $E_i, \; i \in \{1, \hdots, N\}$, we obtain
\begin{equation*}
M\partial_t u_i = \frac{2a}{\Delta x}\left(D^TMu_i - \hr^T\hr u_i +\hl^T\hr u_{i-1}\right)
\end{equation*}
for the background method.

 The same procedure can be applied to the stabilization terms \eqref{eq:old:J0} and \eqref{eq:old:J1}.
 %%% We do not need to be on the referenc cell here, so i cut this out?
 %For that case, let the inflow cell of the cut cell now be the transformed to the reference element $R$
 %and obtains therefore the basis functions  $l=(l_1, \hdots, l_p)^T$.
 %Aligned to this, the small cut cell is thereby transformed to cell $(R_c)= [1, 1+2\alpha]$ and the larger, cut outflow cell to $(R_{(c+1)})=[1+2\alpha, 3]$.
%Let $l_{c}=(l_{1_{c}}, \hdots, l_{p_{c}})^T$ and $l_{(c+1)}=(l_{1_{(c+1)}}, \hdots, l_{p_{(c+1)}})^T$ be the nodal basis functions on these respective (transformed) cells.
Using the notation introduced in \ref{subsec:preliminaries_problem_setting}, we have
\begin{align*}
    J_h^{0,c}(u_h, l_c)&= a\eta_c l_c(x_c)\left(\mathcal{L}_{E_{(c-1)}}(u_h)(x_c)-u_h|_{E_c}(x_c)\right), \\
    J_h^{0,c}(u_h, l_{(c+1)})&= -a\eta_c l_{(c+1)}(x_c)\left(\mathcal{L}_{E_{(c-1)}}(u_h)(x_c)-u_h|_{E_c}(x_c)\right), \\
    J_h^{0,c}(u_h, l_i)&= \mathbf{0}, \quad i\notin \{c, (c+1)\}, \quad \text{with} \; \mathbf{0} = (0,\hdots, 0)^T \in \mathbb{R}^{(p+1)}.
\end{align*}
Now, we want to specify all the appearing terms further. Using Lemma \ref{lemma:matrices_scaling_to_reference_element}, we can write $l_c(x_c)=(l_{c0}(x_c),\hdots, l_{cp}(x_c),)^T=\hr^T$ and $l_{c+1}(x_c)=\hl^T$.
With the same arguments, the extrapolation $B_{J^0}$ from $L_{(c-1)}$ to the outflow boundary $x_c$, that is given by $B_{J^0} u_{(c-1)} = \mathcal{L}_{E_{(c-1)}}(u_h)(x_c)$, we observe that $B_J$ is just the
extrapolation from the basis functions $l_{(c-1)}$ of the inflow cell to the outflow boundary of the cut cell $x_{c}$, i.e., $B_J=\sum_{k=0}^pl_{k(c-1)}(x_c)$.
The last term is just the already treated extrapolation to the outflow boundary and therefore given by $u_h|_{E_c}(x_c)=\hr u_c$. So we can conclude the matrix-reformulation of \eqref{eq:old:J0} to
\begin{equation*}\label{matrixform_J0}
    J_h^{0,c}\left(u_h, \begin{pmatrix} l_c\\ l_{(c+1)}\end{pmatrix}\right)=
    a\eta_c\begin{pmatrix}
        \hr^TB_{J^0} & -\hr^T\hr  \\
        -\hl^TB_{J^0} & \hl^T\hr
    \end{pmatrix}
    \begin{pmatrix}
        u_{(c-1)} \\ u_c
    \end{pmatrix}, \quad J_h^{0,c}(u_h, l_i) = \mathbf{0}, \quad i\notin \{c, (c+1)\}.
\end{equation*}
We abbreviate $\hb_R = \hr^T\hr,\; \hb_L = \hl^T\hr,\; \hat{B}_{J^0_1}=\hr^TB_{J^0}$ and $\hat{B}_{J^0_2}=\hl^TB_{J^0}$ in the remaining work.

Continuing with $J_h^{1,c}$, we just have a few non-zero terms again because of the integral over $E_c$:
\begin{align*}
    J_h^{1,c}(u_h, l_{(c-1)})&= a\eta_c \int\limits_{x_{c-1}}^{x_c}\left(\mathcal{L}_{E_{(c-1)}}(u_h)-u_h\right)\cdot \partial_x\mathcal{L}_{E_{(c-1)}}\left(l_{(c-1)}\right) \dif x, \\
    J_h^{1,c}(u_h, l_c)&= -a\eta_c \int\limits_{x_{c-1}}^{x_c}\left(\mathcal{L}_{E_{(c-1)}}(u_h)-u_h\right)\cdot \partial_x l_c\dif x, \\
    J_h^{1,c}(u_h, l_i)&= \mathbf{0}, \quad i\notin \{(c-1), c\}, \quad \text{with} \; \mathbf{0} = (0,\hdots, 0)^T \in \mathbb{R}^{(p+1)}.
\end{align*}
In these equations, we have to evaluate $\mathcal{L}_{E_{(c-1)}}(u_h)$ inside the cell $E_i$, i.e., at its quadrature nodes. Therefore, we can write this again as the extrapolation matrix $\Ip=\Ip_{E_c}$, such that
$\Ip u_{(c-1)}=\mathcal{L}_{E_{(c-1)}}(u_h)|_{E_c}$. From this point, we can proceed as for \eqref{eq:matrixform_lhs_and_volume2}:
\begin{align*}
    J_h^{1,c}(u_h, l_{ci})&=-a\eta_c\sum\limits_{j=0}^p\left((\Ip u_{(c-1)})_j-u_{c_j}\right)\cdot \frac{2}{\alpha \Delta x}D_{ji}\frac{\alpha \Delta x}{2}\omega_j = -a\eta_c\left(\sum\limits_{j=0}^pD^T_{ij}\omega_j(\Ip u_{(c-1)}-u_{c})_j\right) \\
    \; \Rightarrow J_h^{1,c}&(u_h, l_{c}) = -a\eta_c D^T M (\Ip u_{(c-1)}-u_c) =  a\eta_c
    \begin{pmatrix}-D^TM \Ip & D^T M \end{pmatrix} \begin{pmatrix}u_{(c-1)} \\ u_c \end{pmatrix}.
\end{align*}
To rewrite the last remaining term $J_h^{1,c}(u_h, l_c)$, we first have to focus on $\partial x \mathcal{L}_{E_{in}}(l_{(c-1)j}(x))$ in detail.

For a fixed $j\in \{0,\hdots, p\}$, the (exact) interpolation polynomial of $l'_{(c-1)j}(x)$ at its own nodes $\{x_{(c-1)0}, x_{(c-1)0}, ... , x_{(c-1)p}\}$ is given by itself, i.e.,
\begin{equation*}
    \partial x \mathcal{L}_{E_{in}}(l_{(c-1)j}(x)) = \partial_x l'_{(c-1)j}(x)= l'_{(c-1)j}(x).
\end{equation*}
As we finally want to use the matrices introduced in Definition \ref{defi:semidiscr_matrices}, we interpolate $l_{(c-1)w}(x)$ at the respective nodes $\{x_{c1}, x_{c2}, \hdots, x_{cp}\}$
and basis functions $l_c$ inside the cut cell, i.e. $l_{(c-1)w}(x) = \sum_{k=0}^p l_{ck}(x)l_{(c-1)j}(x_ck)$ (which is again exact because of matching polynomial degrees).
By derivation and inserting a quadrature node of the cut cell, we obtain the desired matrix-notation
$$ \partial_x l_{(c-1)j}(x_{cl})=\sum\limits_{k=0}^p l_{ck}'(x_{cl})l_{(c-1)j}(x_{ck})=(D\cdot \Ip)_{lj}.$$
Now, we can combine these pieces to obtain by the full quadrature
\begin{align*}
    J_h^{1,c}(u_h, l_{(c-1)j})&=a\eta_c\sum\limits_{k=0}^p \left((\Ip u_{(c-1)})_k-u_{c_k}\right)\omega_k l_{ck}'(x_{cl})l_{(c-1)j}(x_{ck})\\
    &= a\eta_c\sum\limits_{k=0}^p (D\cdot \Ip)^T_{jk} \omega_k \left((\Ip u_{(c-1)})_k-u_{c_k}\right),\\
    J_h^{1,c}(u_h, l_{(c-1)})&=a\eta_cD\Ip M\left(\Ip u_{(c-1)}-u_c\right)=  a\eta_c
    \begin{pmatrix}\Ip^TD^TM \Ip & -\Ip^TD^T M \end{pmatrix} \begin{pmatrix}u_{(c-1)} \\ u_c \end{pmatrix}.
\end{align*}
Finally, we have formulated every term of the semidiscretization \eqref{eq:old:dod_semidiscretization} in the matrix notation.

Combining all terms, we obtain the full system
\begin{equation*}
  \partial_t u(t) = Lu(t), \quad L \in \mathbb{R}^{(p+1)N \times (p+1)N}.
\end{equation*}
In particular, $L$ is given by
\begin{equation}\label{eq:dod_semidiscretization_matrix}
L = \left(
\begin{array}{*{13}c}
  L_1 &  &   &   &   &   &   &   &   &   L_{1L}  \\
  L_{2L} & L_2 &  &   &   &   &   &   &   &     \\
    & L_{3L} & L_3 &  &   &   &   &   &   &     \\
    &   & \ddots & \ddots &  &   &   &   &   &      \\
    &   &   & L_{(c-1)L} & L_{(c-1)} & L_{(c-1)R} &   &   &   &      \\
    &   &   &   & L_{cL} & L_c &  &   &   &      \\
    &   &   &   & L_{(c+1)LL}  & L_{(c+1)L} & L_{(c+1)} &  &   &    \\
    &   &   &   &   &   & \ddots & \ddots &  &     \\
    &   &   &   &   &   &   & L_{(N-1)L} & L_{(N-1)} &     \\
    &   &   &   &   &   &   &   & L_{NL} & L_N
\end{array}
\right),
\end{equation}
where the block matrices of the background scheme are given by
\begin{align*}
  L_i & = S_{i}M^{-1}\left(D^TM-\hat{B}_R\right), &\quad i \notin \{(c-1), c\}, \\
  L_{iL} & = S_{i}M^{-1}\hat{B}_L, &\quad i \notin \{c, (c+1)\},
\end{align*}
and the blocks affected by the DoD stabilization take the form
\begin{equation}\label{eq:block_matrices_extended}
  \begin{aligned}
  &L_{(c-1)} = S_{{c-1}}M^{-1}\left(D^TM-\hat{B}_R-\eta_c\Ip^TD^TM\Ip\right), \\
  &L_{(c-1)R} = S_{c-1}M^{-1}\left(\eta_c\Ip^TD^TM\right), \\
  &L_{cL} = S_{c}M^{-1}\left(\eta_c D^T M \Ip + \hb_L - \eta_c \hb_{J^0_1}  \right), \\
  &L_{c} = S_{c}M^{-1}\left( D^TM-\eta_cD^TM -\hb_R + \eta_c \hb_R \right), \\
  &L_{(c+1)LL} = S_{c+1}M^{-1}\eta_c \hb_{J^0_2}, \\
  &L_{(c+1)L} = S_{c+1}M^{-1}\left( \hb_L - \eta_c \hb_L\right),
  \end{aligned}
\end{equation}
with the scaling coefficients
\begin{equation*}
  S_{i} = \frac{2a}{\Delta x_i}.
\end{equation*}
We will use this construction to prove the main result of this work in Section~\ref{chap:stability_proof}, stated by Theorem~\ref{theorem:operatornorm_estimate_lin_adv}.

\label{sec:preliminaries}

\section{Interpolation and quadrature properties}
For the estimates of partial operator norms in Section \ref{chap:proof_prep},
we require certain properties regarding interpolation nodes and quadrature
weights that depend on the polynomial degree and the type of nodes. Specifically,
these include the minimal distance between two nodes and the largest ratio of
two quadrature weights. Since we cannot calculate the nodes and weights exactly
for Gauss-Legendre quadrature or for Gauss-Lobatto-Legendre, we will utilize
known estimates and adapt them in this section to suit our requirements.

Since we are focusing on the reference element $[-1,1]$ in this section 
and have to consider the details of the different node types, 
we specify our notation by denoting the Gauss-Legendre
nodes as $\hx_0, \hdots, \hx_p \in (-1,1)$ and the Gauss-Lobatto-Legendre
nodes as $\bx_0, \hdots, \bx_p\in [-1, 1]$.
Similarly, we denote the respective quadrature weights as
$\hom_0, \hdots, \hom_p$ and $\bom_0, \hdots, \bom_p$.
As this specification will not be
required in the subsequent sections, we will shift then back to the notation 
$\widetilde{x}_0, \hdots, \widetilde{x}_p$, that represents both cases again.

The following estimates are based on known estimates for the Gauss-Legendre and
Gauss-Lobatto-Legendre nodes and weights: For the half of the nodes inside
$(-1, 0)$, we can write
\begin{align}
\hx_{j-1} &= -\cos(\htheta_j),  &j &\in \left\{ 1,\hdots, \left\lfloor\frac{p+1}{2}\right\rfloor \right\} \label{eq:nodes_defi_trigGL},\\
\bx_j &= -\cos(\bar{\theta}_j),  &j &\in \left\{ 1,\hdots, \left\lfloor\frac{p-1}{2}\right\rfloor \right\}\label{eq:nodes_defi_trigGLL},
\end{align}
for some $\htheta, \bar{\theta}\in (0, \frac{\pi}{2})$.
Note that we have $\bx_0=-1$ for every $p$ and $\bx_{\frac{p}{2}}=\hx_{\frac{p}{2}}=0$, if $p$ is even.
Then, we have from \cite[eqs. (2.3.14) and (2.3.15)]{Canuto2006book} that
\begin{align}
    \left(j-\frac{1}{2}\right)\frac{\pi}{p+1}< &\htheta_j <j\frac{\pi}{p+2},  &j &\in \left\{ 1,\hdots, \left\lfloor\frac{p+1}{2}\right\rfloor \right\} \label{eq:nodes_estimate_trigGL},\\
    j\frac{\pi}{p}< &\bar{\theta}_j <\left(j+1 \right)\frac{\pi}{p+1},  &j &\in \left\{ 1,\hdots, \left\lfloor\frac{p-1}{2}\right\rfloor \right\}. \label{eq:nodes_estimate_trigGLL}
\end{align}
We receive the other half of the nodes by symmetry to the origin with the same properties.
Further, detailed information about these estimates can be found in \cite{szegő1975orthogonal} and \cite{Parter1999Lobatto}.
As the proofs of Lemmas~\ref{lemma:weightss_min_quotient}~and~\ref{lemma:nodes_min_distance} are rather technical, we
provide them in the Appendices~\ref{appendix_proof_inter_quad_prop1}~and~\ref{appendix_proof_inter_quad_prop2}.
\begin{lemma}[Minimal weight quotient estimate.]
    \label{lemma:weightss_min_quotient}
    There are constants $\hat{K}_1, \bar{K}_1$, such that for all $i,j \in \{0,\hdots, p\}$
    \begin{equation*}
        \frac{\hom_i}{\hom_j}\le \hat{K}_1p, \quad \frac{\bom_i}{\bom_j}\le \bar{K}_1p,
    \end{equation*}
    i.e., the quotient of any weights increases at worst linearly in $p$.
\end{lemma}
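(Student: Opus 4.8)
We aim for matching two-sided bounds on the individual weights: a uniform upper bound of order $1/p$ and a uniform lower bound of order $1/p^2$, so that dividing the two yields the asserted quotient $\O(p)$, with $\hat K_1,\bar K_1$ absorbing the implied constants (and the trivial cases $p\in\{0,1\}$). Throughout one may restrict to the nodes in $[-1,0]$: the Legendre weight is even, so both node families and their weights are symmetric about the origin, and it suffices to bound $\max_j\hom_j$ from above and $\min_j\hom_j$ from below over $j\in\{1,\dots,\lfloor(p+1)/2\rfloor\}$, and likewise for $\bom_j$.

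For the Gauss--Legendre case I would start from the Christoffel-number formula and the known asymptotics for the Legendre weights (see \cite[Sec.~2.3]{Canuto2006book} and \cite{szegő1975orthogonal}), which provide constants $c_2,C_1>0$ independent of $p$ and $j$ with
\begin{equation*}
  c_2\,\frac{\sin(\htheta_j)}{p} \;\le\; \hom_j \;\le\; C_1\,\frac{\sin(\htheta_j)}{p}.
\end{equation*}
The upper bound $\hom_j\le C_1/p$ is then immediate from $\sin\le 1$. For the lower bound I would feed in the node estimate \eqref{eq:nodes_estimate_trigGL}: every occurring angle satisfies $\htheta_j>(j-\tfrac12)\tfrac{\pi}{p+1}\ge\tfrac{\pi}{2(p+1)}$ while remaining below $\tfrac\pi2$, so by monotonicity of $\sin$ on $[0,\tfrac\pi2]$ together with $\sin x\ge\tfrac2\pi x$ there, $\sin(\htheta_j)\ge\sin\!\big(\tfrac{\pi}{2(p+1)}\big)\ge\tfrac{1}{p+1}$. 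Hence $\hom_j\ge c_2/(p(p+1))\ge c_3/p^2$ for all $j$, and any quotient is at most $(C_1/p)/(c_3/p^2)=(C_1/c_3)\,p=:\hat K_1 p$.

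The Gauss--Lobatto--Legendre case runs in parallel. The endpoint weights are known exactly, $\bom_0=\bom_p=\tfrac{2}{p(p+1)}$, already of order $1/p^2$; for the interior nodes (the extrema of $P_p$) one uses the corresponding representation of $\bom_j$ through $P_p(\bx_j)$ and the known bounds on $|P_p|$ at its extrema to obtain again $\bom_j\asymp\sin(\bar\theta_j)/p$ uniformly in $j$. Combined with \eqref{eq:nodes_estimate_trigGLL}, which gives $\bar\theta_j>j\tfrac{\pi}{p}\ge\tfrac{\pi}{p}$ and hence $\sin(\bar\theta_j)\gtrsim 1/p$ (for $p\ge2$; $p\le1$ is trivial), this produces $\bar c_3/p^2\le\bom_j\le\bar C_1/p$ for all $j$ and therefore $\bom_i/\bom_j\le\bar K_1 p$.

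The one genuinely delicate point --- and the step I expect to be the main obstacle --- is making the \emph{upper} bound on the weights uniform over all nodes, including those clustered near the centre of $[-1,1]$, rather than only on compact subintervals of $(-1,1)$ where textbook asymptotics are typically phrased; this is where one must invoke the sharp, $j$-uniform estimates for Legendre polynomials and their derivatives from \cite{szegő1975orthogonal} and \cite{Parter1999Lobatto}. Once those are available, the remainder is elementary manipulation of \eqref{eq:nodes_estimate_trigGL}--\eqref{eq:nodes_estimate_trigGLL} and basic trigonometric inequalities.
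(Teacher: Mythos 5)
Your proposal is correct and follows essentially the same route as the paper: both use the uniform two-sided estimate $\omega_j \asymp p^{-1}\sqrt{1-x_j^2} = p^{-1}\sin(\theta_j)$ from \cite[eq.~(2.3.16)]{Canuto2006book}, bound $\sin(\theta_j)$ above by $1$ and below by $2\theta_j/\pi \gtrsim 1/(p+1)$ via the angle estimates \eqref{eq:nodes_estimate_trigGL}--\eqref{eq:nodes_estimate_trigGLL}, and handle the Gauss--Lobatto--Legendre boundary weights separately through the exact formula $\bom_0 = 2/(p(p+1))$ where the $\sqrt{1-x^2}$ bound degenerates. The only cosmetic difference is that you package the argument as "uniform $\O(1/p)$ upper bound divided by uniform $\O(1/p^2)$ lower bound," whereas the paper forms the quotient directly; the underlying computation is identical.
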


\begin{lemma}[Minimal node distance estimate.]    \label{lemma:nodes_min_distance}
    There are constants $\hat{K}_2>0$ and $\bar{K}_2>0$, such that for all $i,j \in \{0,\hdots, p\}$ with $i\neq j$,
    \begin{equation*}
        \abs{\hx_i-\hx_j} \ge \frac{\hat{K}_2}{p^3}, \quad \abs{\bx_i-\bx_j} \ge \frac{\bar{K}_2}{p^3}.
    \end{equation*}
\end{lemma}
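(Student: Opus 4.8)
The plan is to bound the smallest pairwise distance by the smallest distance between two \emph{consecutive} nodes near an endpoint of $[-1,1]$ and to read that distance off from the trigonometric estimates \eqref{eq:nodes_estimate_trigGL}--\eqref{eq:nodes_estimate_trigGLL} (the case $p=0$ being vacuous, assume $p\ge1$). Since the nodes of each family are ordered, $\min_{i\neq j}|\hx_i-\hx_j|$ (and likewise for the $\bx_j$) equals the smallest gap between two consecutive nodes, and as both families are symmetric about $0$ the list of these gaps is a palindrome, so it suffices to estimate the gaps lying in the left half $[-1,0]$. The finitely many gaps that touch the centre of the element (for $p$ even the middle node is $0$; for $p$ odd the two innermost nodes straddle $0$) turn out to be of size $\gtrsim 1/p$: for Gauss--Legendre, such a gap is at least $\cos\htheta_m$, where $m=\lfloor(p+1)/2\rfloor$ is the largest index covered by \eqref{eq:nodes_estimate_trigGL}, and $\cos\htheta_m=\sin(\tfrac\pi2-\htheta_m)\gtrsim 1/p$ because the \emph{upper} bound in \eqref{eq:nodes_estimate_trigGL} keeps $\htheta_m$ away from $\tfrac\pi2$ (the GLL case is analogous except for $p$ odd, which I return to below). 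Hence these central gaps are never the minimiser, and I only have to control a gap between two nodes that both lie in the range where the trigonometric estimates hold.

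Next, for two such consecutive Gauss--Legendre nodes $\hx_{j-1}=-\cos\htheta_j$, $\hx_j=-\cos\htheta_{j+1}$ I would use the product-to-sum identity
\[
  |\hx_j-\hx_{j-1}|=\cos\htheta_j-\cos\htheta_{j+1}
  =2\sin\!\Big(\tfrac{\htheta_j+\htheta_{j+1}}2\Big)\sin\!\Big(\tfrac{\htheta_{j+1}-\htheta_j}2\Big).
\]
All angles appearing here lie in $(0,\tfrac\pi2)$ — the upper bound in \eqref{eq:nodes_estimate_trigGL} gives $\htheta_{\lfloor(p+1)/2\rfloor}<\lfloor(p+1)/2\rfloor\,\pi/(p+2)\le\tfrac\pi2$ — so $\sin t\ge\tfrac2\pi t$ applies to both factors, yielding
\[
  |\hx_j-\hx_{j-1}|\ \ge\ \tfrac2{\pi^2}(\htheta_j+\htheta_{j+1})(\htheta_{j+1}-\htheta_j)
  =\tfrac2{\pi^2}\big(\htheta_{j+1}^2-\htheta_j^2\big).
\]
The identical computation applies to the interior Gauss--Lobatto--Legendre nodes $\bx_j=-\cos\bar\theta_j$; the only additional pair there is $(\bx_0,\bx_1)=(-1,-\cos\bar\theta_1)$, for which $|\bx_1-\bx_0|=1-\cos\bar\theta_1=2\sin^2(\tfrac{\bar\theta_1}2)\ge\tfrac2{\pi^2}\bar\theta_1^2$.

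Finally I would insert the interval estimates to bound $\htheta_{j+1}^2-\htheta_j^2=(\htheta_{j+1}-\htheta_j)(\htheta_{j+1}+\htheta_j)$ from below. Combining the lower bound for $\htheta_{j+1}$ with the upper bound for $\htheta_j$ in \eqref{eq:nodes_estimate_trigGL} gives $\htheta_{j+1}-\htheta_j>\pi(j+\tfrac p2+1)/\big((p+1)(p+2)\big)\gtrsim 1/p$, while $\htheta_{j+1}+\htheta_j\ge\htheta_{j+1}\gtrsim 1/p$, hence $|\hx_j-\hx_{j-1}|\gtrsim 1/p^2\ge\hat{K}_2/p^3$. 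For GLL the bounds in \eqref{eq:nodes_estimate_trigGLL} only give $\bar\theta_{j+1}-\bar\theta_j>(j+1)\pi\big(\tfrac1p-\tfrac1{p+1}\big)=(j+1)\pi/\big(p(p+1)\big)$, i.e.\ only of order $1/p^2$, so that $|\bx_j-\bx_{j-1}|\gtrsim 1/p^3$ (and the endpoint pair only improves this to $\gtrsim 1/p^2$); this weaker angle-gap estimate is precisely the source of the $p^3$ in the statement. Collecting the finitely many resulting constants yields admissible $\hat{K}_2$ and $\bar{K}_2$.

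The trigonometric manipulations are routine; the two points that genuinely need care are (i) checking, in each family, that the lower endpoint of the estimate interval for $\htheta_{j+1}$ (resp.\ $\bar\theta_{j+1}$) strictly exceeds the upper endpoint for $\htheta_j$ (resp.\ $\bar\theta_j$), so that the angle gap is provably positive rather than merely nonnegative; and (ii) the bookkeeping of the handful of node pairs outside the clean trigonometric regime — in particular, for $p$ odd, the pair of GLL nodes straddling the origin, whose distance $2|\bx_{(p-1)/2}|=2\cos\bar\theta_{(p-1)/2}$ is controlled by \eqref{eq:nodes_estimate_trigGLL} only up to $\bar\theta_{(p-1)/2}<\tfrac\pi2$. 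For that single pair one invokes a sharper classical bound on the smallest positive zero of $L_p'$ (available from \cite{szegő1975orthogonal,Parter1999Lobatto}), which is harmless since that pair is nowhere near being the minimiser.
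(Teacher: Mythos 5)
Your proposal is correct and takes essentially the same route as the paper's proof: the same product-to-sum identity, the bound $\sin t \ge 2t/\pi$ on both factors, and insertion of the Szeg\H{o}-type angle interval estimates, with the central gaps and the GLL endpoint gap treated separately. If anything, you are more careful than the paper about the odd-$p$ Gauss--Lobatto pair straddling the origin, whose distance is not quantitatively controlled by the stated angle bounds alone and which the paper glosses over.
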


\begin{remark}\label{rmk:note_to_node_scaling}
We note that the cubic dependency on $p$ for the GLL case yields closer nodes for higher orders.
Numerical tests indicate that a bound $\bx_i-\bx_j\ge {C}/{p^2}$ (as we technically achieved inside the proof of Lemma 3.2 for the Gauss-Legendre nodes) seems more valid.
Using other boundary estimates for $\theta_j$, as in \cite[(Lemma 2.4)]{Parter1999Lobatto} might be sufficient for that.
\end{remark}

\label{chap:interp_quad_prop}

\section{Preparation of the proof}
In this section, we analyze the operator norm of the various forms of the
extension operator $\mathcal{L}_{E_{in}}$ that is given by $\Ip$ in the matrix notation. Because of Lemma~\ref{lemma:matrices_scaling_to_reference_element},
we can consider the reference element, where $\Ip$ interpolates the solution
from $[-1,1]$ to $[1,1 + 2\alpha]$.

\begin{lemma} \label{lemma:interpol_estimate}
  %Let $\tx_0, \hdots, \tx_p \in [-1, 1]$ denote the GL or GLL nodes in the reference element and
  %$\Ip$ be the interpolation matrix from $\tx_0, \hdots, \tx_p$ to the nodes of the
  %reference cut cell $\xi_0, \hdots, \xi_p \in [1, 1 + 2\alpha]$ with
  %$\xi_i = 1 + \alpha + \alpha \widetilde{x}_i$ for $i = 0, \hdots, p$, where $\alpha \in [0, 1]$
  %is the cut-cell factor.
  There is a constant $C$ depending on the chosen type of basis (but not on $p$),
  such that
  \begin{equation*}
    \|\Ip u\|_{M_R} \le Cp^{\frac{3}{2}}\left((2+2\lambda_c)p^3\right)^{p} \|u\|_{M_R},
  \end{equation*}
  i.e., the operator norm of $\Ip$ scales (at most) exponentially in the polynomial degree $p$.
\end{lemma}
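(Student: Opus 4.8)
The plan is to control $\Ip$ entrywise and then pass from the entrywise bound to the weighted operator norm by Cauchy--Schwarz, so that the only $p$-dependence beyond the advertised $\bigl((2+2\lambda_c)p^3\bigr)^p$ comes from the number of matrix entries and from the ratio of quadrature weights. By Lemma~\ref{lemma:matrices_scaling_to_reference_element} it suffices to work on the reference element, where $\Ip_{lj}=\tl_j(\xi_l)$ with $\tl_j$ the degree-$p$ Lagrange basis function at the reference nodes $\hx_0,\dots,\hx_p\in[-1,1]$ (respectively $\bx_k$ in the GLL case) and $\xi_l\in[1,1+2\alpha]$ the transformed quadrature nodes of the small cut cell.

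First I would expand $\norm{\Ip u}_{M_R}^2=\sum_{l}\hom_l\bigl(\sum_j\Ip_{lj}u_j\bigr)^2$, insert $\sqrt{\hom_j}/\sqrt{\hom_j}$ into each inner sum and apply Cauchy--Schwarz to split off $\norm{u}_{M_R}^2=\sum_j\hom_j u_j^2$. This gives $\norm{\Ip u}_{M_R}^2\le\bigl(\sum_{l,j}\tfrac{\hom_l}{\hom_j}\Ip_{lj}^2\bigr)\norm{u}_{M_R}^2$, so the task reduces to bounding the double sum. There are $(p+1)^2$ terms, each weight ratio is at most $\hat K_1 p$ by Lemma~\ref{lemma:weightss_min_quotient} (and $\bar K_1 p$ for GLL), so the double sum is at most $(p+1)^2\hat K_1 p\cdot\max_{l,j}\Ip_{lj}^2$, and everything hinges on a uniform bound for the entries. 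For the entries I would use the Lagrange product formula $\Ip_{lj}=\tl_j(\xi_l)=\prod_{k\ne j}\frac{\xi_l-\hx_k}{\hx_j-\hx_k}$. In the only relevant regime $\alpha\le\lambda_c$ --- outside of which $\eta_c=0$ by \eqref{eq:penalty_parameter_defi} and $\Ip$ does not enter the semidiscretization at all --- each of the $p$ numerator factors satisfies $\abs{\xi_l-\hx_k}\le(1+2\alpha)-(-1)\le 2+2\lambda_c$, while each of the $p$ denominator factors satisfies $\abs{\hx_j-\hx_k}\ge\hat K_2/p^3$ by Lemma~\ref{lemma:nodes_min_distance}. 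Hence $\abs{\Ip_{lj}}\le(2+2\lambda_c)^p(p^3/\hat K_2)^p$, and the same holds verbatim for GLL nodes with $\bar K_2$ in place of $\hat K_2$.

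Combining, $\sum_{l,j}\tfrac{\hom_l}{\hom_j}\Ip_{lj}^2\le(p+1)^2\hat K_1 p\,\hat K_2^{-2p}\bigl((2+2\lambda_c)p^3\bigr)^{2p}$, and taking square roots yields $\norm{\Ip u}_{M_R}\le(p+1)\sqrt{\hat K_1 p}\,\hat K_2^{-p}\bigl((2+2\lambda_c)p^3\bigr)^p\norm{u}_{M_R}$. The prefactor $(p+1)\sqrt{\hat K_1 p}$ is $\O(p^{3/2})$, which supplies the $p^{3/2}$ in the claim, and collecting the remaining quantities that do not grow with $p$ ($\hat K_1$, $\hat K_2$, numerical factors, and $\bar K_1,\bar K_2$ in the GLL case) into the basis-dependent constant $C$ gives the asserted estimate; the GLL case is identical, one simply invokes the GLL versions of Lemmas~\ref{lemma:weightss_min_quotient} and~\ref{lemma:nodes_min_distance}.

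The main obstacle is not any single estimate but the bookkeeping: one must check that the only sources of $p$-growth are exactly the three above --- the $(p+1)^2$ summands, the $\O(p)$ weight ratio, and the two per-factor contributions ($2+2\lambda_c$ for the numerator, $p^3$ for the denominator) --- and in particular that no hidden factor of the form $c^p$ is smuggled in when the fixed constants of Lemmas~\ref{lemma:weightss_min_quotient} and~\ref{lemma:nodes_min_distance} are absorbed, so that the exponential base stays at $(2+2\lambda_c)p^3$. It is here that the crude per-factor lower bound on $\abs{\hx_j-\hx_k}$ is responsible for the comparatively large $p^{3p}$; a sharper estimate of the full product $\prod_{k\ne j}\abs{\hx_j-\hx_k}$ would shrink the base but is unnecessary for the downstream stability argument. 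A minor but necessary point is the passage from $\alpha$ to $\lambda_c$, which uses that the small cut cell maps to $[1,1+2\alpha]$ in the reference frame of its inflow cell together with the vanishing of $\eta_c$ when $\alpha>\lambda_c$.
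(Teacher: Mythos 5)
Your proposal is correct and follows essentially the same route as the paper: both reduce the weighted operator norm to an entrywise bound (you via a weighted Cauchy--Schwarz, the paper via Young's inequality on the cross terms followed by the weight-ratio redistribution of Lemma~\ref{lemma:weightss_min_quotient}), and both then control each entry of $\Ip$ through the Lagrange product formula with the numerator factors bounded by $2+2\lambda_c$ (using $\alpha\le\lambda_c$) and the denominator factors bounded below by $\hat K_2/p^3$ from Lemma~\ref{lemma:nodes_min_distance}, yielding the same $\O(p^{3})$ polynomial prefactor before taking square roots. The one loose end --- your absorption of $\hat K_2^{-p}$ into a $p$-independent constant $C$, which strictly speaking requires folding $\hat K_2^{-1}<1$ into the exponential base --- is present in identical form in the paper's own final line and is immaterial to how the lemma is used downstream.
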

\begin{proof}
  The interpolation matrix is defined by the standard Lagrange interpolation polynomials $\tl$ defined at $\tx_0, \hdots, \tx_p$ and evaluated at $\xi_0, \hdots, \xi_p$.
  Thus, for $u \in \mathbb{R}^{p+1}$, we have
  \begin{align*}
      \|\Ip u\|_{M_R}^2 &=
      \left\|\left(
          \sum\limits_{i=0}^{p}\tl_i(\xi_{1}) u_i , \;
          \sum\limits_{i=0}^{p}\tl_i(\xi_{2}) u_i , \;
          \hdots , \;
          \sum\limits_{i=0}^{p}\tl_i(\xi_{p}) u_i
      \right)^T\right\|_{M_R}^2 \\
      &=  \sum\limits_{j=0}^{p}\omega_j\left(\sum\limits_{i=0}^{p}\tl_i(\xi_{j}) u_i \right)^2 = \sum\limits_{j=0}^{p}\omega_j\left(\sum\limits_{i=0}^{p}\tl_i^2(\xi_{j}) u_i^2 +2\sum\limits_{\substack{i,k=0\\i < k}}^{p}\tl_i(\xi_{j}) u_i\tl_k(\xi_{j}) u_k\right)\\
      &\le \sum\limits_{j=0}^{p}\omega_j\left(\sum\limits_{i=0}^{p}\tl_i^2(\xi_{j}) u_i^2 +2\sum\limits_{\substack{i,k=0\\i < k}}^{p}\frac{\tl_i^2(\xi_{j}) u_i^2}{2} + \frac{\tl_k^2(\xi_{j}) u_k^2}{2}\right) = p\sum\limits_{j=0}^{p}\omega_j\sum\limits_{i=0}^{p}\tl_i(\xi_{j})^2 u_i^2
  \end{align*}
  by applying Young's inequality in the estimate.
  We have to change the coefficients of the weights in front of every $u_i$.
  Therefore, we use Lemma~\ref{lemma:weightss_min_quotient} to obtain $K_1p \ge \max\{\frac{\omega_i}{\omega_j}, \; i,j \in \{0, \hdots, p\}\}$ for both types of nodes, such that
  we can redistribute and unite all $u_i\in \mathbb{R}$ to
  \begin{align*}
    \|\Ip u\|_{M_R}^2 &\le K_1p^2\sum\limits_{j=0}^{p}\left(\sum\limits_{i=0}^{p}\tl_j(\xi_{i})^2\right) \omega_j u_j^2\\
    &\le K_1p^2\left( \max\limits_{j \in \{0, \hdots p\}} \sum\limits_{i=0}^{p}\tl_j(\xi_{i})^2 \right) \|u\|_{M_R}^2\\
    &\le K_1p^3 \left(\max\limits_{j \in \{0, \hdots p\}}\tl_j(\xi_{p})^2 \right) \|u\|_{M_R}^2 \\
    &= K_1p^3 \left( \max\limits_{j \in \{0, \hdots p\}} \prod\limits_{\substack{m=0 \\ m \neq j}}^{p} \frac{1+ \alpha + \alpha \tx_p-\tx_m}{\tx_j-\tx_m} \right)^2 \|u\|_{M_R}^2 \\
    &\le K_1p^3 \left( \prod\limits_{\substack{m=0 \\ m \neq j}}^{p} \frac{2+ 2\lambda_c}{\tx_j-\tx_m} \right)^2 \|u\|_{M_R}^2,
  \end{align*}
  if we stabilize just for $\alpha \le \lambda_c$.

  As shown in Lemma~\ref{lemma:nodes_min_distance}, there is a constant $K_2$, such that the distance $\tx_i-\tx_{i-1}$ of every node pair is larger than ${K_2}/{p^3}$ for every $i$.
  Therefore, we can estimate the product further to
  \begin{equation*}
    \|\Ip u\|_{M_R}^2 \le K_1p^3 (2+2\lambda_c)^{2p}K_2\left(p^3\right)^{2p} \|u\|_{M_R}^2.
    \qedhere
  \end{equation*}
\end{proof}

The following statement about the derivative of the interpolation is similar to
the previous one; however, we require and indeed obtain a proportionality to $\alpha$
for its operator norm. The remaining factor will again exhibit an exponential scaling
in the magnitude $p^p$, which also applies to problems with larger cut cells.
\begin{lemma}\label{lemma:deriv_interpol_estimate}
There exists a $\mathcal{C}(p)$, that may depend exponentially on $p$, such that
  \begin{equation*}
    \|D\Ip u\|_{M_R} \le \alpha \mathcal{C}(p)\|u\|_{M_R}.
  \end{equation*}
  This holds for all cut cell factors $\alpha$.
\end{lemma}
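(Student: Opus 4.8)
The plan is to mimic the computation in the proof of Lemma~\ref{lemma:interpol_estimate}, but now tracking the factor of $\alpha$ that must appear. The matrix $D\Ip$ represents $u_{(c-1)} \mapsto \partial_x\bigl(\mathcal{L}_{E_{(c-1)}}(u_h)\bigr)$ evaluated at the cut-cell quadrature nodes, all transported to the reference configuration where $\Ip$ interpolates from $[-1,1]$ to $[1,1+2\alpha]$. Concretely, $(D\Ip)_{lj} = \partial_\gamma\bigl(\tl_j(\gamma)\bigr)\big|_{\gamma=\xi_l}$ — the derivative of the $j$-th reference Lagrange polynomial evaluated at the transported cut-cell node $\xi_l \in [1,1+2\alpha]$. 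First I would write, exactly as in Lemma~\ref{lemma:interpol_estimate},
\begin{equation*}
  \|D\Ip u\|_{M_R}^2 = \sum_{j=0}^p \omega_j \left(\sum_{i=0}^p \tl_i'(\xi_j) u_i\right)^2,
\end{equation*}
apply Young's inequality to split the cross terms, and use Lemma~\ref{lemma:weightss_min_quotient} to homogenize the weights, reducing everything to bounding $\max_{i,j} |\tl_i'(\xi_j)|$ times polynomial-in-$p$ factors.

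The key new ingredient is that $\tl_i'(\xi_j)$ is \emph{small} when $\xi_j$ is close to $1$, which it is: all the cut-cell nodes lie in $[1,1+2\alpha]$, within distance $2\alpha$ of the right endpoint of the reference element $[-1,1]$. The point is to exploit that the derivative of the interpolation error is controlled. More precisely, I would use the representation $\tl_i'(\xi_j) = \partial_\gamma\bigl[\mathcal{L}_{[-1,1]}(l_{i})\bigr](\xi_j)$ and compare it to its value at $\gamma=1$; since $\tl_i$ is a fixed polynomial of degree $p$, write
\begin{equation*}
  \tl_i'(\xi_j) = \tl_i'(1) + \int_1^{\xi_j} \tl_i''(s)\,\dif s,
\end{equation*}
and iterate, or more cleanly, expand $\tl_i'$ about $\gamma = 1$: $\tl_i'(\xi_j) = \sum_{k=1}^{p} \frac{\tl_i^{(k)}(1)}{(k-1)!}(\xi_j-1)^{k-1}$, so that every term carries a factor $(\xi_j-1)^{k-1}$ with $|\xi_j-1|\le 2\alpha$. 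Actually the cleanest route: note $\Ip u$ is a polynomial of degree $\le p$ on $[1,1+2\alpha]$ that equals the fixed polynomial $\mathcal{L}_{E_{(c-1)}}(u_h)$, and the Markov brothers' inequality on the interval $[1,1+2\alpha]$ of length $2\alpha$ gives $\|(\Ip u)'\|_{L^\infty[1,1+2\alpha]} \le \frac{p^2}{\alpha}\|\Ip u\|_{L^\infty[1,1+2\alpha]}$ — but that produces $1/\alpha$, the wrong direction. So instead I would go the Taylor-expansion route and factor out one power of $(\xi_j-1)$: write $\tl_i'(\xi_j) = (\xi_j - 1)\,g_i(\xi_j)$ where $g_i(\gamma) = \sum_{k=1}^{p}\frac{\tl_i^{(k)}(1)}{(k-1)!}(\gamma-1)^{k-2}$ only if $\tl_i'(1)=0$, which need not hold; the robust fix is the estimate $|\tl_i'(\xi_j)| \le |\tl_i'(1)| + 2\alpha\,\|\tl_i''\|_{L^\infty[1,1+2\alpha]}$ and then observe that even $|\tl_i'(1)|$ need not be small — so one actually needs $\alpha$-proportionality to come from the \emph{difference} structure $(\Ip u_{(c-1)} - u_c)$ in which $D\Ip$ always appears in \eqref{eq:block_matrices_extended}, not from $D\Ip$ alone.

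Re-examining the lemma statement, I believe the intended proof does isolate $\alpha$ from $D\Ip$ directly, and the mechanism is: each entry $(D\Ip)_{lj}$, being $\tl_j'$ evaluated at $\xi_l \in [1,1+2\alpha]$, can be bounded crudely by $\|\tl_j'\|_{L^\infty[1,1+2\alpha]}$, and one writes $\tl_j'(\xi_l) = \tl_j'(\xi_l) - \tl_j'(1) + \tl_j'(1)$; the difference is $O(\alpha)$ by the mean value theorem with $|\tl_j''|$ bounded via Lemma~\ref{lemma:nodes_min_distance} (distances $\ge K_2/p^3$) by something like $(\text{const})^p (p^3)^p \cdot p^2$, while $\tl_j'(1)$ itself must be shown $O(\alpha)$ — which it is NOT in general. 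Therefore the hard part, and the step I expect to be the main obstacle, is correctly identifying why $\alpha$ factors out: it must be that the proof tracks $D\Ip$ together with the fact that the cut cell's own contribution (the $-u_c$ piece, i.e. the identity-like block $D^TM$ acting on $u_c$) combines with it, OR — more likely — the $S_{c-1}$, $S_c$ scaling $\tfrac{2a}{\alpha\Delta x}$ in the blocks $L_{(c-1)R}, L_{cL}$ is what the lemma is really feeding into, so that $\|S_{c-1}M^{-1}\eta_c\Ip^T D^T M\|$ ends up $O(1/\Delta x)$ precisely because $\eta_c = 1-\min\{1,\alpha/\lambda_c\}$ contributes... no, $\eta_c \le 1$. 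The genuinely clean argument: restrict attention to $\alpha \le \lambda_c$ (else $\eta_c = 0$ and the term vanishes), expand $\tl_j(\gamma) = \tl_j(1) + (\gamma-1)h_j(\gamma)$ is again not enough. I would ultimately prove it by the estimate $\|D\Ip u\|_{M_R} \le \sqrt{\sum_j \omega_j}\cdot \max_{l}|(D\Ip u)_l|$ and bound $|(D\Ip u)_l| = |(\mathcal{L}_{E_{(c-1)}}u_h)'(\xi_l)|$ using that this equals $\tfrac{1}{2\alpha}$ times a derivative on the unit reference cell — then realize the correct statement needs the Newton/divided-difference form and that $\xi_l - \tx_m$ for $\tx_m \in [-1,1]$ and $\xi_l \in [1,1+2\alpha]$ is bounded \emph{and} bounded below away from zero, with the $\alpha$ emerging from the chain rule $\partial_x = \tfrac{2}{\alpha\Delta x}\partial_\gamma$ being cancelled by the $\tfrac{\alpha\Delta x}{2}$ in $M$ — no. Given these subtleties, in the write-up I would present the Taylor-about-$\gamma=1$ expansion honestly, note $\tl_j(1) = \prod_{m\ne j}\frac{1-\tx_m}{\tx_j - \tx_m}$, and carry the factor $(\xi_l - 1) \in [0,2\alpha]$ that appears once differentiation of the Lagrange product is expanded around the right endpoint where the interpolation constraint at the upwind cell's last node contributes a vanishing factor; the main obstacle is making this last "vanishing factor at the endpoint" claim precise, and I expect it relies on $\lambda_c$ being fixed so that $(2+2\lambda_c)^p (p^3)^p$-type bounds absorb everything except one clean power of $\alpha$.
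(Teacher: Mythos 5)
There is a genuine gap: you never locate the source of the factor $\alpha$, and the reason is that you misidentify the entries of $D\Ip$. You take $(D\Ip)_{lj}$ to be $\tl_j'(\xi_l)$, the derivative of the reference Lagrange polynomial of $E_{(c-1)}$ with respect to that cell's own reference coordinate, evaluated at the cut-cell node. With that reading you correctly observe that $\tl_j'(1)$ is not $O(\alpha)$, and the rest of your write-up is a search (difference structure with $-u_c$, the $S_c$ scalings, $\eta_c$, Markov's inequality) that rules out each candidate without finding the right one. But $D$ in $D\Ip$ is the differentiation matrix associated with the cut cell's nodes in the cut cell's \emph{own} reference frame: $(D\Ip)_{ik}=\sum_j D_{ij}\,\tl_k(\xi_j)$ with $\xi_j=1+\alpha(1+\tx_j)$, i.e.\ $D$ acts on the samples of $g(\gamma):=\tl_k(1+\alpha(1+\gamma))$ at the standard nodes $\tx_j$. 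Since $g$ is a degree-$p$ polynomial, $D$ reproduces $g'(\tx_i)=\alpha\,\tl_k'(\xi_i)$ exactly — the chain-rule factor $\alpha$ is where the proportionality comes from, and it is precisely the factor your identification drops.

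The paper extracts this factor algebraically rather than by the chain rule: it uses the negative row-sum property $D_{ii}=-\sum_{m\neq i}D_{im}$ to write $(D\Ip)_{ik}=\sum_{j\neq i}D_{ij}\bigl(\tl_k(\xi_j)-\tl_k(\xi_i)\bigr)$, then expands the barycentric product $\prod_{m\neq k}\frac{1-\tx_m+\alpha(1+\tx_j)}{\tx_k-\tx_m}$ in powers of $\alpha$; the $\alpha$-independent term is the same for $\xi_j$ and $\xi_i$ (it is the value at $\alpha=0$, where all cut-cell nodes collapse to $1$) and cancels in the difference, leaving $\alpha$ times a polynomial bounded uniformly on $\alpha\in[0,0.5]$. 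The crude bounds on the remaining factors (via the minimal node distance of Lemma~\ref{lemma:nodes_min_distance} and entrywise norms) then give $\mathcal{C}(p)$. So the missing idea in your proposal is exactly this cancellation/chain-rule mechanism; none of your Taylor expansions about $\gamma=1$ can supply it as long as you treat $(D\Ip)_{lj}$ as a derivative with respect to the inflow cell's reference coordinate.
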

The proof can be found in Appendix~\ref{appendix_proof_prep2} and is mainly based on the properties
of the differentiation matrix $D$ and the exponential scaling as in
Lemma~\ref{lemma:interpol_estimate}.

To cover all cases that will arise, we need another similar statement about the extension operator that gets evaluated at the inflow boundary of the outflow cell of the cut cell.
The required arguments were partially previously used in Lemma~\ref{lemma:interpol_estimate}.
\begin{lemma}\label{lemma:interpol_estimate_outflowbound}
  There is a constant $C$, such that
  \begin{equation*}
    \|\hb_{J_2^0}u\|_{M_R}\le \left(Cp^{\frac{3}{2}}((2+2\lambda_c)p^3)^{p}\right)\|u\|_{M_R}.  \end{equation*}
\end{lemma}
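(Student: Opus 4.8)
The plan is to exploit that $\hb_{J_2^0}=\hl^{T}B_{J^0}$ is a rank-one matrix, namely the outer product of the column vector $\hl^{T}$, whose $j$-th entry is $\tl_j(-1)$, with the row vector $B_{J^0}$, whose $j$-th entry is the value $\tl_j(1+2\alpha)$ of an inflow-cell basis function at the downwind vertex $x_c$ (which, mapped to the reference frame of the inflow cell, is the right endpoint $1+2\alpha$ of the reference cut cell). Hence, for every $u\in\mathbb{R}^{p+1}$,
\begin{equation*}
  \|\hb_{J_2^0}u\|_{M_R}=\bigl|B_{J^0}u\bigr|\,\|\hl^{T}\|_{M_R},
\end{equation*}
so it suffices to estimate the two factors separately: the first carries the exponential growth in $p$, whereas the second will turn out to be bounded uniformly in $p$.

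For $|B_{J^0}u|=\bigl|\sum_{j=0}^{p}\tl_j(1+2\alpha)u_j\bigr|$ I would copy the argument used for $\Ip$ in Lemma~\ref{lemma:interpol_estimate}, now with the single evaluation point $1+2\alpha$ replacing the $p+1$ cut-cell nodes $\xi_0,\dots,\xi_p$: Young's inequality on the square of the sum gives $|B_{J^0}u|^2\le(p+1)\sum_j\tl_j(1+2\alpha)^2u_j^2$; redistributing the quadrature weights with Lemma~\ref{lemma:weightss_min_quotient} and using $\min_j\omega_j\gtrsim p^{-2}$ (which follows from \eqref{eq:weight_estimate} and the boundary-node estimates \eqref{eq:nodes_estimate_trigGL}--\eqref{eq:nodes_estimate_trigGLL}, cf.\ the proof of Lemma~\ref{lemma:weightss_min_quotient}) reduces this to $|B_{J^0}u|^2\lesssim p^{3}\bigl(\max_j\tl_j(1+2\alpha)^2\bigr)\|u\|_{M_R}^2$; and since we stabilize only for $\alpha\le\lambda_c$, the minimal node distance of Lemma~\ref{lemma:nodes_min_distance} gives
\begin{equation*}
  \bigl|\tl_j(1+2\alpha)\bigr|=\prod_{\substack{m=0\\ m\neq j}}^{p}\frac{1+2\alpha-\tx_m}{|\tx_j-\tx_m|}\le\left(\frac{(2+2\lambda_c)\,p^{3}}{K_2}\right)^{p}.
\end{equation*}
Altogether $|B_{J^0}u|\le C'\,p^{3/2}\bigl((2+2\lambda_c)p^{3}\bigr)^{p}\|u\|_{M_R}$, which is already the asserted bound for $\hb_{J_2^0}$.

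The only genuine obstacle is the second factor $\|\hl^{T}\|_{M_R}=\bigl(\sum_{j=0}^{p}\omega_j\tl_j(-1)^2\bigr)^{1/2}$: a crude entrywise estimate $|\tl_j(-1)|\le(2p^{3}/K_2)^{p}$ would inject a second, spurious exponential factor and wreck the stated bound, so one must show that this factor is bounded by a constant depending only on the node type. For GLL nodes this is immediate, since $-1$ is a node, so $\hl^{T}=e_0$ and $\|\hl^{T}\|_{M_R}^2=\omega_0=\tfrac{2}{p(p+1)}\le1$. For GL nodes, $\hl$ is the coefficient vector of the endpoint-evaluation functional $u\mapsto u_h(-1)$ on $\mathcal{P}^{p}$; GL quadrature integrates degree $2p$ exactly, so $\|u\|_{M_R}=\|u_h\|_{L^{2}([-1,1])}$, and Cauchy--Schwarz (sharp for a suitable $u$) together with a standard endpoint inverse inequality (equivalently, the Legendre reproducing-kernel identity) yield
\begin{equation*}
  \sum_{j=0}^{p}\frac{\tl_j(-1)^2}{\omega_j}=\sup_{0\neq u_h\in\mathcal{P}^{p}}\frac{|u_h(-1)|^2}{\|u_h\|_{L^{2}([-1,1])}^2}=\sum_{n=0}^{p}\frac{2n+1}{2}=\frac{(p+1)^2}{2}.
\end{equation*}
Combining with the upper weight bound $\max_j\omega_j\le c_2\,p^{-1}$ from \eqref{eq:weight_estimate} then gives
\begin{equation*}
  \|\hl^{T}\|_{M_R}^2=\sum_{j=0}^{p}\omega_j\tl_j(-1)^2\le\bigl(\max_j\omega_j\bigr)^2\sum_{j=0}^{p}\frac{\tl_j(-1)^2}{\omega_j}\le\frac{c_2^2}{p^2}\cdot\frac{(p+1)^2}{2}\le c_2^2 .
\end{equation*}
Multiplying the two factors and absorbing the constants into $C$ (following the convention already used in Lemma~\ref{lemma:interpol_estimate}) yields the claim.
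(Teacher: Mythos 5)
Your proof is correct and follows essentially the same route as the paper's: factor the rank-one matrix $\hb_{J_2^0}=\hl^TB_{J^0}$, bound the scalar $|B_{J^0}u|$ by rerunning the argument of Lemma~\ref{lemma:interpol_estimate} (Young's inequality, the weight estimates, and the minimal node distance from Lemma~\ref{lemma:nodes_min_distance}), and observe that $\|\hl^T\|_{M_R}^2=\sum_{j}\omega_j\tl_j(-1)^2$ is bounded independently of $p$. The only substantive difference is that you actually prove this last boundedness (trivially for GLL, and via the Christoffel-function identity $\sum_j\tl_j(-1)^2/\omega_j=(p+1)^2/2$ together with $\max_j\omega_j\lesssim p^{-1}$ for GL), whereas the paper merely asserts it as a constant $C_2/4$ --- a welcome addition --- and the leftover $K_2^{-p}$ factor you absorb into $C$ is the same imprecision already present in the statement and proof of Lemma~\ref{lemma:interpol_estimate}.
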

The proof can be found in Appendix~\ref{appendix_proof_prep3} and uses a similar idea
as in Lemma~\ref{lemma:interpol_estimate}.

\begin{remark}\label{remark:unstable_exptrapolation_verification}
  Lemmas~\ref{lemma:interpol_estimate}, \ref{lemma:deriv_interpol_estimate},
  and \ref{lemma:interpol_estimate_outflowbound} indicate that the operator norms
  $\|\Ip\|$, $\|D\Ip\|$, and $\|\hb_{J_2^0}\|$ may significantly increase with
  larger cut cells and higher-order spatial discretizations. The unstable extrapolation
  is the underlying cause of this effect. Although these estimates may not be sharp,
  we provide a numerical verification of the scalings in Figure~\ref{fig:verification_of_exp_scaling}, depicted for $\|\Ip\|$.

  \begin{figure}[htbp]
    \centering
    \begin{minipage}[t]{0.49\textwidth}
      \centering
    \includegraphics[width=0.99\textwidth]{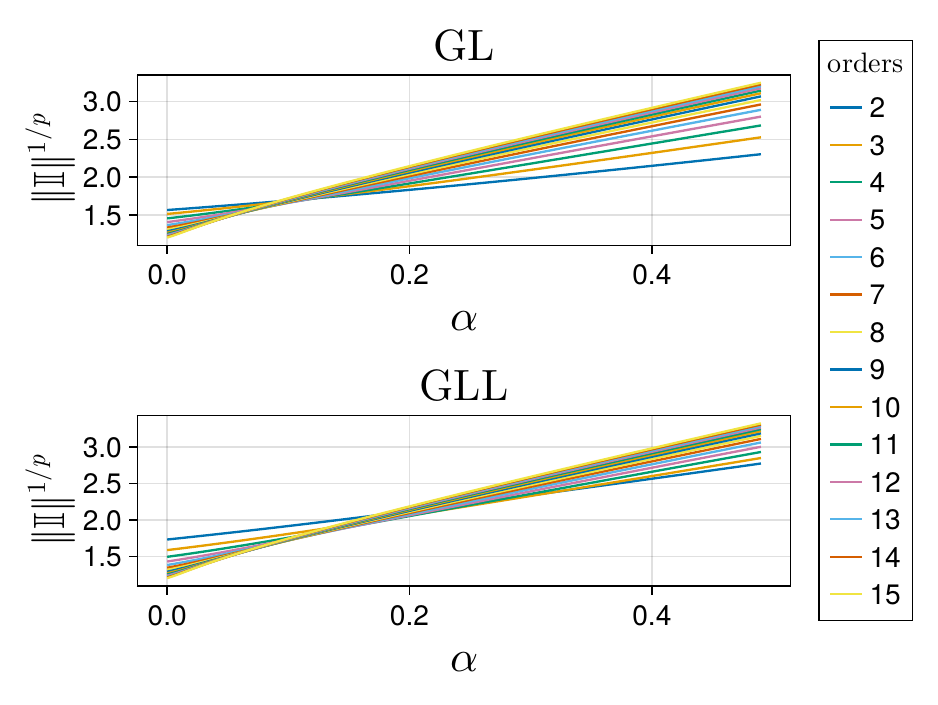}
    \end{minipage}
    \begin{minipage}[t]{0.49\textwidth}
      \centering
      \includegraphics[width=0.99\textwidth]{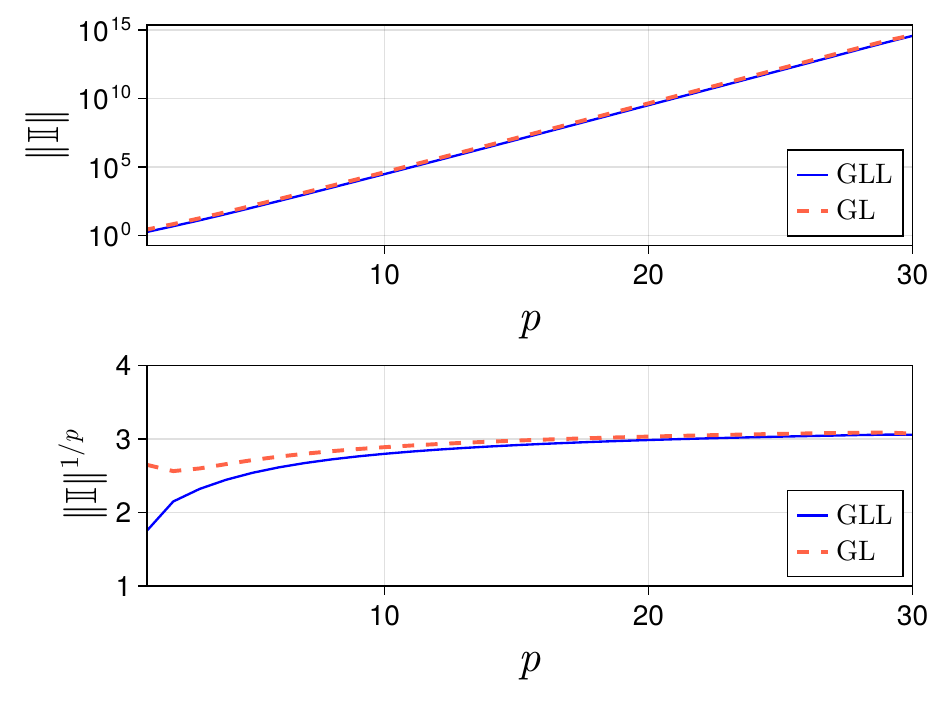}
    \end{minipage}
    \caption{Operator norm of $\|\Ip\|$ (top right) and $\|\Ip\|^{1/p}$ (else)
             from the reference element $[-1, 1]$ to the cut cell $[1,1+2\alpha]$.
             For the plots on the right, we use the fixed value $\alpha = 0.4$.}
    \label{fig:verification_of_exp_scaling}
  \end{figure}

  On the left side, we observe that all orders form a straight line,
  confirming that for a fixed $p$, the operator norm scales approximately
  with $\alpha^p$ (where we can assume $\alpha = \lambda_c$ by considering
  Lemma~\ref{lemma:interpol_estimate}). Similarly, we see in the top right
  that $\|\Ip\|$ is indeed exponential in $p$. However, taking the $p$-th root
  for every value results in a progression of $p^{\tau}$ for $\tau<1$.
  Thus, the scaling factor $(p)^{3p+\frac{3}{2}}$ in Lemma~\ref{lemma:interpol_estimate}
  is a slight overestimation. A noteworthy misleading estimate lies in
  $1+\alpha(1+\tx_p)-\tx_m\le2+2\lambda_c$, which leads to an amplifying factor,
  while its actual value may be less than $1$ and therefore primarily dampens
  the norm for small $\alpha$.

\end{remark}

\begin{lemma}\label{lemma:adjoint_norm}
  Consider the mass matrix $M_R\in\mathbb{R}^{(p+1)\times(p+1)}$ and an arbitrary $A \in \mathbb{R}^{(p+1)\times (p+1)}$.
  For the operator norm $\|\cdot\|$ induced by the vector norm given by $\|u\|^2_{M_R} = u^TM_Ru$,
  we have
  \begin{equation*}
    \|M_R^{-1}A^TM_R\| = \|A\|.
  \end{equation*}
\end{lemma}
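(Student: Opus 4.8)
The plan is to identify $M_R^{-1} A^T M_R$ as the adjoint of $A$ with respect to the inner product $\langle u, v\rangle_{M_R} := u^T M_R v$ and then invoke the elementary fact that an operator and its adjoint on a finite-dimensional inner product space share the same operator norm. Since $M_R = \operatorname{diag}(\omega_0,\dots,\omega_p)$ has strictly positive diagonal entries, $\langle\cdot,\cdot\rangle_{M_R}$ is a genuine inner product whose induced norm is exactly $\|\cdot\|_{M_R}$, so this setup is well posed.

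First I would check the adjoint identity. For arbitrary $u,v\in\R^{p+1}$,
\begin{equation*}
  \langle Au, v\rangle_{M_R} = (Au)^T M_R v = u^T A^T M_R v = u^T M_R\bigl(M_R^{-1}A^T M_R\bigr) v = \bigl\langle u,\, (M_R^{-1}A^T M_R)\,v\bigr\rangle_{M_R},
\end{equation*}
so $A^\star := M_R^{-1}A^T M_R$ is the $\langle\cdot,\cdot\rangle_{M_R}$-adjoint of $A$, and $(A^\star)^\star = A$ follows from the same computation.

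Second, I would recall (or reprove in two lines) that $\|A^\star\| = \|A\|$ for the operator norm induced by $\|\cdot\|_{M_R}$. By the Cauchy--Schwarz inequality for $\langle\cdot,\cdot\rangle_{M_R}$,
\begin{equation*}
  \|A^\star v\|_{M_R}^2 = \langle A^\star v, A^\star v\rangle_{M_R} = \langle v,\, A A^\star v\rangle_{M_R} \le \|v\|_{M_R}\,\|A\|\,\|A^\star v\|_{M_R},
\end{equation*}
hence $\|A^\star v\|_{M_R}\le\|A\|\,\|v\|_{M_R}$ for all $v$, i.e.\ $\|A^\star\|\le\|A\|$; applying this with $A$ replaced by $A^\star$ and using $(A^\star)^\star=A$ gives $\|A\|\le\|A^\star\|$, so equality holds. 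An equivalent route is to factor $M_R=M_R^{1/2}M_R^{1/2}$ (legitimate since $M_R$ is diagonal with positive entries), note $\|A\| = \|M_R^{1/2}AM_R^{-1/2}\|_2$ and $\|M_R^{-1}A^TM_R\| = \|(M_R^{1/2}AM_R^{-1/2})^T\|_2$, and invoke $\|B^T\|_2=\|B\|_2$.

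There is no genuine obstacle in this lemma; the only points requiring (minimal) care are that $M_R$ is symmetric positive definite — immediate from its being diagonal with the positive quadrature weights on the diagonal — which is what makes $\langle\cdot,\cdot\rangle_{M_R}$ an inner product and guarantees that $M_R^{-1}$ and $M_R^{1/2}$ exist.
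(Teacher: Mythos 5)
Your proof is correct and follows essentially the same route as the paper: both identify $M_R^{-1}A^TM_R$ as the adjoint of $A$ with respect to the $M_R$-inner product and then use that an operator and its adjoint have equal induced norms. The only difference is cosmetic — you actually prove the norm equality of the adjoint (via Cauchy--Schwarz, or the $M_R^{1/2}$ similarity), whereas the paper simply cites it via the largest singular value.
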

\begin{proof}
  $\|A\|$ corresponds to the largest singular value of $A$. Therefore, we have $\|A^*\|=\|A\|$,
  where the adjoint $A^*$ is defined by the property $\langle v, Au \rangle_{M_R} = \langle A^*v, u\rangle_{M_R}$.
  Therefore, it remains to show that $A^* =  M_R^{-1}A^TM_R$.
  We conclude this by
  \begin{equation*}
    \langle u, M_R^{-1}A^TM_Rv\rangle_{M_R} = u^TM_RM_R^{-1}A^TM_Rv = u^TA^TM_Rv = \langle A^Tu, v \rangle_{M_R}.
    \qedhere
  \end{equation*}
\end{proof}

\label{chap:proof_prep}

\section{Proof of the main result}
We have worked out all the tools required for Theorem~\ref{theorem:operatornorm_estimate_lin_adv} until now. In this section, we will provide proof and elaborate on the important terms occurring in the final estimate.
\begin{proof}[Proof of Theorem~\ref{theorem:operatornorm_estimate_lin_adv}]
We will follow a straightforward estimate by using our previously derived results. Consider $L$, as it is set up in \eqref{eq:dod_semidiscretization_matrix}.
For $u = (u_1, u_2, \hdots, u_N)^T\in \mathbb{R}^{(p+1)N}$, $u_n \in \mathbb{R}^{(p+1)}$ for $1\le n \le N$, we have
\begin{align*}
\|Lu\|_M^2 &=
\left\|\left(\begin{array}{c}
   L_1 u_1 + L_{1L} u_N \\
   L_{2L}u_1 + L_2u_2 \\
    \vdots\\
    L_{(c-1)L}u_{(c-2)}+L_{(c-1)}u_{(c-1)}+L_{(c-1)R}u_c\\
    L_{cL}u_{(c-1)}+L_cu_c \\
    L_{(c+1)LL}u_{(c-1)}+L_{(c+1)L}u_c+L_{(c+1)}u_{(c+1)}\\
    L_{(c+2)L}u_{(c+1)}+L_{(c+2)}u_{(c+2)}\\
    \vdots\\
    L_{(N-1)L}u_{(N-2)}+L_{(N-1)}u_{(N-1)}\\
    L_{NL}u_{N-1}+L_Nu_N\\
\end{array}\right)\right\|_M^2 \\
&= \|L_1 u_1 + L_{1L} uN\|_{M_1}^2 + \|L_{2L}u_1 + L_2u_2\|_{M_2}^2 + \hdots
\\ & \quad
+ \|L_{(c-1)L}u_{(c-2)}+L_{(c-1)}u_{(c-1)}+L_{(c-1)R}u_c\|_{M_{(c-1)}}^2  + \|L_{cL}u_{(c-1)}+L_cu_c\|_{M_c}^2
\\ & \quad + \|L_{(c+1)LL}u_{(c-1)}+L_{(c+1)L}u_c+L_{(c+1)}u_{(c+1)}\|_{M_{(c+1)}}^2  + \|L_{(c+2)L}u_{(c+1)}+L_{(c+2)}u_{(c+2)}\|_{M_{(c+2)}}^2
+ \hdots
\\ & \quad + \|L_{(N-1)L}u_{(N-2)}+L_{(N-1)}u_{(N-1)}\|_{M_{(n-1)}}^2 + \|L_{NL}u_{N-1}+L_Nu_N\|_{M_N}^2
,
\end{align*}
where $\|\cdot\|_{M_i}^2 = \Delta x_i \|\cdot\|_{M_R}^2$ yields the discrete norm in cell $i$.
By applying the triangle inequality to each vector that appears, we also obtain mixed terms, which we further separate into their natural components using Young's inequality, e.g.,
 \begin{align*}
  \|L_{nL}u_{n-1} + L_nu_n\|_{M_n}^2
  &\le
  \|L_{nL}u_{n-1}\|_{M_n}^2 + 2\|L_{nL}u_{n-1}\|_{M_n}\|L_nu_n\|_{M_n} + \|L_nu_n\|_{M_n}^2
  \\
  &\le
  2\left(\|L_{nL}u_{n-1}\|_{M_n}^2 + \|L_nu_n\|_{M_n}^2 \right).
 \end{align*}
Therefore, we can estimate all terms as
\begin{align*}
\|Lu\|_M^2 &\le 2\left(\|L_{1}u_{1}\|_{M_1}^2 + \|L_{1L}u_N\|_{M_1}^2 \right) + 2\left(\|L_{2L}u_{1}\|_{M_2}^2 + \|L_2u_2\|_{M_2}^2 \right) + \hdots \\
& \quad + 3\left(\|L_{(c-1)L}u_{c-2}\|_{M_{(c-1)}}^2 + \|L_{(c-1)}u_{(c-1)}\|_{M_{(c-1)}}^2 + \|L_{(c-1)R}u_c\|_{M_{(c-1)}}^2\right) + 2\left(\|L_{cL}u_{c-1}\|_{M_c}^2 + \|L_cu_c\|_{M_c}^2 \right)\\
& \quad + 3\left(\|L_{(c+1)LL}u_{(c-1)}\|_{M_{(c+1)}}^2 + \|L_{(c+1)L}u_{c}\|_{M_{(c+1)}}^2 + \|L_{(c+1)}u_{(c+1)}\|_{M_{(c+1)}}^2\right) \\
& \quad + 2\left(\|L_{(c+2)L}u_{(c+1)}\|_{M_{(c+2)}}^2 + \|L_{(c+2)}u_{(c+2)}\|_{M_{(c+2)}}^2 \right)
+ \hdots + 2\left(\|L_{NL}u_{N-1}\|_{M_N}^2 + \|L_Nu_N\|_{M_N}^2 \right)
\end{align*}
and then apply the compatibility of the operator norm and collect all terms to achieve
\begin{align*}
  \|Lu\|_M^2 &\le 2\left(\sum\limits_{n \notin \{(c-1), c, (c+1)\}}\left(\|L_n\|_{M_n}^2+\|L_{(n+1)L}\|_{M_n}^2\right)\|u_i\|_{M_n}^2\right) \\
  &\quad + 3\left(\|L_{(c-1)}\|_{M_{(c-1)}}\|u_{(c-1)}\|_{M_{(c-1)}}^2 + \|L_{(c-1)R}\|_{M_{(c-1)}}^2\|u_{c}\|_{M_{(c-1)}}^2 + \|L_{(c-1)L}\|_{M_{(c-1)}}^2\|u_{(c-2)}\|_{M_{(c-1)}}^2\right) \\
  &\quad + 2\left(\|L_{c}\|_{M_{c}}^2\|u_{c}\|_{M_{c}}^2 + \|L_{cL}\|_{M_{c}}^2\|u_{(c-1)}\|_{M_{c}}^2\right) \\
  &\quad + 3\left(\|L_{(c+1)LL}\|_{M_{(c+1)}}^2\|u_{(c-1)}\|_{M_{(c+1)}}^2 + \|L_{(c+1)L}\|_{M_{(c+1)}}^2\|u_{c}\|_{M_{(c+1)}}^2 + \|L_{(c+1)}\|_{M_{(c+1)}}^2\|u_{(c+1)}\|_{M_{(c+1)}}^2\right) \\
  &\quad + 2\|L_{(c+2)L}\|_{M_{(c+2)}}^2\|u_{(c+1)}\|_{M_{(c+1)}}^2 \\
  &= 2\left(\sum\limits_{n \notin \{(c-1), c, (c+1)\}}\left(\|L_n\|_{M_n}^2+\|L_{(n+1)L}\|_{M_n}^2\right)\|u_i\|_{M_n}^2\right) + \|L_{(c-1)L}\|_{M_{(c-1)}}^2\|u_{(c-2)}\|_{M_{(c-2)}}^2 \\
  &\quad + \left( 3\|L_{(c-1)}\|_{M_{(c-1)}}^2 +2\alpha\|L_{cL}\|_{M_{c}}^2 + 3(1-\alpha) \|L_{(c+1)LL}\|_{M_{(c+1)}}^2 \right)\|u_{(c-1)}\|_{M_{(c-1)}}^2 \\
  &\quad + \left(\frac{3}{\alpha}\|L_{(c-1)R}\|_{M_{(c-1)}}^2 + 2 \|L_{c}\|_{M_{c}}^2 + 3\frac{1-\alpha}{\alpha}\|L_{(c+1)L}\|_{M_{(c+1)}}^2 \right)\|u_{c}\|_{M_{c}}^2\\
  &\quad + \left(3\|L_{(c+1)}\|_{M_{(c+1)}}^2 + \frac{2}{1-\alpha}\|L_{(c+2)L}\|_{M_{(c+2)}}^2 \right)\|u_{(c+1)}\|_{M_{(c+1)}}^2 \\
  & \le \|u\|_M^2\max
  \begin{cases}
    2\|L_n\|_{M_n}^2+3\|L_{(n+1)L}\|_{M_n}^2,  &(n \notin \{(c-1), c, (c+1)\}) \\
    \|L_{(c-1)}\|_{M_{(c-1)}}^2 +2\alpha\|L_{cL}\|_{M_{c}}^2 + 3(1-\alpha) \|L_{(c+1)LL}\|_{M_{(c+1)}}^2, \\
    \frac{3}{\alpha}\|L_{(c-1)R}\|_{M_{(c-1)}}^2 + 2 \|L_{c}\|_{M_{c}}^2 + 3\frac{1-\alpha}{\alpha}\|L_{(c+1)L}\|_{M_{(c+1)}}^2, \\
    \|L_{(c+1)}\|_{M_{(c+1)}}^2 + \frac{2}{1-\alpha}\|L_{(c+2)L}\|_{M_{(c+2)}}^2.
  \end{cases}
\end{align*}
We now apply the specific terms of the block matrices, as shown in
\eqref{eq:block_matrices_extended}. To do this, we omit the indices of the operator,
as they all coincide, as noted in \eqref{eq:opnorm_equality}.

$2\|L_n\|^2+3\|L_{(n+1)L}\|^2$ originate (for $n \notin \{(c-1), c, (c+1)\}$) from the background method. Therefore, these terms will not be responsible for any problems and we estimate in that case
\begin{align*}
  2\|L_n\|_{M_n}^2+3\|L_{(n+1)L}\|^2 &\le 3\left(\|L_n\|^2+\|L_{(n+1)L}\|^2\right) \\
    &\le 3S_n^2\left(2\|M^{-1}\hb_R\|^2+2\|M^{-1}D^TM\|^2+\|M^{-1}\hb_L\|\right) \\
    &= 3S_n^2\left(2\|M^{-1}\hb_R\|^2+2\|D\|^2+\|M^{-1}\hb_L\|\right),
\end{align*}
again by using the Cauchy-Schwarz and Young's inequalities and applying Lemma \ref{lemma:adjoint_norm} afterwards, as we also will do in the following to split the operator norms.

Applying Lemma \ref{lemma:deriv_interpol_estimate} and the property $(1-\eta_c)/\alpha = \lambda_c^{-1}$, we continue with
\begin{align*}
  \quad
  \|L_{(c-1)}\|^2 +2\alpha\|&L_{cL}\|^2 + 3(1-\alpha) \|L_{(c+1)LL}\|^2
  \\
  &\le
  6\left( \|S_{c-1}M^{-1}D^TM\|^2+\|S_{c-1}M^{-1}\hb_R\|^2 + \|\eta_c S_{c-1}M^{-1}\Ip^TD^TM\Ip\|^2 \right) \\
  & \quad + 4\alpha\left(\|S_{c}\left(1-\eta_c\right)M^{-1}\hb_L\|^2 + \|S_{c}\eta_cM^{-1}MD\Ip\|^2 \right)
  +6(1-\alpha)\|\eta_c S_{c+1}M^{-1}\hb_{J_0^2}\| \\
  &\le S_{n}^2\left(6\left( \|D\|^2+\|M^{-1}\hb_R\|^2 + \eta_c^2\alpha^2 \mathcal{C}(p)^2 \|\Ip\|^2 \right) \right.\\
  & \quad \quad \quad + 4\alpha \left(\frac{1}{{\lambda_c}^2}\|M^{-1}\hb_L\|^2 + \eta_c^2\mathcal{C}(p)^2 \right) \left.+ 6\frac{\eta_c^2}{1-\alpha} \|M^{-1}\hb_{J_0^2}\|    \right).
\end{align*}
Next, we examine the terms originating from the cut cell itself:
\begin{align*}
 \quad
 \frac{3}{\alpha}\|&L_{(c-1)R}\|^2 + 2 \|L_{c}\|^2 + 3\frac{1-\alpha}{\alpha}\|L_{(c+1)L}\|^2
 \\
 &\le \frac{3}{\alpha}\|S_{c-1}\eta_cM^{-1}\Ip^TD^TM\|^2\\
 & \quad + 4\left(\|S_c(1-\eta_c)M^{-1}\hb_R\|^2+\|S_c(1-\eta_c)M^{-1}D^TM\|^2\right)
  + 3\frac{1-\alpha}{\alpha}\|S_{c+1}(1-\eta_c)M^{-1}\hb_L\|^2\\
 & \le S_n^2\left(3\eta_c^2\alpha\mathcal{C}(p)^2 + \frac{4}{\lambda_c^2}\left(\|M^{-1}\hb_R\|^2 + \|D\|^2\right) + 3\frac{\alpha}{\lambda_c^2(1-\alpha)}\|M^{-1}\hb_L\|^2\right).
\end{align*}
The final term we need to look at can be estimated similar to the terms of the background method, resulting in
\begin{align*}
  \|L_{(c+1)}\|^2 + \frac{2}{1-\alpha}\|L_{(c+2)L}\|^2 =S_n^2\left(\frac{6}{(1-\alpha)^2}\left(\|D\|^2+ \|M^{-1}\hb_R\|^2\right) + \frac{4}{1-\alpha}\|M^{-1}\hb_L\|^2\right).
\end{align*}
We can now combine these results to receive an estimate for the full operator norm:
% old version
 %\begin{equation}\label{eq:operatornrom_estimate}
 %   \|Lu\|_M^2 \le \frac{4a\|u\|_M^2}{{\Delta x}^2}\max\left\{
 %     \begin{array}{cc}
 %     6\left(\|D\|^2+\|M^{-1}\hb_R\|^2+\|M^{-1}\hb_L\|^2\right), \\
 %     6\left(\|D\|^2+\|M^{-1}\hb_R\|^2+\eta_c^2\alpha^2\|\hat{D\Ip}\|^2\|\Ip\|^2+\frac{\eta_c^2}{1-\alpha}\|M^{-1}\hb_{J^0_2}\|^2\right)\\
 %     \quad + 4\frac{\alpha}{\lambda_c^2}\|M^{-1}\hb_L\|^2 + 4\alpha\eta_c^2\|\hat{D\Ip}\|^2, \\
 %     \frac{4}{\lambda_c^2}\left(\|D\|^2+\|M^{-1}\hb_R\|^2+\|M^{-1}\hb_L\|\right)+3\eta_c^2\alpha\|\hat{D\Ip}\|^2,\\
 %     \frac{6}{(1-\alpha)^2}\left(\|D\|^2+\|M^{-1}\hb_R\|^2+\|M^{-1}\hb_L\|^2\right)
 %     \end{array}
 %     \right.
 %\end{equation}
 % New version: A little bit more lengthy, but the simplificating estimates made in the old version serve no benefit.
 \begin{equation}\label{eq:operatornrom_estimate}
    \|Lu\|_M^2 \le \frac{4a^2\|u\|_M^2}{{\Delta x}^2}\max
      \begin{cases}
        3\left(2\|M^{-1}\hb_R\|^2+2\|D\|^2+\|M^{-1}\hb_L\|\right), \\
        6\left( \|D\|^2+\|M^{-1}\hb_R\|^2 + \eta_c^2\alpha^2 \mathcal{C}(p)^2 \|\Ip\|^2 \right) \\
        \quad + 4\alpha \left(\frac{1}{{\lambda_c}^2}\|M^{-1}\hb_L\|^2 + \eta_c^2\mathcal{C}(p)^2 \right) + 6\frac{\eta_c^2}{1-\alpha} \|M^{-1}\hb_{J_0^2}\|, \\
      3\eta_c^2\alpha\mathcal{C}(p)^2 + \frac{4}{\lambda_c^2}\left(\|M^{-1}\hb_R\|^2 + \|D\|^2\right) + 3\frac{\alpha}{\lambda_c^2(1-\alpha)}\|M^{-1}\hb_L\|^2,\\
      \frac{6}{(1-\alpha)^2}\left(\|D\|^2+ \|M^{-1}\hb_R\|^2\right) + \frac{4}{1-\alpha}\|M^{-1}\hb_L\|^2.
      \end{cases}
\end{equation}
None of the remaining terms depend on $\alpha^{-1}$: The remaining $\alpha$-dependencies can be estimated by $\alpha\le 0.5$. Besides the operator norms that do not originate from the background method,
the remaining ones were estimated in Lemmas~\ref{lemma:interpol_estimate}, \ref{lemma:deriv_interpol_estimate}, and \ref{lemma:interpol_estimate_outflowbound}, where we also use $\alpha \le 0.5$ to find a $C$ just depending on $p$ and the chosen nodes,
  such that
  \begin{equation*}
    \|L u\|_M \le \frac{Ca}{\Delta x} \|u\|_M.
    \qedhere
  \end{equation*}
\end{proof}

\begin{remark}[Multiple cut cells in need of stabilization]
\label{remark:multiple_cut_cells_2}
  Note that this model case effectively addresses a single small cut cell. Extending the analysis to multiple cut cells leads to certain consequences, depending on their distance:
  Multiple small cut cells with
  \begin{itemize}
    \item a minimum of zero cells in between: As elaborated in Remark~\ref{remark:multiple_cut_cells}, this formulation of DoD is not applicable to these type of problems yet,
          so we exclude this case here.
    \item a minimum of one cell in between: The outflow cell of the first small cell is also the inflow cell of the second small cell.
          In this case, this cell experiences also both of the effects coming from the DoD stabilization, being an inflow and outflow cell.
          In terms of the operator norm, the submatrices affecting $u_{(c_1+1)}=u_{(c_2-1)}$ are scaled by $(1-\alpha_1)^{-2}$
          but also consist of the additional terms for stabilizing $u_{c_2}$, which leads to an even bigger operator norm.
          Nevertheless, these effects are locally additive contributions. As the full operator norm is determined by the largest local contribution,
          the final operator norm will remain in the same order of magnitude.
          This can be verified though simulations by the need of a smaller time step size.
    \item two or more cells in between: In this case, the effects described above vanish, such that the stabilizations are independent.
          Therefore, this does not influence our estimate. In reality, one of these two stabilizations determines the sharp operator norm.
          Since the operator norm in dependence of the cut-cell factor alpha is not monotone, further determinations of the cell dominating the operator norm can not be made in the general case.
  \end{itemize}
\end{remark}

As implied by Theorem~\ref{theorem:operatornorm_estimate_lin_adv}, the DoD stabilization addresses the small cell problem for arbitrary small cut cells successfully.
However, we can extract some more crucial information out of the operator norm estimate. As stated in Lemmas \ref{lemma:interpol_estimate}, \ref{lemma:deriv_interpol_estimate}, and \ref{lemma:interpol_estimate_outflowbound} and numerically observed (and moderated in its impact) in Remark~\ref{remark:unstable_exptrapolation_verification},
the respective operator norms have an exponential scaling in $p$ and tend to increase for larger cut cells, roughly about $(1+\alpha)^p$. We observe in simulations, that this correlates to our CFL restrictions:
Small cut cells can be stabilized perfectly in the sense that we obtain a similar CFL condition as for Figure \ref{fig:bad_opnorms}, where we display this behavior for Gauss-Lobatto nodes.
$\eta_c=1$ displays the case where $1-\eta_c$ aligns with the cut-cell size. Because the other choice of $\eta_c = 1-\min\left\{1,\alpha\Delta x/(a\Delta t)\right\}$ varies by $\Delta t$ and the transport speed $a$,
we show some variations of $\lambda_c$.
We observe that smaller Courant numbers tend to increase the operator norm, while larger ones seem to decrease it. We will next investigate the latter phenomenon.

\begin{figure}
  \begin{minipage}[t]{0.32\textwidth}
    \centering
    \includegraphics[width=\textwidth]{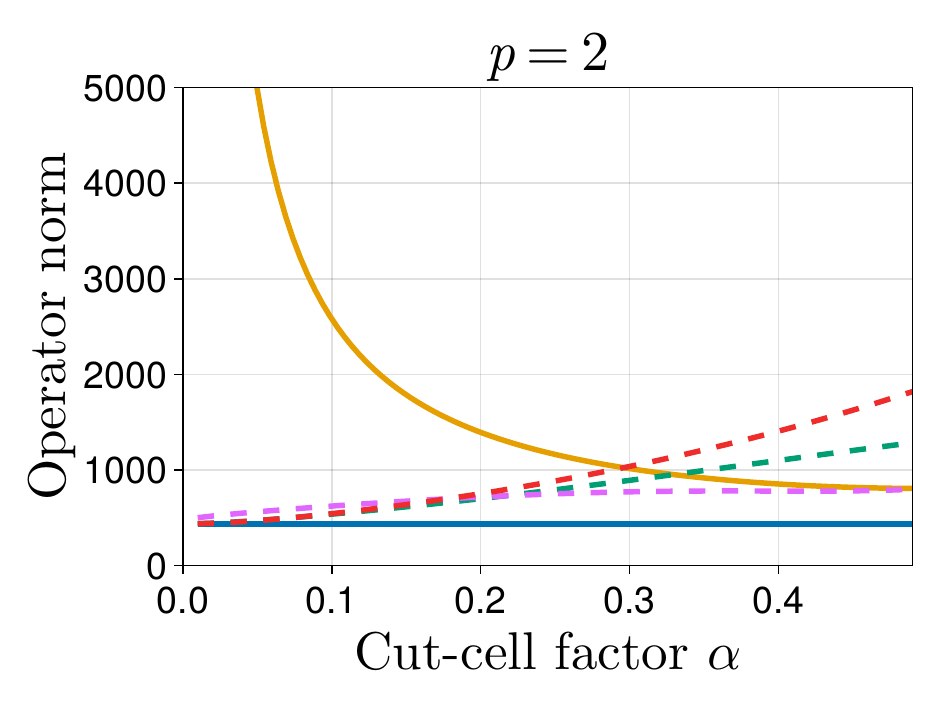}
  \end{minipage}
  \begin{minipage}[t]{0.32\textwidth}
    \centering
    \includegraphics[width=\textwidth]{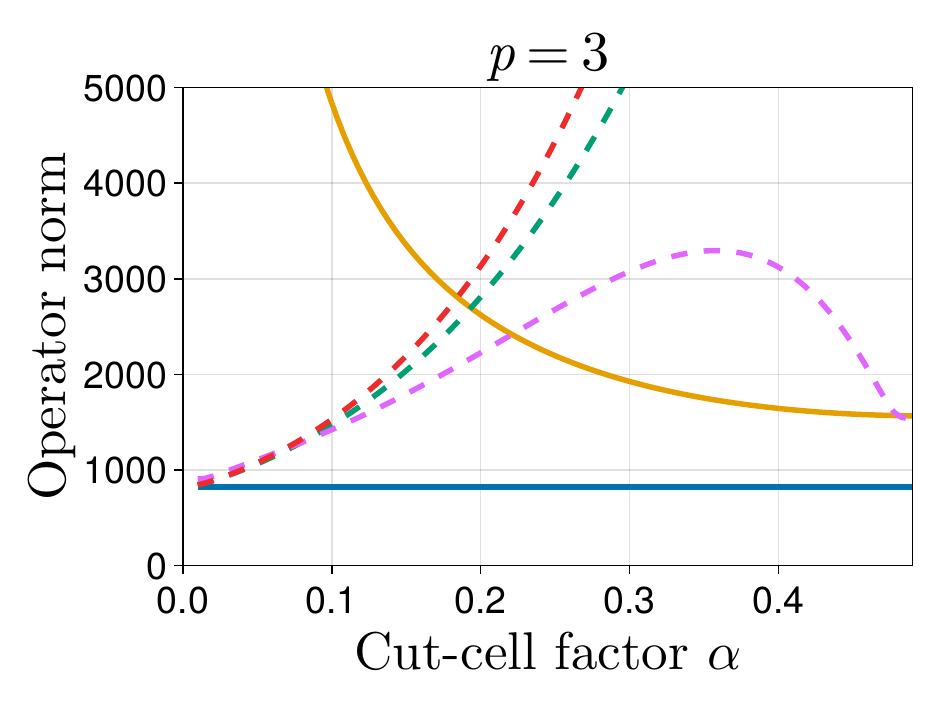}
  \end{minipage}
  \begin{minipage}[t]{0.32\textwidth}
    \centering
    \includegraphics[width=\textwidth]{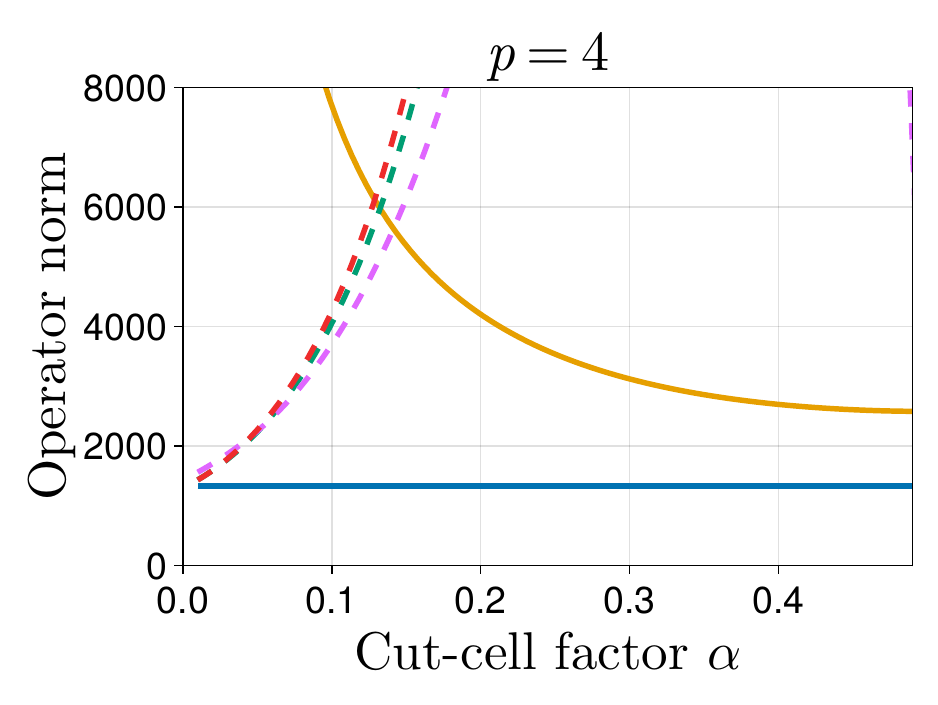}
  \end{minipage}
  \centering
  \includegraphics[width=0.6\textwidth, trim ={0 5cm 0 5cm} , clip]{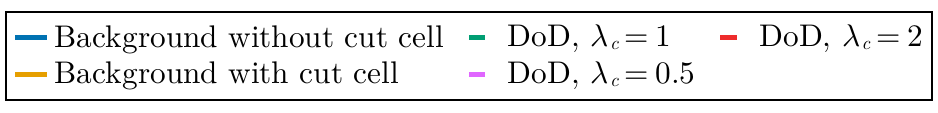}
  \caption{Operator norms for GLL nodes and different orders with $\Delta x = 1/50$.}
  \label{fig:bad_opnorms}
\end{figure}

\begin{remark}
  Other works such as \cite[eqs. (3.2a/b)]{Xu2019strong} that examine fully discrete stability use so-called inverse inequalities that estimate the volume and boundary terms and relate them to the mesh size. By using the matrix-type framework as in this article,
  this is equivalent to use the homogeneity of the operator norms to extract the scaling on the cell, e.g., $\|S_iDu\|=S_i\|Du\|\le S_i\|D\|\|u\|$ where $\|D\|$ does not depend on the spatial step size
  and the inverse dependency on  $\Delta x_i$ originates from $S_i$.
\end{remark}
\label{chap:stability_proof}

\section{A pragmatic correction approach}
\label{sec:05_pragmatic_corr}
The undesired side effect of an increasing operator norm for larger
cut cells, when considering higher-order spatial discretizations in
combination with the DoD stabilization, limits its application to
cut-cell meshes. However, as shown in Figure \ref{fig:bad_opnorms} for
$p=3$, specific choices of $\lambda_c$ can mitigate the strong
increase of the operator norms.

Considering \eqref{eq:operatornrom_estimate}, there are particularly two mechanisms involved.
On one hand, the terms that depend on $\mathcal{C}(p)$ and $\Ip$ are responsible for the exponential increase in $p$ of $\|Lu\|_M$. Each of them has a coefficient of at least $\eta_c$.
On the other hand, we have the term $4\alpha/\lambda_c^2\|M^{-1}\hb_L\|^2$, which originates from $\|L_{cL}\|$ and describes the reduced inflow into the cut cell.
Smaller values for $\lambda_c$ lead to $\eta_c$ decreasing more rapidly according to \eqref{eq:penalty_parameter_defi} for increasing $\alpha$, reaching $\eta_c=0$ for $\alpha = \lambda_c$.
Thus, we can control the terms involving $\mathcal{C}(p)$ and $\Ip$ through the coefficient $\eta_c$.
However, reducing $\lambda_c$ obviously means decreasing the amount of stabilization.
Therefore, we pay a price elsewhere. Generally, this corresponds in \eqref{eq:operatornrom_estimate} to all terms with $\lambda_c$ in the denominator.
A numerical analysis of the operator norms of the involved block matrices $L_c, L_{cL}, \hdots$ suggests that the term $4\alpha/\lambda_c^2\|M^{-1}\hb_L\|^2$ dominates at some point.
Therefore, we need to find an appropriate $\lambda_c$, that stabilizes not too much and not too little, ensuring a balance between extrapolation and minor cell problems.
This motivates us to find optimized values for $\lambda_c$.
\subsection{Determination of the optimized $\lambda_c$}\label{subsec:05_determination}
First, we aim to provide explicit values for the choices of $\lambda_c$ that minimize the operator norm.
We calculate these by evaluating the operator norm of the right-hand side $L(\lambda_c, \alpha)$ for $\alpha \in [0, 0.5]$, $\lambda_c \in [0, 1]$, and extracting $\lambda_c$ such that
\begin{equation}\label{eq:minmax_opnorm}
    \min\limits_{\lambda_c\in \mathcal{S}_{\lambda_c}}\max\limits_{\alpha \in \mathcal{S}_\alpha}\{||L(\lambda_c, \alpha)||_M\}, \quad \mathcal{S}_{\lambda_c} = [0.001, 0.499], \; \mathcal{S}_{\alpha} = [0.01, 1],
\end{equation}
for a fixed order $p$, choice of nodes, and a grid of 51 cells.
We include the cut between cells 25 and 26 (the position did not significantly impact any experiments) and took 51 equidistant discrete values, including the boundaries of $\mathcal{S}_{\lambda_c}, \mathcal{S}_{\alpha}$, to evaluate expression \eqref{eq:minmax_opnorm}.
We present the resulting values for $\lambda_c$ in Table~\ref{table:lambda_c_values} and denote these as optimized $\lambda_c$ in the following. The origin of this effect, along with experiments, suggests that these values are independent of the number of cells and the present cut cells, assuming we do not need to stabilize multiple neighboring cut cells.

\begin{table}
\begin{center}
  \caption{Optimized values for $\lambda_c$ to obtain a minimal operator norm by \eqref{eq:minmax_opnorm}.}
    \begin{tabular}{p{0.6cm}|p{0.6cm}||p{2cm}|p{2cm}|p{2cm}|p{2cm}}
     %\hline
     \multicolumn{2}{c||}{Poly.Deg.} & \multicolumn{2}{|c|}{Gauss-Lobatto-Legendre} & \multicolumn{2}{|c}{Gauss-Legendre}\\
     \hline
     0& 6& 1.00000& 0.12877&1.00000 & 0.11699\\
     1& 7& 0.87665& 0.10218& 0.78913& 0.09643\\
     2& 8& 0.53986& 0.0862& 0.44159&0.07978 \\
     3& 9& 0.32132& 0.07213& 0.27871&0.06808 \\
     4& 10& 0.22302&0.06070 & 0.19529& 0.05909\\
     5& 11& 0.16104& 0.05375& 0.14927& 0.05063\\
     %\hline
    \end{tabular}
    \label{table:lambda_c_values}
    \end{center}
\end{table}

Applying these values to the DoD scheme, we will denote this as the optimized scheme.
Note that this is solely an optimization to balance the operator norm for this specific model case.

\subsection{Exemplary discussion of the optimized $\lambda_c$}\label{sec:05_analysis}
In Figure~\ref{fig:operatornorms_best_lambda_c}, we display the evolution of some operator norms for an optimized value of $\lambda_c$, depending on the chosen basis.

\begin{figure}[htbp]
  \begin{minipage}[t]{0.32\textwidth}
    \centering
    \includegraphics[width=\textwidth]{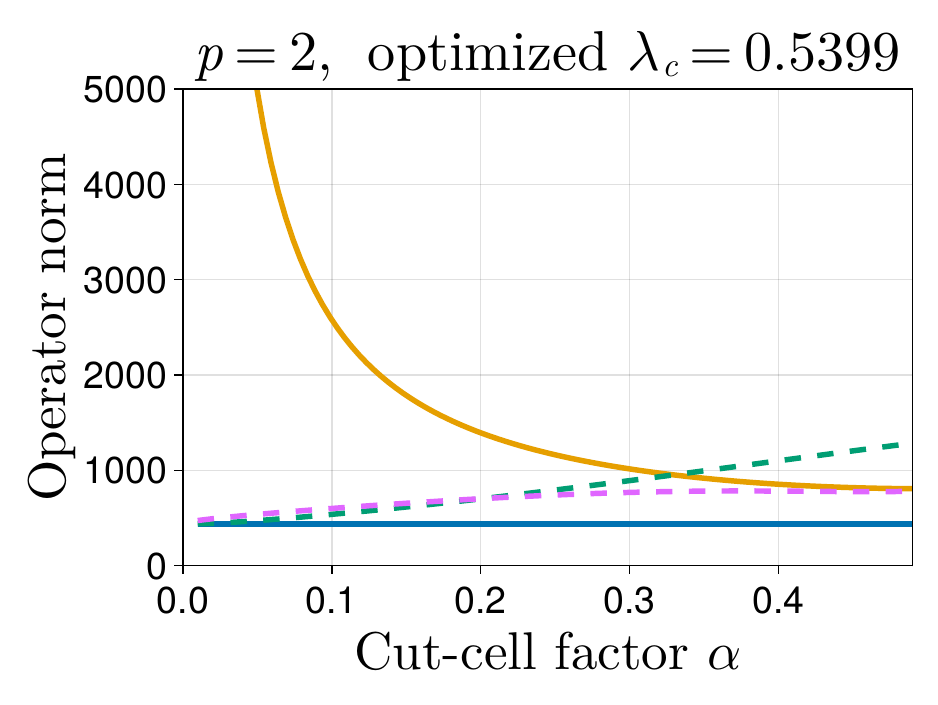}\\
  \end{minipage}
  \begin{minipage}[t]{0.32\textwidth}
    \centering
    \includegraphics[width=\textwidth]{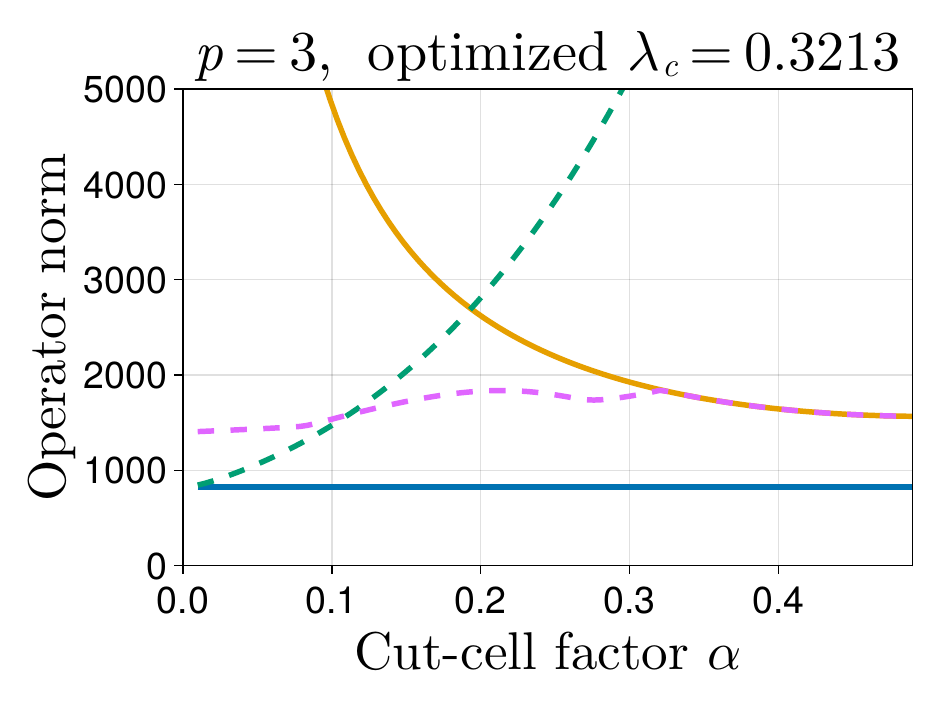}\\
  \end{minipage}
  \begin{minipage}[t]{0.32\textwidth}
    \centering
    \includegraphics[width=\textwidth]{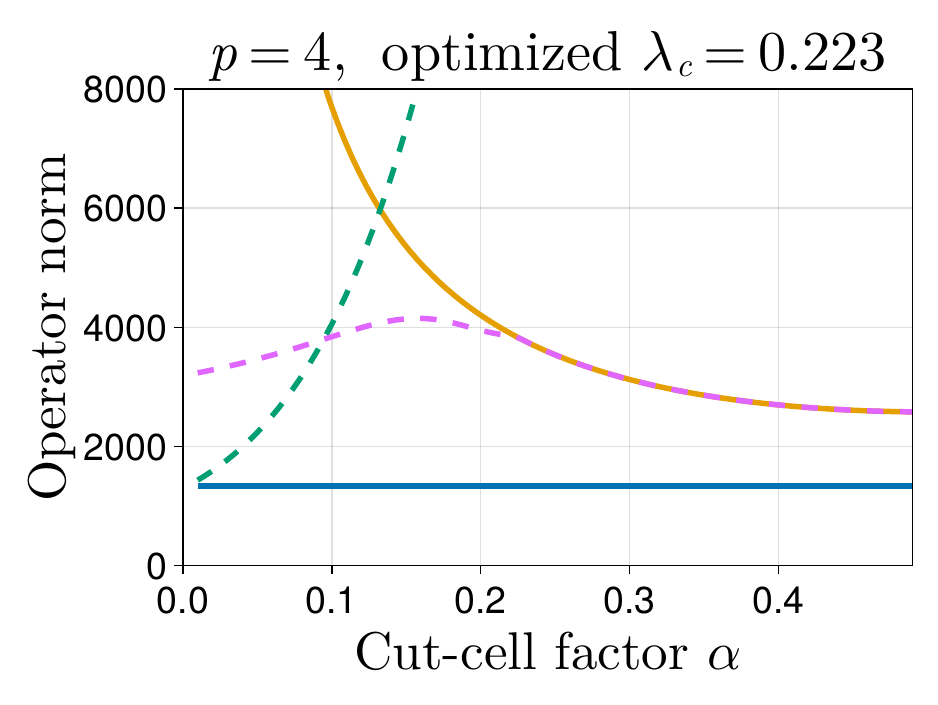}\\
  \end{minipage}
  \centering
  \includegraphics[width=0.6\textwidth, trim ={0 5cm 0 5cm} , clip]{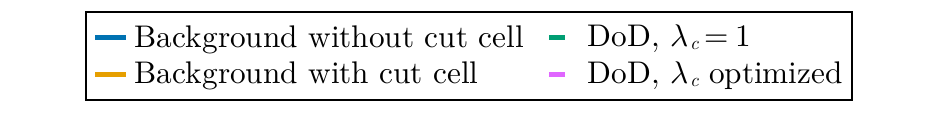}
  \caption{Operator norms for GLB nodes and different orders with $\Delta x = 1/50$.}
  \label{fig:operatornorms_best_lambda_c}
\end{figure}

The concave parts clearly correspond to the damping of the extrapolation terms, while we observe otherwise an increasing trend in $\alpha$.
This also suggests that $4\alpha/\lambda_c^2\|M^{-1}\hb_L\|^2$ is responsible for these parts, as the other terms in \eqref{eq:operatornrom_estimate} that depend on $1/\lambda_c^2$ are constant in $\alpha$ (and retain their dependency on it according to previous estimates).
Note that this optimized choice solely aims to reduce the operator norm and neglects the intention behind the exact physical domain of dependence.
Nevertheless, we still contend with the increased stencil for the outflow cell, albeit with a weighting that does not depend on the physical domain of dependence.
Additionally, in the optimized cases, the DoD operator norm docks onto the unstabilized background norm, which corresponds to turning off the stabilization, i.e., $\eta_c = 0$, as intended.
This indicates that we have effectively addressed the extrapolation aspect, which was never meant to be governed by the minimum function in the definition of $\eta_c$.

\begin{figure}[htbp]
  \centering
    \begin{minipage}[t]{0.33\textwidth}
      \centering
      \includegraphics[width=\textwidth]{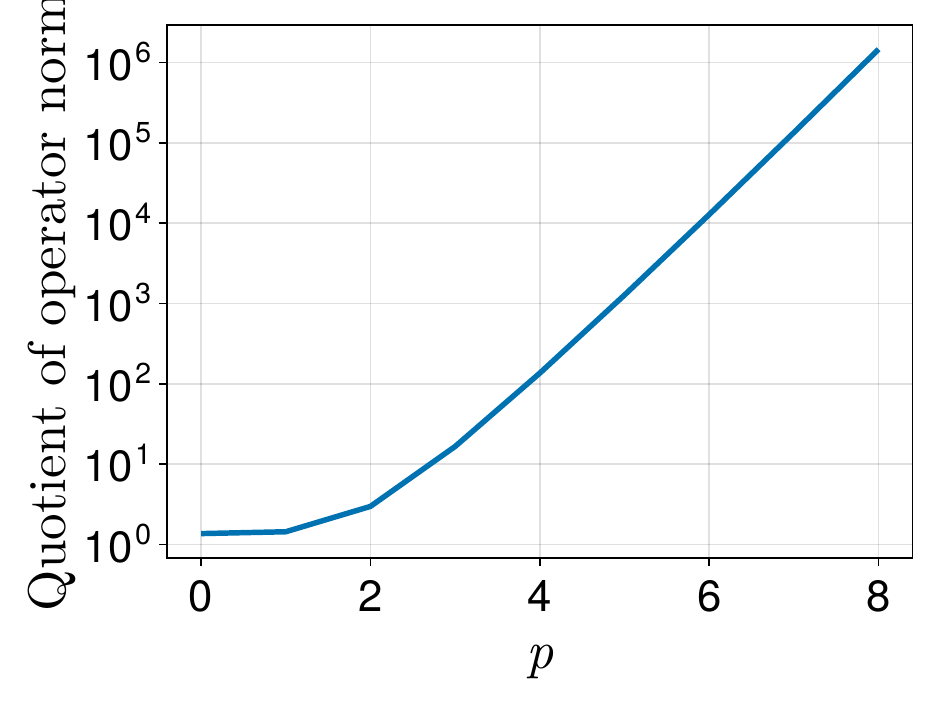}\\
      $\lambda_c = 1 \left( \Rightarrow \eta_c = 1-\alpha \right)$.
    \end{minipage}
    \begin{minipage}[t]{0.33\textwidth}
      \centering
      \includegraphics[width=\textwidth]{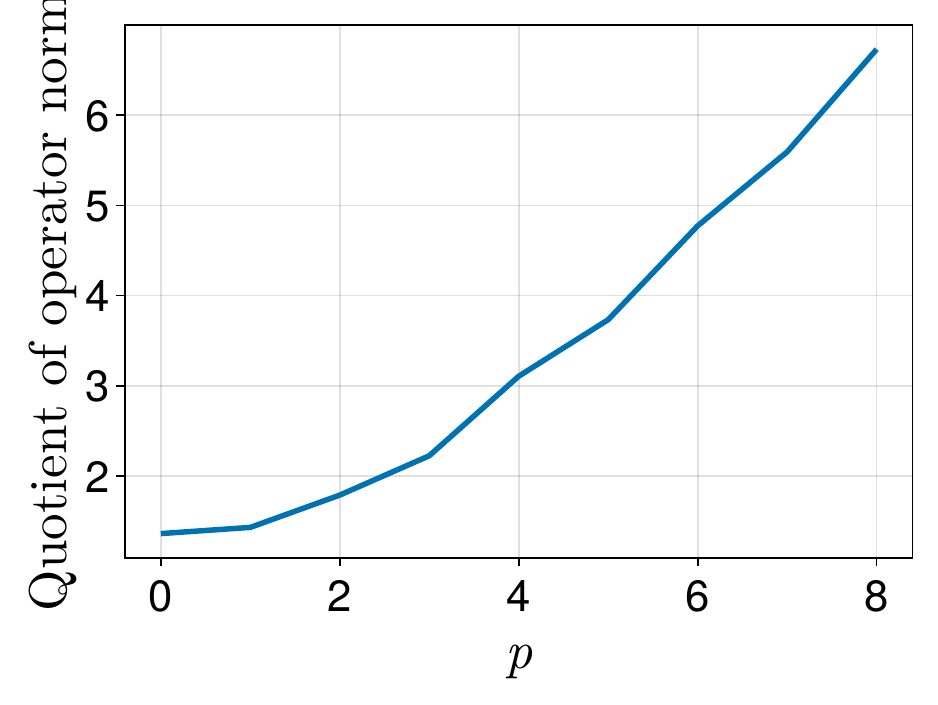}\\
      $\lambda_c$ optimized to achieve a minimized operator norm.
    \end{minipage}
    \caption{Quotients of $\|L_{\text{DoD}}\|/\|L_{\text{background}}\|$ for different choices of $\lambda_c$, using GLL nodes. Not the different scalings of the $y$-axes (logarithmic on the left, linear on the right).}
    \label{fig:operatornorms_quotient}
\end{figure}

In Figure \ref{fig:operatornorms_quotient}, we show the quotient of $\min_{\lambda_c}\|Lu\|$ and the background operator norm for GLL nodes.
We observe that the discrepancy between these norms increases with higher orders, leading to more restrictive time steps as $p$ increases.
However, we can still limit it to some extent compared to previous choices of $\eta_c$.

To provide a deeper practical understanding of the dominating terms that determine
the choice of $\eta_c$, we display in Figure~\ref{fig:partial_operatornorms} the operator norms of the
involved block matrices for the discussed choices of $\lambda_c$ in the specific
cases of GL nodes for $p=4, 5$. Note that we additionally show the operator norm
of the full right-hand side matrix $L$ and $(\star)=S_{c-1}M^{-1}\eta_c\Ip^TD^TM\Ip$. We observe that for $\lambda_c=1$, the terms that contain the
extrapolation operator have a significantly higher norm for larger cut cells
than the remaining ones. We want to emphasize that the norm of $(\star)$, which
applies the extrapolation to the test function and the solution, dominates
$L_{(c-1)}$ and all the other terms. This aligns with our theoretical analysis.
By using the optimized values for $\lambda_c$, we observe that these terms are not
responsible for the major contribution to the norm of $L$, but still represent the
maximum value. Further, we can assume that the terms $L_c$ and $L_{(c-1)}$
contribute the most to $\|L\|_M$, as their operator norms are of the largest
magnitude. This also aligns with our theory, as this optimization corresponds
to reduced stabilization of the small cut cell.

\begin{figure}[htbp]
  \centering
  \begin{minipage}[t]{0.37\textwidth}
    \centering
    \includegraphics[width=\textwidth]{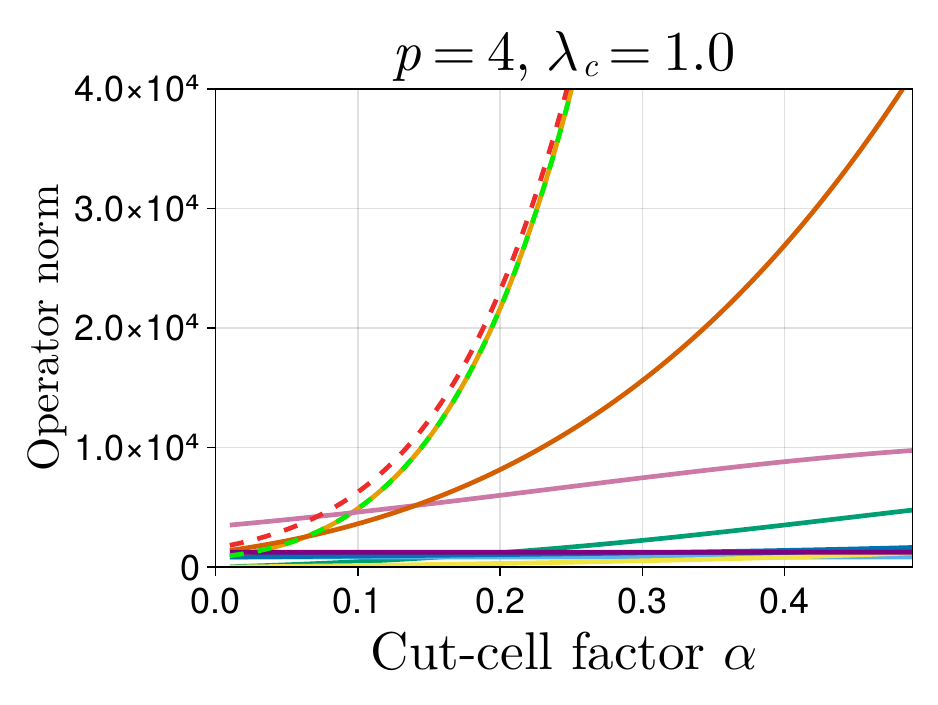}\\
  \end{minipage}
  \begin{minipage}[t]{0.37\textwidth}
    \centering
    \includegraphics[width=\textwidth]{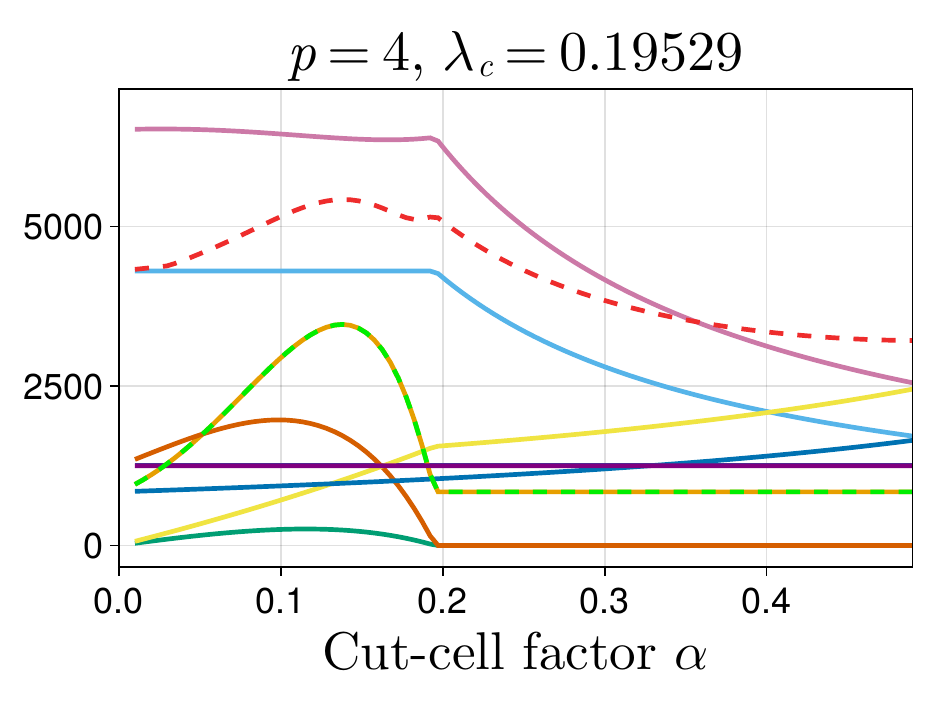}\\
  \end{minipage}\\
  \begin{minipage}[t]{0.37\textwidth}
    \centering
    \includegraphics[width=\textwidth]{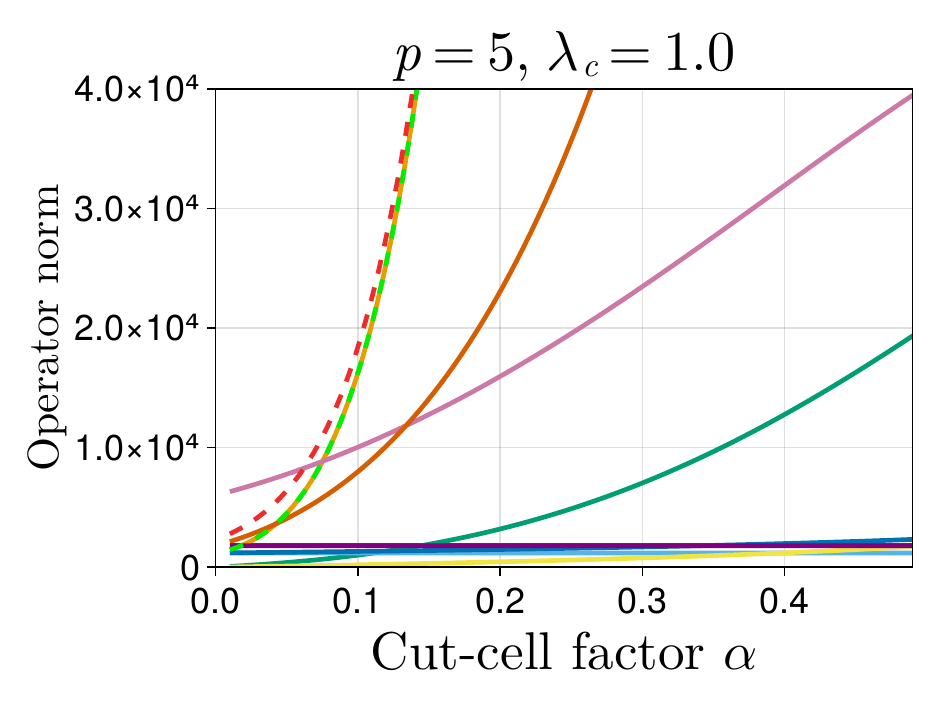}\\
  \end{minipage}
  \begin{minipage}[t]{0.37\textwidth}
    \centering
    \includegraphics[width=\textwidth]{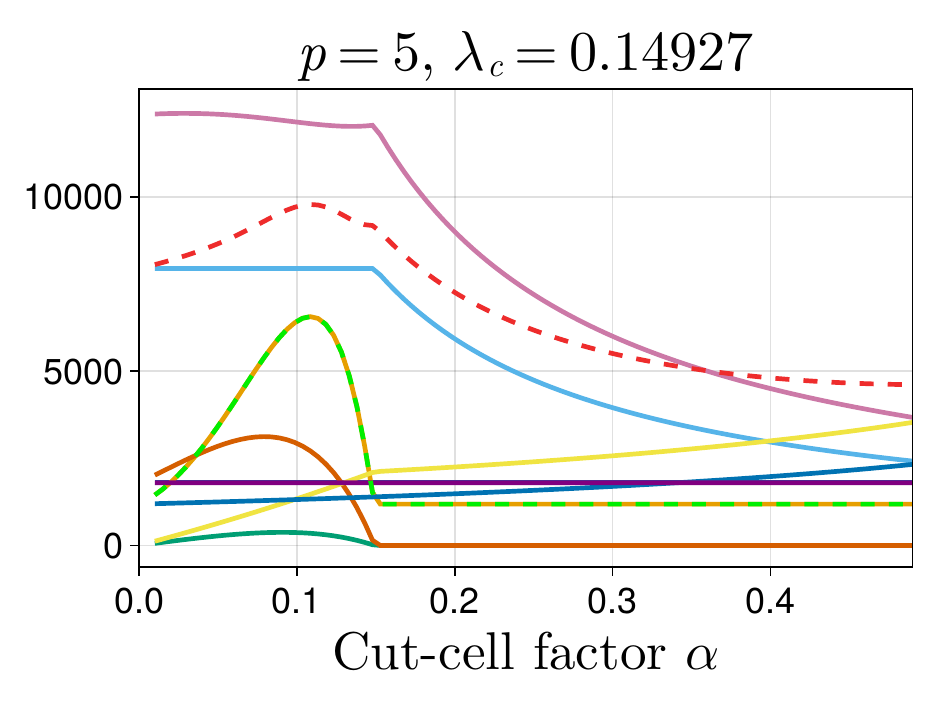}\\
  \end{minipage}
  \centering
  \includegraphics[width=0.7\textwidth, trim ={0 5cm 0 5cm} , clip]{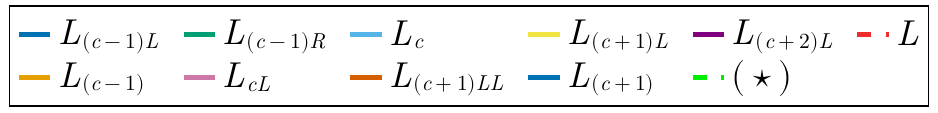}
  \caption{Operator norms of the involved block matrices in \eqref{eq:dod_semidiscretization_matrix}, including $L$ and the
   partial term $(\star)=S_{c-1}M^{-1}\eta_c\Ip^TD^TM\Ip$ of $L_{(c-1)}$. Using Gauss-Legendre nodes and $\Delta x = 1/50$.}
  \label{fig:partial_operatornorms}
\end{figure}

\section{Numerical tests}
\label{sec:06_numerics}
In this section, we present numerical tests of the linear transport equation to validate our theoretical findings.
We begin with the one-dimensional linear advection equation as in the
previous analysis and then proceed with some applications in 2D.
%as an outlook.

The 1D numerical methods are implemented in Julia
\cite{bezanson2017julia} based on the infrastructure of \newline
\texttt{SummationByPartsOperators.jl}~\cite{ranocha2021sbp}.

The 2D tests are implemented using the C++ PDE framework
DUNE~\cite{dune-recent,dunepaperI:08,dunepaperII:08,dune2.10}, including
the dune-functions module for functions space
bases~\cite{dune-functions-1,dune-functions-2}. We further rely on the
TPMC \cite{tpmc} library to calculate the cut cell geometries. The presented 2D data was generated with the help of GNU Parallel \cite{tange}.

We used Makie.jl \cite{danisch2021makie} to generate the plots.  All
code and data required to reproduce the numerical results presented in
this article are available in our reproducibility repository
\cite{petri2025domainRepro}.

\subsection{One-dimensional linear advection}
\label{sec:06_1D_numerics}

To investigate this optimization in practice, we apply the fully discrete scheme to the linear advection equation and determine the sharp CFL condition by adjusting the time step under which the method remains stable in long-term simulation.
We will compare this to the classic case with $\lambda_c=1$ and the background method, i.e., the DGSEM without any cut cells in the mesh.
We use the same settings for the semidiscretization as above and again vary $\alpha$ in $[0.001, 0.49]$.
We choose $a=1$ and $\sin(2 \pi x)$ as the initial condition over the domain $[0,1]$.
For time-stepping, which affects the explicit results, we apply appropriate
explicit RK methods that correspond to the order of the semidiscretization,
specifically SSPRK(2,2), SSPRK(3,3) \cite{Gottlieb1996},
and SSPRK(10,4) \cite{Ketcheson2008ssp}.
For the first order, we tested the explicit Euler method and obtained $\lambda_c=1$; there are clearly no differences compared to DoD with $\eta_c=\eta_c(\Delta t)$.
In each case, we obtain the sharp CFL restriction of 1 (independent of $\alpha$), which is also recognized from the background method.
Consequently, we do not present a plot for this case. The remaining results, up to order 4, are illustrated in Figure~\ref{fig:optimal_cfl_lts_1D}.

\begin{figure}
    \begin{minipage}[t]{0.32\textwidth}
      \centering
      \includegraphics[width=\textwidth]{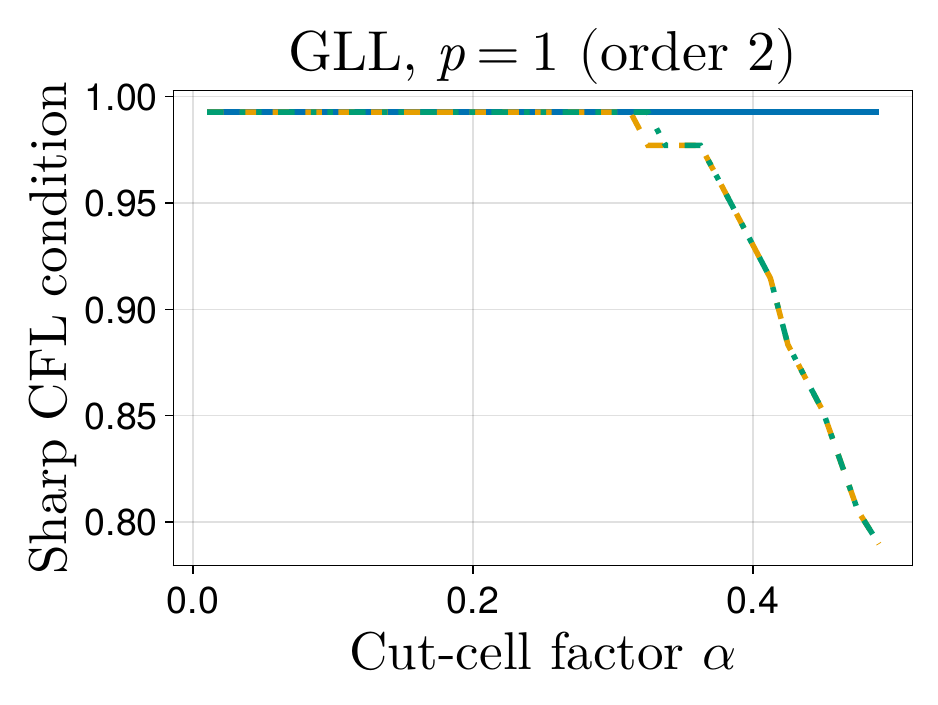}\\
    \end{minipage}
    \begin{minipage}[t]{0.32\textwidth}
      \centering
      \includegraphics[width=\textwidth]{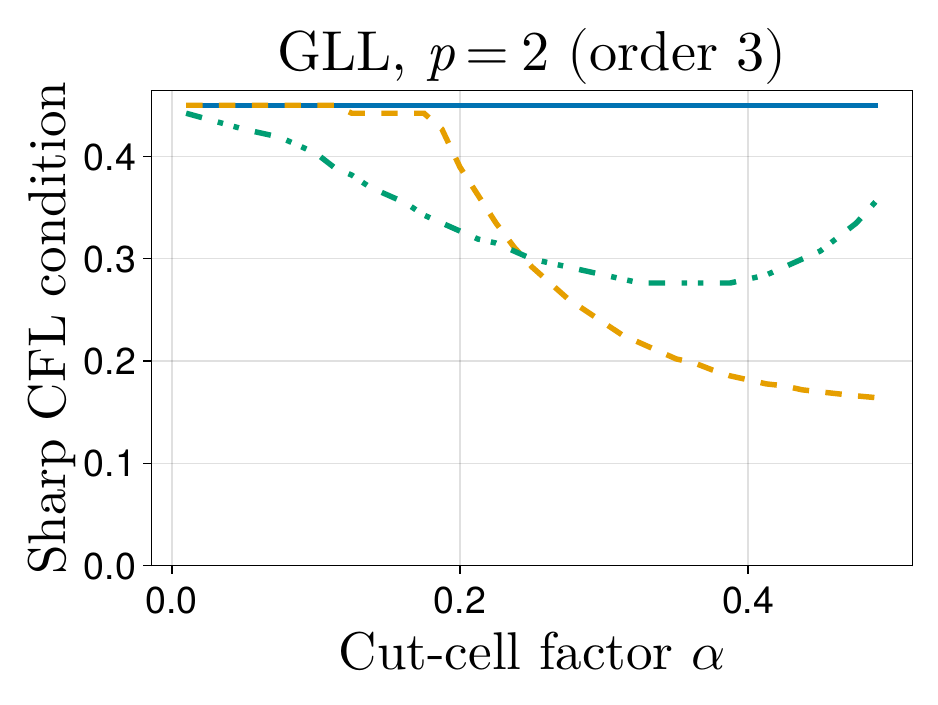}\\
    \end{minipage}
    \begin{minipage}[t]{0.32\textwidth}
      \centering
      \includegraphics[width=\textwidth]{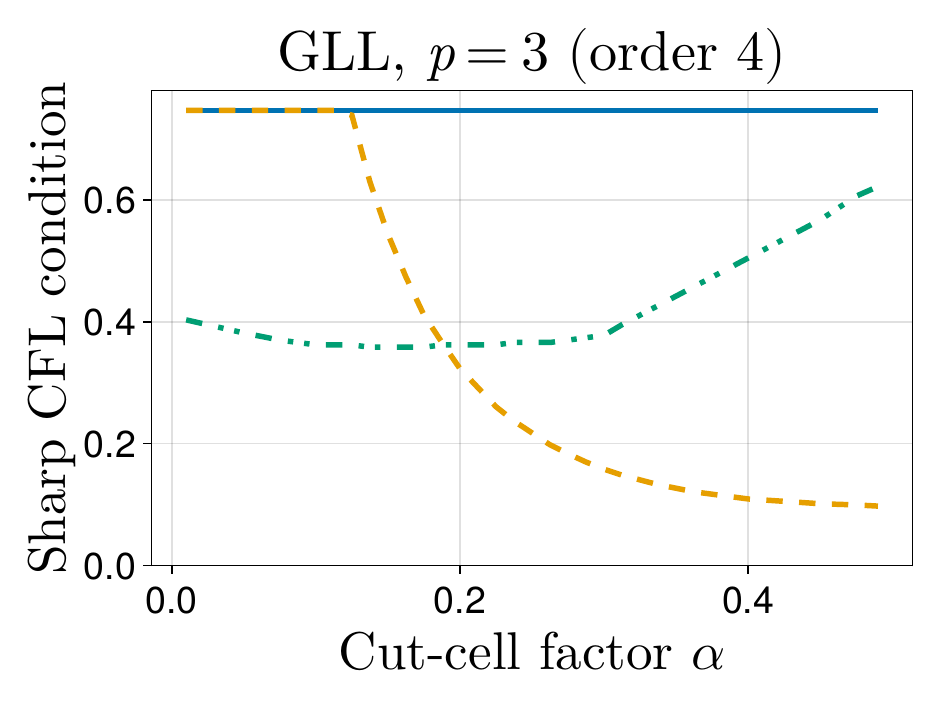}\\
    \end{minipage}\\
    \begin{minipage}[t]{0.32\textwidth}
      \centering
      \includegraphics[width=\textwidth]{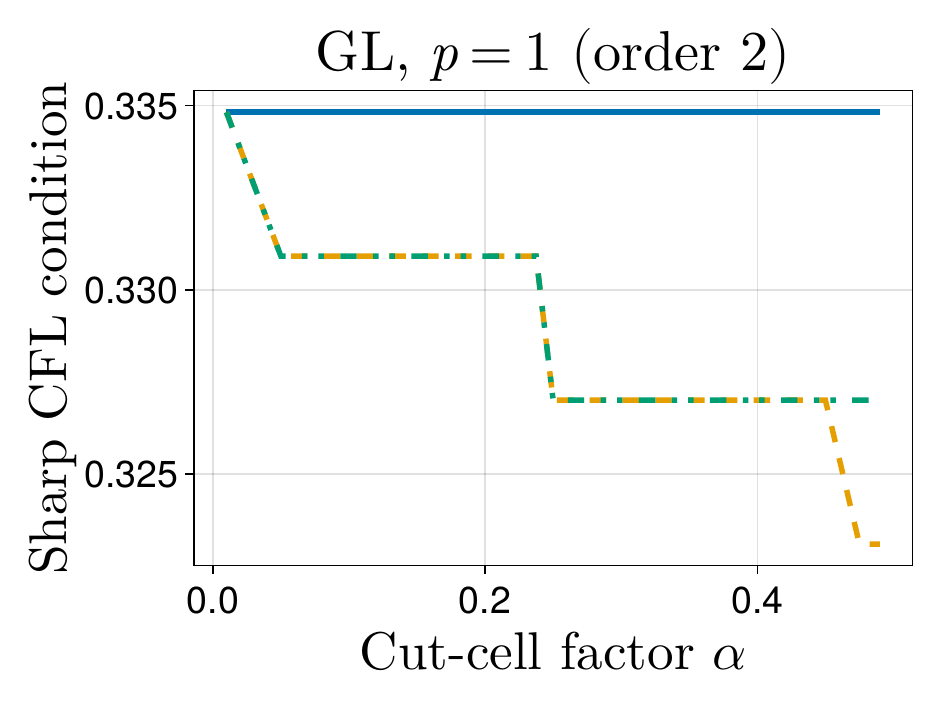}\\
    \end{minipage}
    \begin{minipage}[t]{0.32\textwidth}
      \centering
      \includegraphics[width=\textwidth]{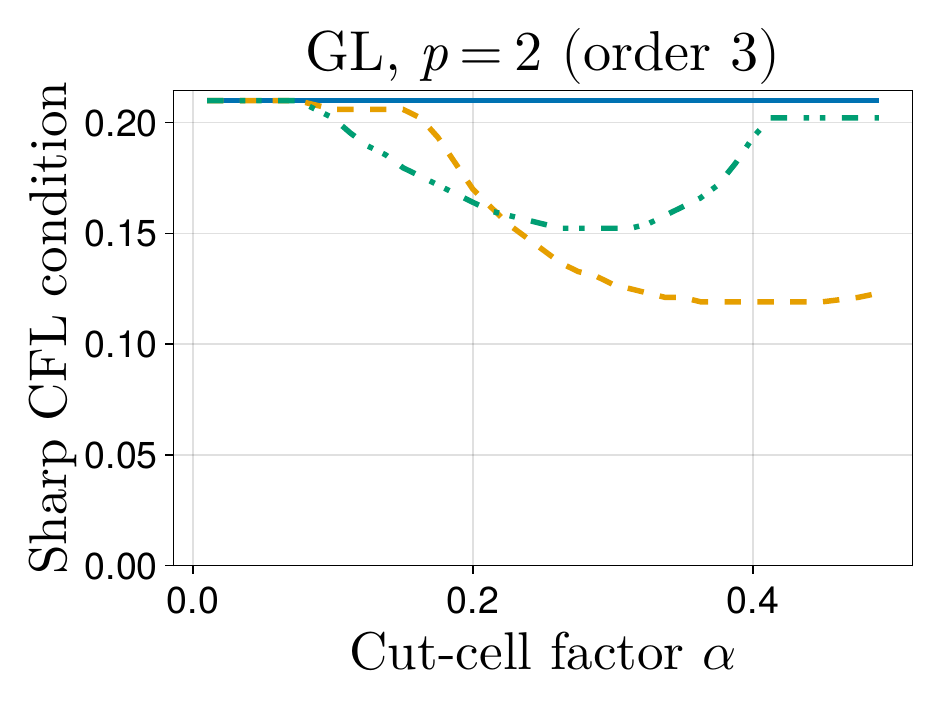}\\
    \end{minipage}
    \begin{minipage}[t]{0.32\textwidth}
      \centering
      \includegraphics[width=\textwidth]{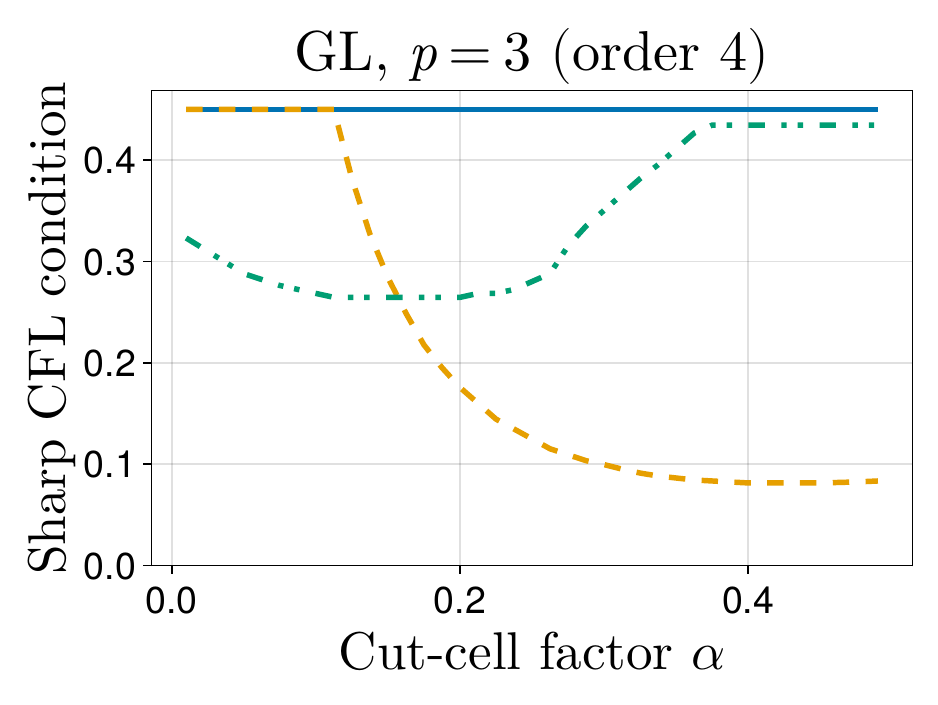}\\
    \end{minipage}
    \centering
    \includegraphics[width=0.7\textwidth, trim ={0 5.5cm 0 5.5cm} , clip]{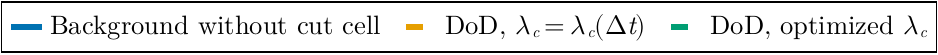}
    \caption{Sharp CFL restriction for long time stability, depending on the cut cell size.}
    \label{fig:optimal_cfl_lts_1D}
\end{figure}

We observe a similar behavior for both types of nodes: For the second order methods, we still see barely a difference between the two choices. Considering the higher orders,
the time step dependent DoD required a slightly loose CFL condition for small cut cells, as it strongly shifts for the larger cut cells. We also observe, that this gets more crucial, the higher the order
of the scheme. This is exactly what we would expect, as small values for $\lambda_c$ reduce the amount of stabilization, that is necessary for small cut cells and tends to be problematic for larger ones.
As in ``practice'', there are cut cells of arbitrary sizes. Therefore, the question of which parameters to choose reduces also to the highest possible CFL, that does not depend on a specific cut cell size, i.e.
we aim to choose the lowest possible CFL value for a specific choice of $\lambda_c$. In that case, our optimal choice seems to be more suitable and will be even more for higher order methods for this model problem.
Please note that the overall increased CFL values for the fourth-order case compared to the third-order case are due to the usage of the SSPRK(10,4), which uses more stages than necessary to achieve fourth-order accuracy to improve stability properties.

\begin{figure}
  \begin{minipage}[t]{0.32\textwidth}
    \centering
    \includegraphics[width=\textwidth]{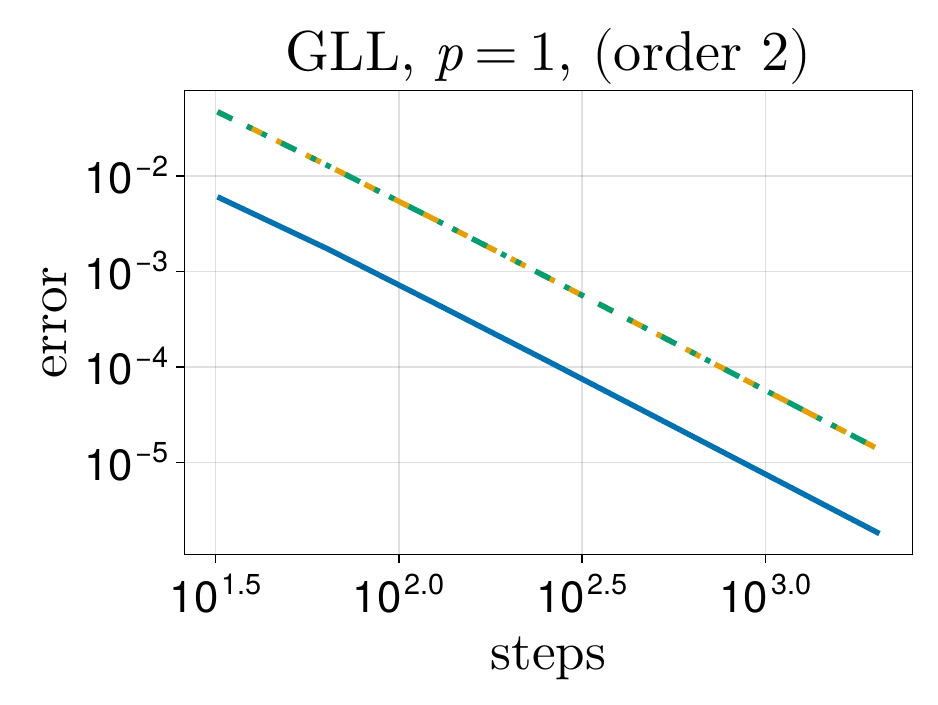}\\
  \end{minipage}
  \begin{minipage}[t]{0.32\textwidth}
    \centering
    \includegraphics[width=\textwidth]{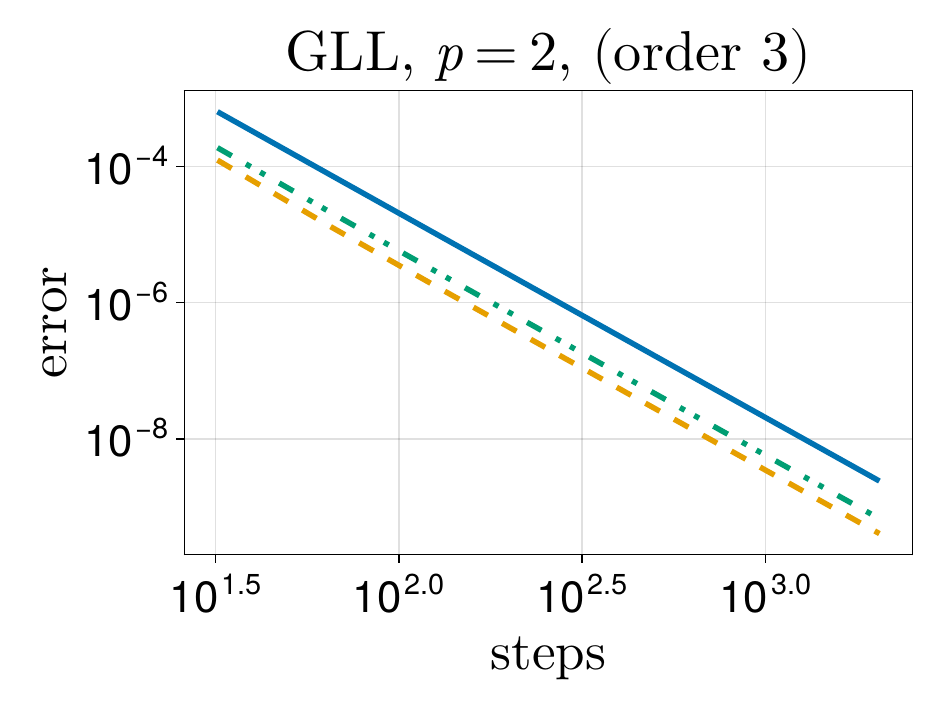}\\
  \end{minipage}
  \begin{minipage}[t]{0.32\textwidth}
    \centering
    \includegraphics[width=\textwidth]{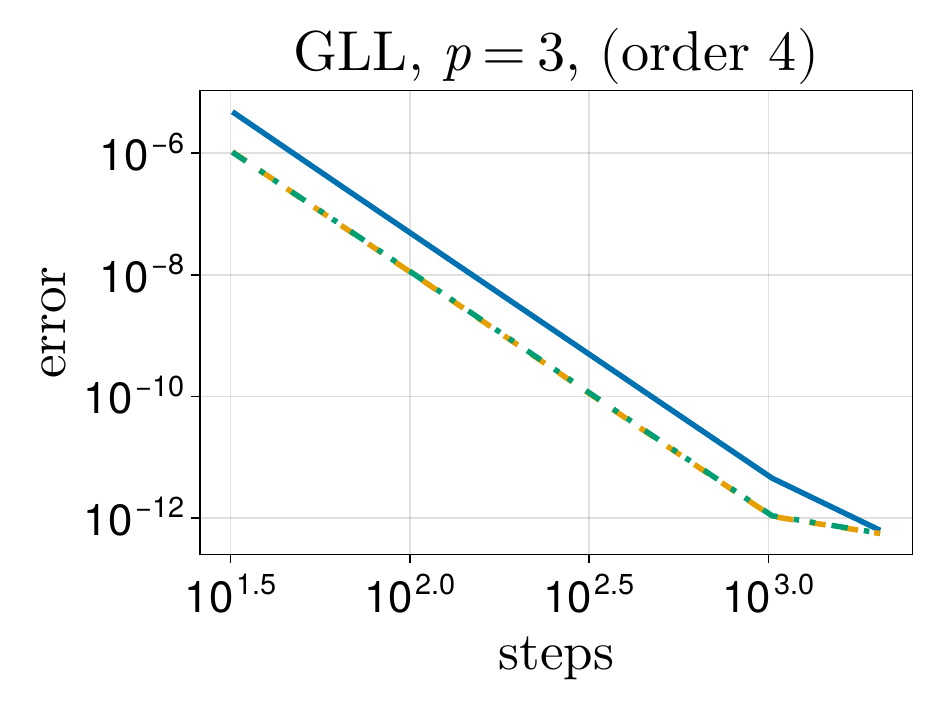}\\
  \end{minipage}\\
  \begin{minipage}[t]{0.32\textwidth}
    \centering
    \includegraphics[width=\textwidth]{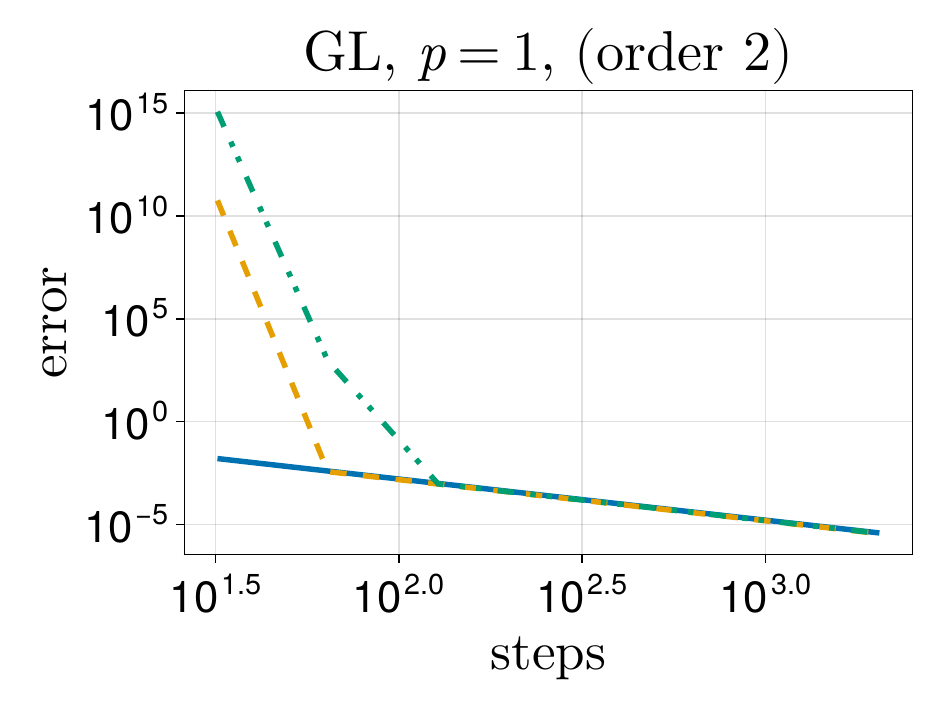}\\
  \end{minipage}
  \begin{minipage}[t]{0.32\textwidth}
    \centering
    \includegraphics[width=\textwidth]{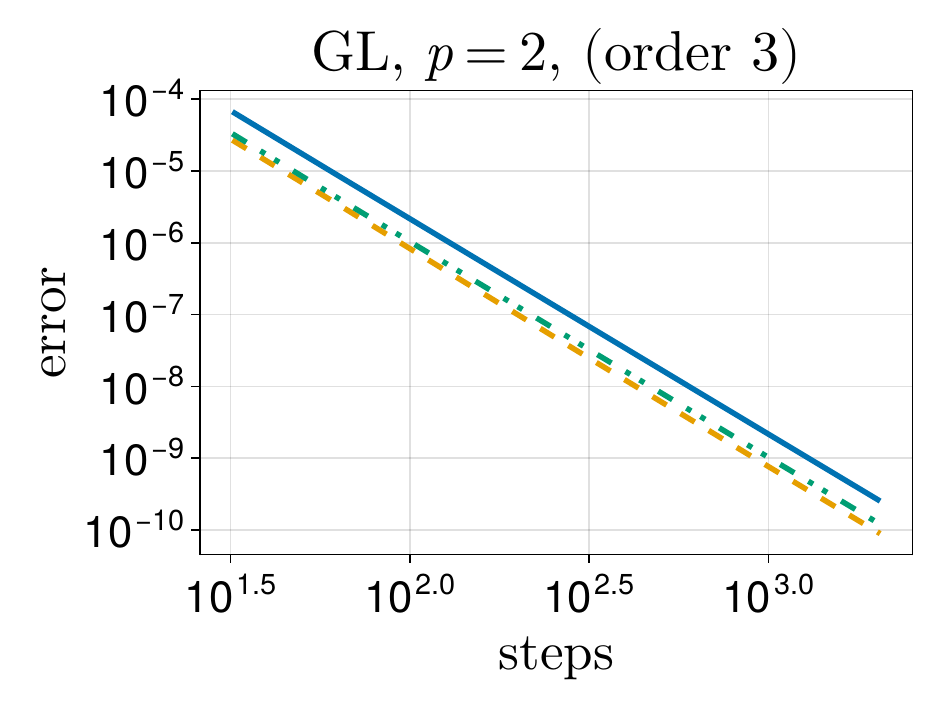}\\
  \end{minipage}
  \begin{minipage}[t]{0.32\textwidth}
    \centering
    \includegraphics[width=\textwidth]{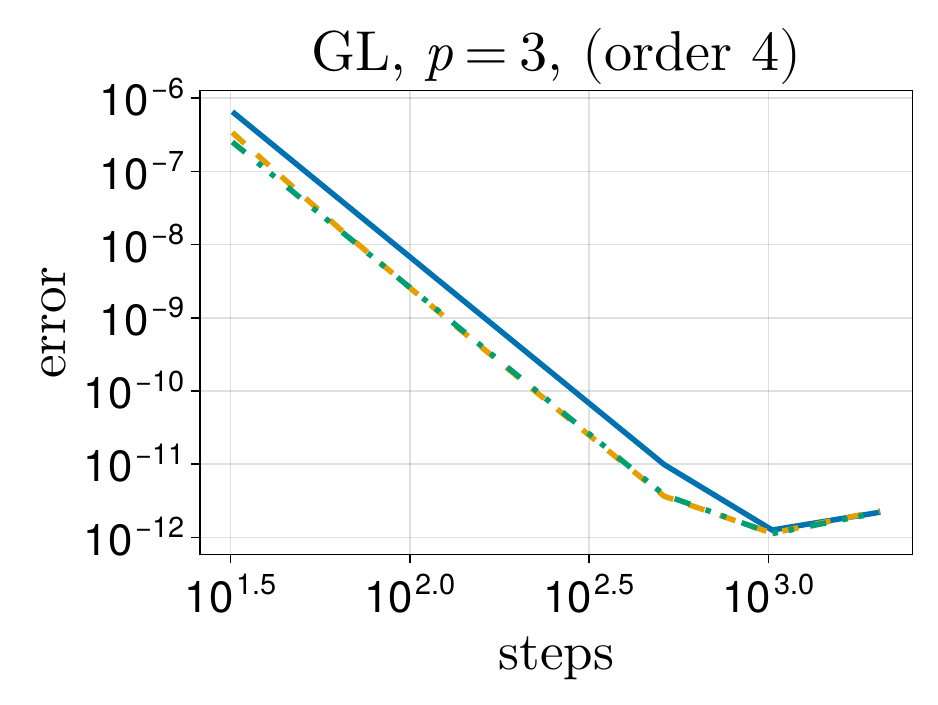}\\
  \end{minipage}
  \centering
  \includegraphics[width=0.7\textwidth, trim ={0 5.5cm 0 5.5cm} , clip]{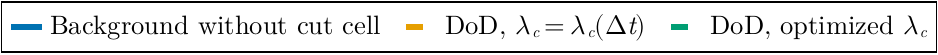}
  \caption{Convergence errors at the time $T=1$ with respect to the norm $\|\cdot\|_M$.}
  \label{fig:convergence_1D}
\end{figure}

Next, we want to investigate these methods for numerical convergence with the sharp CFL conditions.
Specifically, we select the largest sharp CFL, such that all considered cases (the background method and the two DoD variants)
 for fixed nodes and polynomial degree are stable and refine the mesh by applying multiple cut cells, ranging from $\alpha = 10^{-3}$ to $\alpha = 0.49$.
We also apply a safety factor of $0.95$ to the respective sharp CFL value to ensure that minor calculation errors do not impact stability.
As shown in Figure~\ref{fig:convergence_1D}, convergence is achieved as expected.
In Figure~\ref{fig:work_precision_1D}, we illustrate the error in relation to the number of time steps leading up to the final time of $T=1$, which correlates with the efficiency of the different schemes.
For this case, we select the corresponding maximal sharp CFL number for which the method is stable and apply a safety factor of $0.99$.
Here, we observe that the optimization results in at least the same and often smaller errors than the ``classic'' method,
indicating that this operator norm-based choice not only contributes to a more favorable CFL condition but also impacts the actual accuracy.
Once again, please note that these results pertain to the simple model problem for now.

\begin{figure}
  \begin{minipage}[t]{0.32\textwidth}
    \centering
    \includegraphics[width=\textwidth]{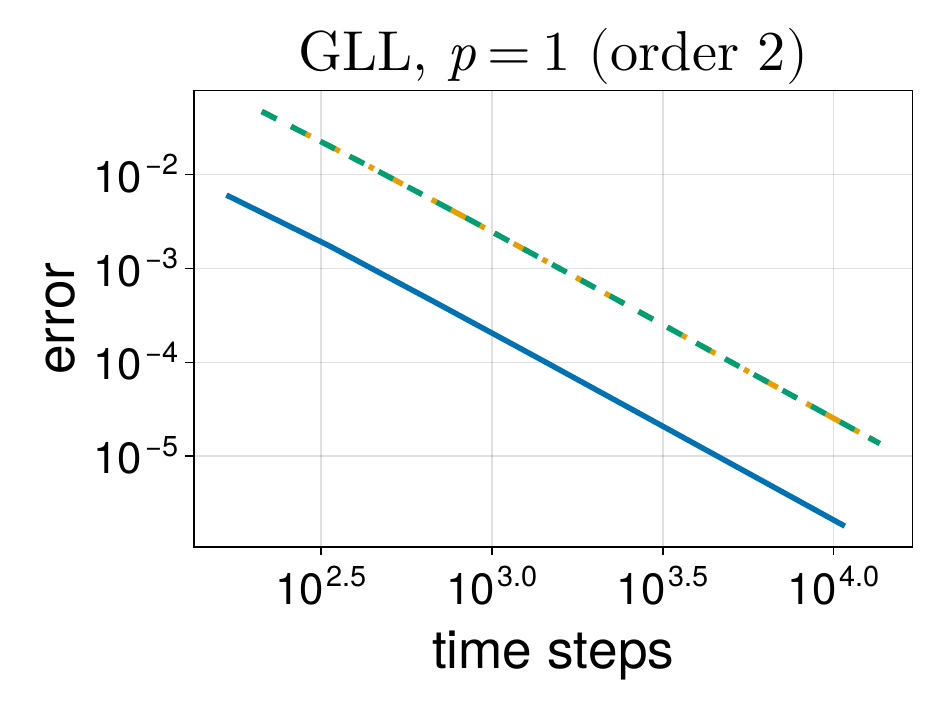}\\
  \end{minipage}
  \begin{minipage}[t]{0.32\textwidth}
    \centering
    \includegraphics[width=\textwidth]{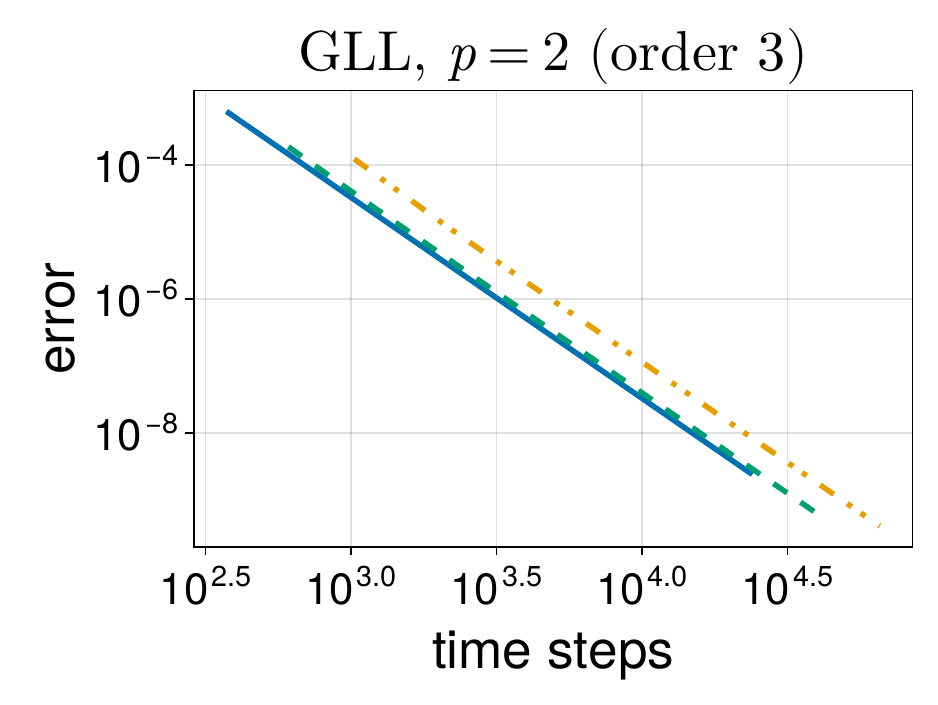}\\
  \end{minipage}
  \begin{minipage}[t]{0.32\textwidth}
    \centering
    \includegraphics[width=\textwidth]{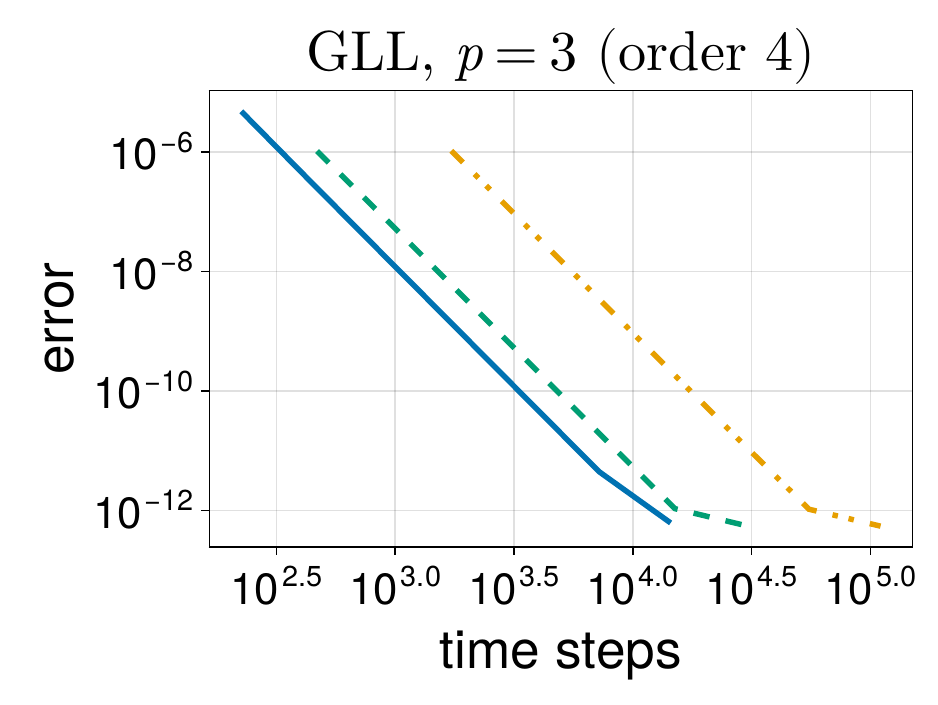}\\
  \end{minipage}\\
  \begin{minipage}[t]{0.32\textwidth}
    \centering
    \includegraphics[width=\textwidth]{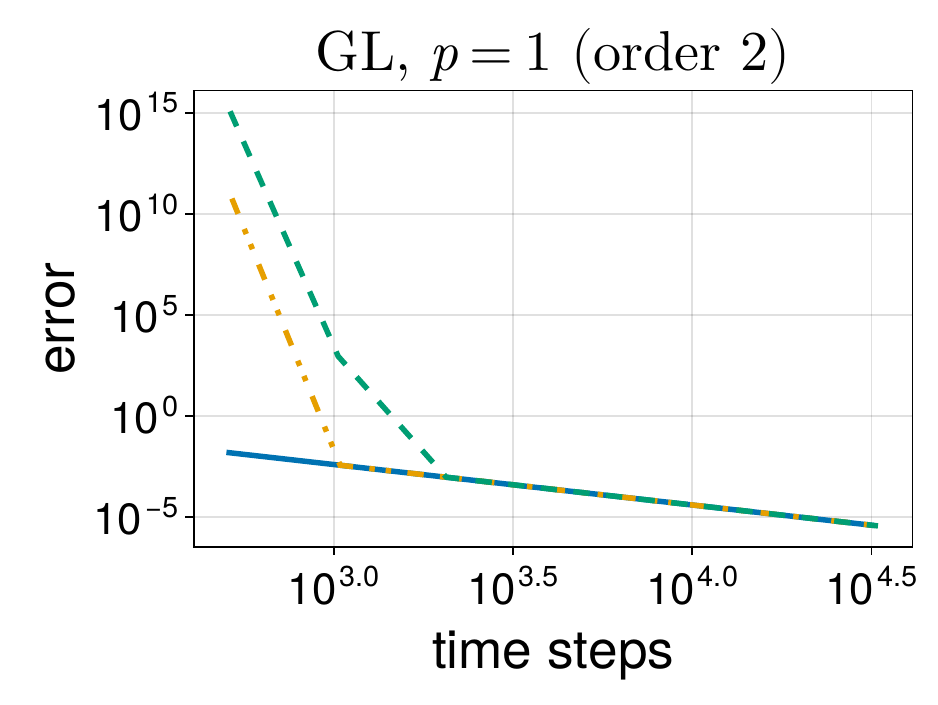}\\
  \end{minipage}
  \begin{minipage}[t]{0.32\textwidth}
    \centering
    \includegraphics[width=\textwidth]{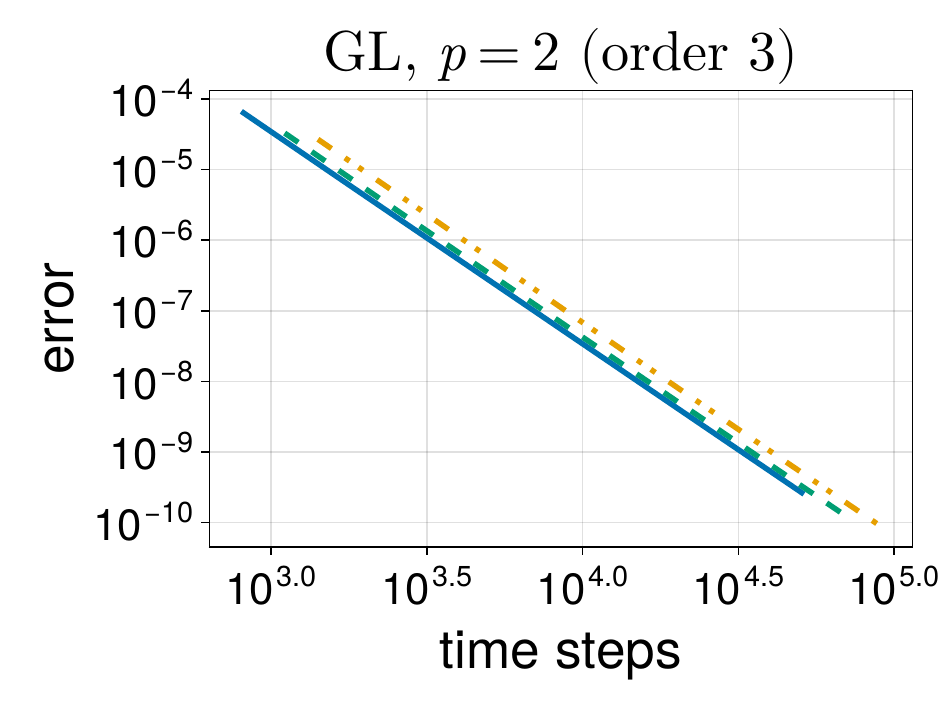}\\
  \end{minipage}
  \begin{minipage}[t]{0.32\textwidth}
    \centering
    \includegraphics[width=\textwidth]{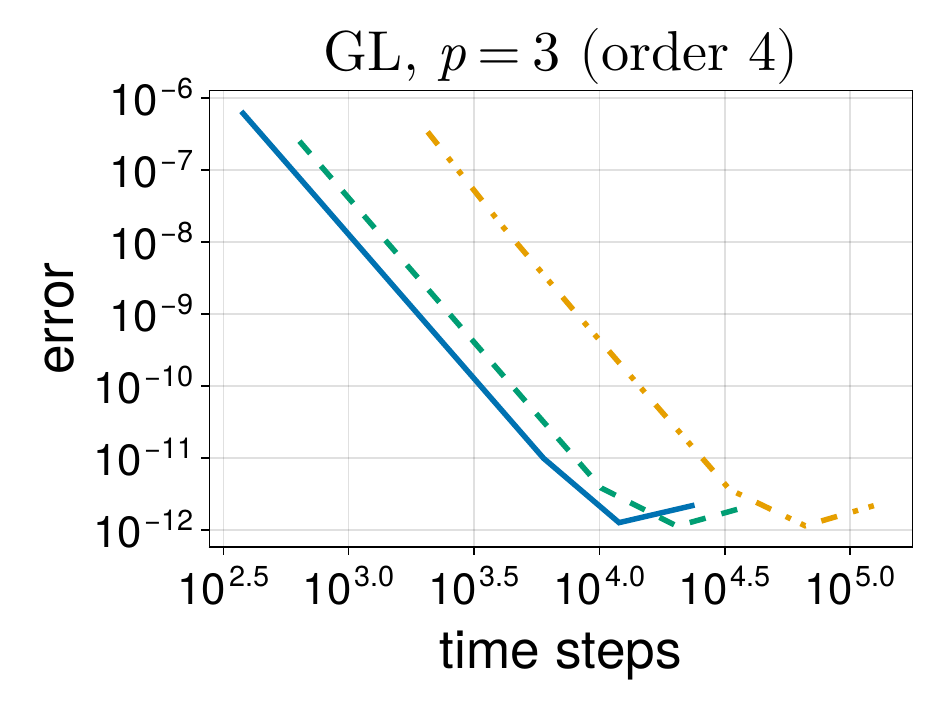}\\
  \end{minipage}
  \centering
  \includegraphics[width=0.7\textwidth, trim ={0 5.5cm 0 5.5cm} , clip]{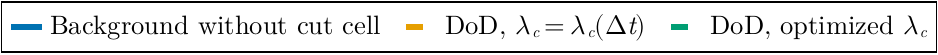}
  \caption{Work precision diagram with errors at the time $T=1$ with respect to the norm $\|\cdot\|_M$.}
  \label{fig:work_precision_1D}
\end{figure}

\subsection{Two-dimensional linear advection in a channel}

\begin{figure}[htbp] \centering
	\begin{tikzpicture}[scale=5.0]
		\draw[step = 0.05] (0.0, 0.0) grid (1.0, 0.7);

		\draw (0.101, 0.0) -- (1.0, 0.4495);
		\draw (0.0,  0.101) -- (1.0, 0.601);

		\draw[fill=gray] (0.101, 0.0) -- (1.0, 0.4495) -- (1.0, 0.0) -- cycle;
		\draw[fill=gray] (0.0,  0.101) -- (1.0, 0.601) -- (1.0, 0.7)  -- (0.0, 0.7) -- cycle;

		\draw[->] (-0.5, 0.2) -- (-0.25, 0.325) node [midway, below] {$\beta$};
	\end{tikzpicture}
	\caption{Channel geometry with cut-cell mesh and advection velocity $\beta$.}
	\label{fig: channel geometry}
\end{figure}
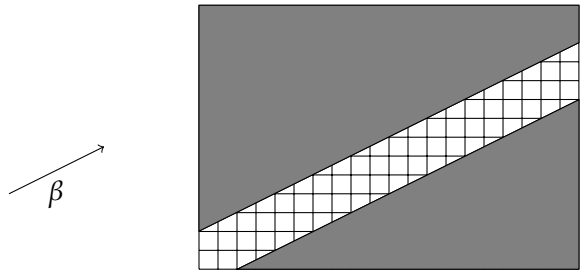

We consider the linear advection equation
\begin{align*}
	\partial_t u(t, x) + \beta \cdot \nabla_x u(t, x) & = 0 & \text{in } \Omega \subset \mathbb{R}^2,\\
	u(0, x) & = u_0(x) & \text{in } \Omega,\\
	u(t, x) & = 0 & \text{on } \partial \Omega,
\end{align*}
with homogeneous Dirichlet data in a channel domain with parallel walls, see Figure~\ref{fig: channel geometry} for a graphical representation. The lower, respectively the upper wall, will start at a position $(x_0, 0)^t$, respectively $(0, x_0)^t$ with an offset $x_0 \in \mathbb{R}$. The geometry is completely defined by the choice of $x_0$ and the angle $\gamma$ between the walls and the $x$-axis. The flow direction $\beta$ is a constant unit vector parallel to the channel walls.

First, we consider the choice $\gamma = 45^{\circ}$. This creates a geometric setup where all small cut cells in need of stabilization share the same size and shape, similar to the one-dimensional case. We can define the size of these small cells, i.e., the cut-cell factor $\alpha$, by adjusting the offset $x_0$, which allows us to run numerical tests with a single size factor.

We partially replicate the 1D analysis of the operator norm given in \eqref{eq:minmax_opnorm} in 2D by estimating an optimal value for the parameter $\lambda_c$ from \eqref{eq:penalty_parameter_defi} for different polynomial degrees.  The results are presented in Figure~\ref{fig:operator norms 2d}, where we plotted different choices of $\lambda_c$ over a variety of cut-cell factors. In particular, we plotted a choice of $\lambda_c$ which is close to the optimal factors presented in the previous section for the one-dimensional case. As seen in the plots these values are not optimal anymore for the two-dimensional channel geometry. However, we were able to obtain new optimal values for $\lambda_c$, which we included in the plots. We note that the function graphs of these new values possess a shape with two peaks similar to the one-dimensional case, highlighting the optimization of different contributions of the scheme.

\begin{figure}[ht]
	\centering
\begin{tikzpicture}[scale=3.5]
\draw[gray, thin] (0.0, 1.0) -- (1.0, 1.0) -- (1.0, 0.0) -- (0.0, 0.0) -- cycle;
\draw[dashed, thick, fill = blue, fill opacity=0.2] (0.0, 1.0) -- (0.9, 1.0 ) -- (0.0, 0.1) -- cycle;
\draw[thick, pattern = dots, pattern color = red] (0.0, 1.0) -- (0.9, 1.0 ) -- (0.0, 0.55) -- cycle;

\draw (0.6, 1.0) arc (180:210:0.25);
\draw (0.45, 1.0) arc (180:225.0:0.45);

\node at (0.69, 0.94) {$\gamma$};
\node at (0.61, 0.79) {$45^\circ$};

\node at (0.2, 0.8) {$E_{\gamma}$};
\node at (0.3, 0.55) {$E_{45^\circ}$};

\node at (-0.15, 0.5) {$e_{\text{in}}$};
\node at (0.5, 1.1) {$e_{\text{out}}$};

\draw[->] (-0.5, 0.5) -- (-0.25, 0.625) node [midway, below] {$\beta^{\gamma}$};

\draw[->] (-0.5, 0.0) -- (-0.25, 0.25) node [midway, below] {$\beta^{45^\circ}$};
\end{tikzpicture}
\caption{Comparison of two triangular cut cells, $E_{45^\circ}$ in blue and $E_\gamma$ with red dots, with different cut angles and flow directions (that are always parallel to the cut). The distance in the normal direction of the inflow face $e_{\text{in}}$ stays the same for both cut cells, no matter the cut angle, so we would expect that in both cases the same amount of stabilization is necessary,  at least if the cut interface is a no-flow face (which we assume to be the case here). Note that we have $ |E_\gamma| = \tan(\gamma) |E_{45^\circ}|$ and thus $ \alpha_\gamma = \tan(\gamma) \alpha_{45^\circ}$ where $\alpha_{\gamma}$ and $\alpha_{45^\circ}$ are the cut-cell factors of $E_\gamma$ and $E_{45^\circ}$, respectively.}
\label{fig: angle dependence of capacity}
\end{figure}

As a first step towards a truly multidimensional setting, we consider additional angles of $\gamma$ for the channel geometry. Once we move away from the case $\gamma = 45^{\circ}$ the cut-cell structure of the geometry becomes more rich and direct control over cell volume fractions is lost. Therefore, the previous optimization procedure is not directly applicable anymore. Based on geometrical considerations (see Figure~\ref{fig: angle dependence of capacity} for an explanation) we instead suggest the general choice
 \begin{equation}
 	\label{eq: gamma_c choice}
 \lambda_c^{\gamma} = \lambda_c^{45^{\circ}}  \tan(\gamma)
 \end{equation}
 for $\gamma \in (0^{\circ}, 45^{\circ}]$ and $\lambda_c^{45^{\circ}}$ the optimal value already obtained and displayed in Figure~\ref{fig:operator norms 2d}. Note that the limit case of $\gamma = 0^{\circ}$ is just a Cartesian grid and requires no stabilization.

We can still compare this choice with an optimal value for $\lambda_c$ obtained over the same offsets $x_0$ that we used for the $\gamma = 45^{\circ}$ channel (even though we want to stress here again, that now these offsets in general do not correspond to cell volume fractions). The results of this comparison are compiled in Table~\ref{tab: operator norms 2d_2}. In general, we can see good agreement between the suggested choice and the optimal value.

Equipped with a good $\lambda_c$ for different polynomial degrees and channel angles we can give numerical estimates of CFL numbers. These are presented in Table~\ref{tab: cfl numbers 2d} and correspond to typical CFL numbers based on the background mesh for DG schemes in combination with our chosen time integration methods.

\begin{figure}
	\begin{minipage}[t]{0.32\textwidth}
		\centering
		\includegraphics[width=\textwidth]{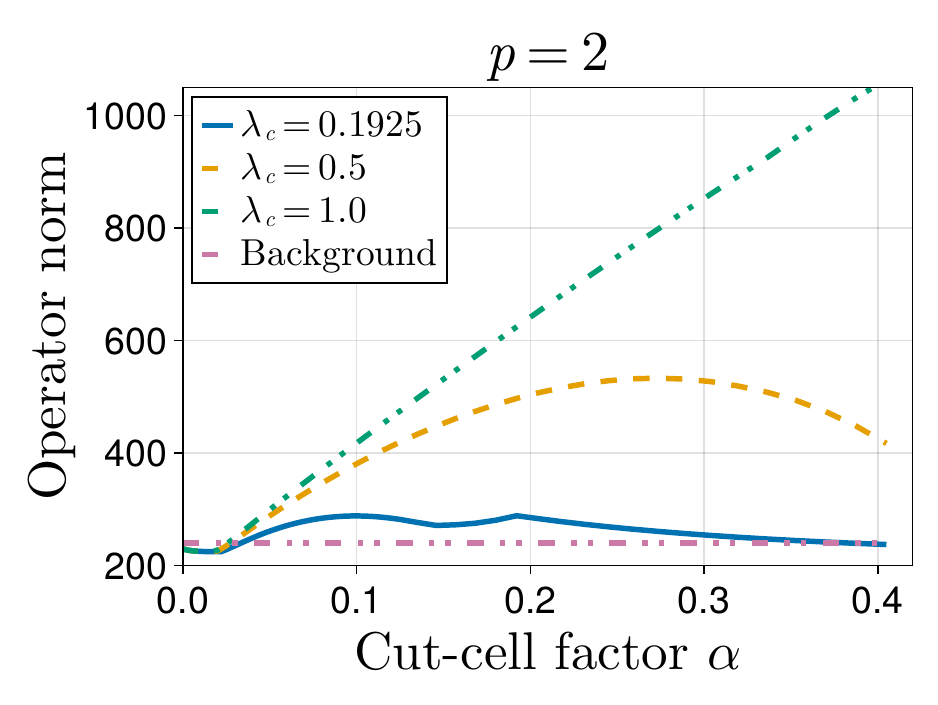}\\
	\end{minipage}
		\begin{minipage}[t]{0.32\textwidth}
		\centering
		\includegraphics[width=\textwidth]{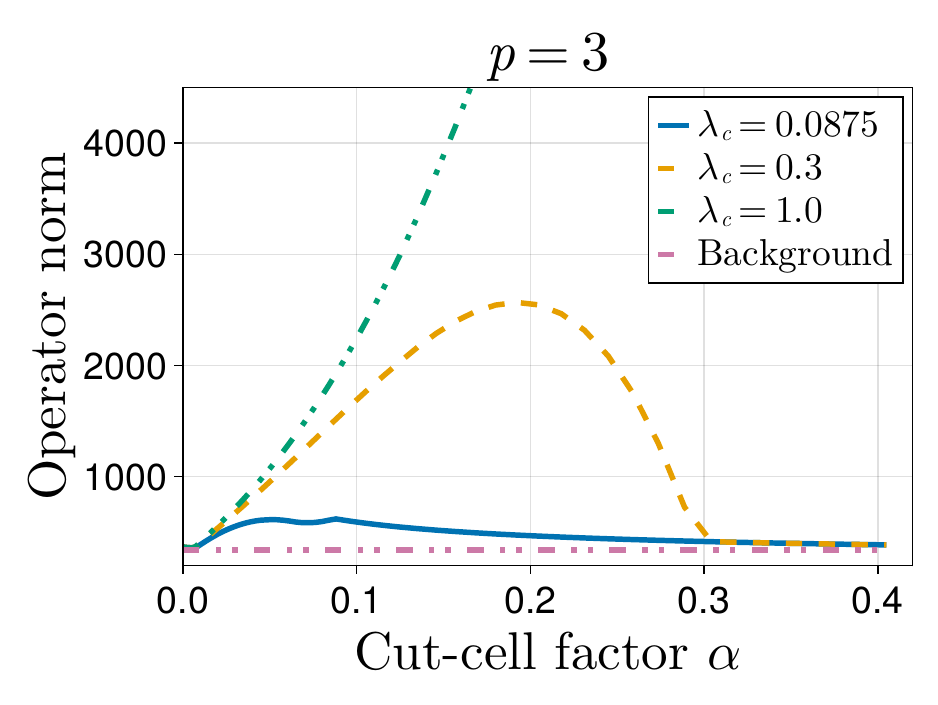}\\
	\end{minipage}
		\begin{minipage}[t]{0.32\textwidth}
		\centering
		\includegraphics[width=\textwidth]{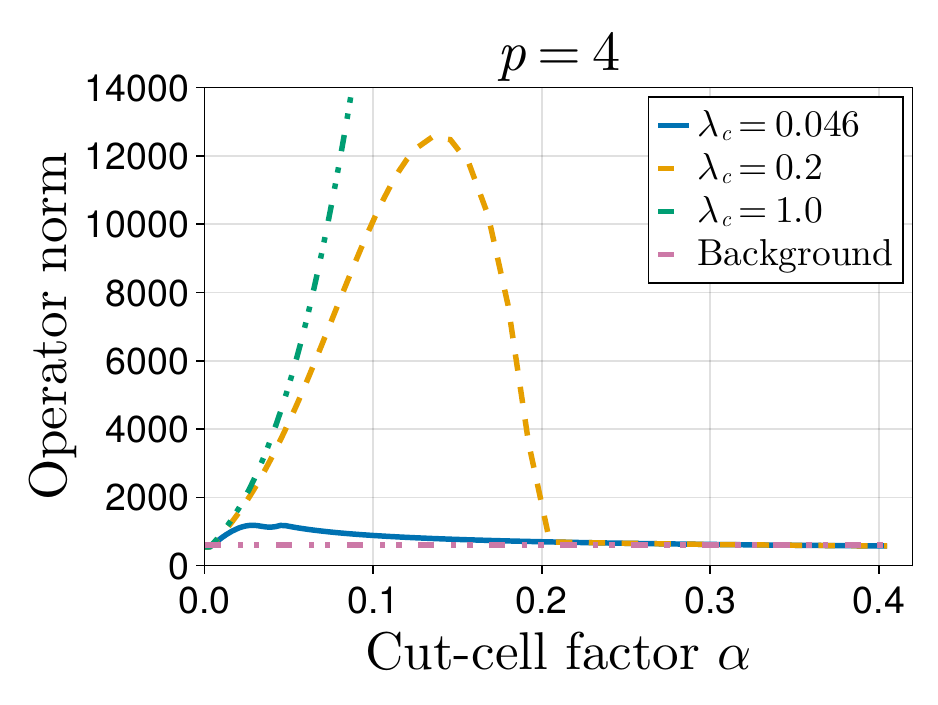}\\

	\end{minipage}
	\caption{Operator norms for polynomial degrees $p=2, 3, 4$ and different choices of $\lambda_c$ on the channel geometry.}
	\label{fig:operator norms 2d}
\end{figure}

\begin{table}
	\caption{Comparison of the choice of $\lambda_c$ for different angles $\gamma$.}
	\label{tab: operator norms 2d_2}
	\centering
	\begin{tabular}{@{~}c@{~}|| c | c | c | c || c | c | c | c || c | c  | c | c }
		%\cline{2-13}
		& \multicolumn{4}{c||}{$p=2$} & \multicolumn{4}{c||}{$p=3$} & \multicolumn{4}{c}{$p=4$}\\
		\cline{2-13}%\hline
		& \!\scriptsize$\lambda_c\tan(\gamma)$\! & $\norm{L}$ & opt. & $\norm{L}$ & \!\scriptsize$\lambda_c\tan(\gamma)$\! & $\norm{L}$ &  opt. & $\norm{L}$ & \!\scriptsize$\lambda_c\tan(\gamma)$\! & $\norm{L}$ &  opt. & $\norm{L}$  \\
		\hline
		$35^{\circ}$& 0.135 & 316 & 0.161 & 299 & 0.061 & 683 & 0.067 & 667 & 0.032 & 1355 & 0.033 & 1338\\
		\hline
		$25^{\circ}$& 0.090 & 342 & 0.107 & 322 & 0.041 & 761 & 0.045& 734&  0.021 & 1496 & 0.023 & 1460 \\
		\hline
		$15^{\circ}$& 0.052 & 360 & 0.062 & 339 & 0.024 & 798 & 0.027 & 787 & 0.012 & 1586 & 0.012 & 1586 \\
		\hline
		$5^{\circ}$& 0.017 & 368 & 0.020 & 347 & 0.008 & 815 & 0.009 & 803 & 0.004 & 1588 & 0.004 & 1588
	\end{tabular}
\end{table}

\begin{table}
	\caption{CFL numbers achieved for different channel geometries and polynomial degrees $p$ with the optimal parameter $\lambda_c$ given by \eqref{eq: gamma_c choice}.}
	\label{tab: cfl numbers 2d}
	\centering
	\begin{tabular}{c || c | c | c }
		& $p=1$ & $p=2$ & $p=3$\\
		\hline
		$45^{\circ}$ & $0.35$ & $0.24$ & $0.47$\\

		\hline
$35^{\circ}$  & $0.38$  &  $0.24$ & $0.52$\\
\hline
$25^{\circ}$ & $0.37$ &  $0.23 $ & $0.47$\\
\hline
$15^{\circ}$ & $0.37$ &  $0.21$ & $0.45$\\
\hline
$5^{\circ}$ & $0.37$  & $0.21 $ & $0.43$\\
	\end{tabular}
\end{table}

\section{Summary and conclusions}
\label{sec:07_summary}
In this work, we have given a proof that the domain-of-dependence (DoD) stabilization
is fully-discrete stable for a specific class of explicit and arbitrarily
high-order Runge-Kutta methods. To perform this analysis, we rewrote the
semidiscretization as a system of ODEs within the DGSEM framework.
In this setting, we also managed to formulate this result for the case
of under-integration with Gauss-Lobatto-Legendre nodes and quadrature. In particular,
the stability question reduced to an operator norm estimate, and we
demonstrated that this operator norm, and therefore the resulting
CFL-like condition, scale without a problematic dependence on the arbitrarily
small cut cells, as intended.

We discovered further CFL restrictions that appear for larger cut cells
treated with the stabilization, especially in higher-order cases.
Our analysis shows that these restrictions originate from the extension operator,
which projects polynomials outside their assigned cell. We also quantified this
issue with an operator norm estimate. Because this unintended behavior bypassed
the implemented stabilization cutoff, we suggested a practical fix by selecting
the penalty parameter to minimize the operator norm across all potential cut-cell sizes.
This effectively results in an optimal CFL-like condition and restores the useful cutoff.
Numerical examples are provided to illustrate and support our theoretical findings.

As an outlook, we also provide a numerical extension to the case of advection
in two spatial dimensions, where we can observe the same phenomenon in a
similar form. Because of the arbitrary shape of the cut cells, this issue becomes
more intricate. Nevertheless, these first results indicate, that by a 
suitable choice of parameters, the time-step restriction remains in the same order
of magnitude as in a respective background scheme.
A solution that combines the approach of the 1D case with
geometric ideas leads to a promising initial result.

\section*{Acknowledgments}

LP and HR were supported by the Deutsche Forschungsgemeinschaft
(DFG, German Research Foundation) within the DFG priority program SPP~2410
(project number 526031774)
and the Daimler und Benz Stiftung (Daimler and Benz foundation,
project number 32-10/22).

GB and CE were supported by the Deutsche Forschungsgemeinschaft (DFG, German Research Foundation) within the DFG priority program SPP~2410
(project number 526031774) and under Germany's Excellence Strategy EXC 2044-390685587, Mathematics Münster: Dynamics–Geometry–Structure.

\appendix
\section{Postponed proofs}
\label{sec:appendix}
In this appendix, we provide the more technical proofs to prepare Theorem~\ref{theorem:operatornorm_estimate_lin_adv}.

\subsection{Proof of Lemma~\ref{lemma:weightss_min_quotient}}
\label{appendix_proof_inter_quad_prop1}
\begin{proof}
    Inequality \cite[eq.~(2.3.16)]{Canuto2006book} yields the weight estimates
    \begin{equation}\label{eq:weight_estimate}
        c_1p^{-1}\sqrt{1-\hx_j^2} \le \hom_j \le c_2p^{-1}\sqrt{1-\hx_j^2},
    \end{equation}
    for every $j$, where $c_1$ and $c_2$ do not depend on $j$ and $p$. The inequality $\eqref{eq:weight_estimate}$ also can be formulated for
    Gauss-Lobatto-Legendre nodes, i.e., we just have to substitute $\hx_j \rightarrow \bx_j$ and $\hom_j \rightarrow \bom_j$ everywhere.\\
    Therefore, we get by using \eqref{eq:nodes_defi_trigGL}
    \begin{align*}
        \frac{\hom_i}{\hom_j} \le \frac{c_2p^{-1}\sqrt{1-\hx_i^2}}{c_1p^{-1}\sqrt{1-\hx_j^2}} = \frac{c_2\sin(\htheta_i)}{c_1 \sin(\htheta_j)}.
    \end{align*}
    Considering the Gauss-Legendre nodes, we can use ${2\theta}/{\pi} \le \sin(\theta) \le 1$ for $\theta \in [0, \frac{\pi}{2}]$
    combined with \eqref{eq:nodes_estimate_trigGL} for $j=1$ to obtain
    \begin{equation*}
        \frac{\hom_i}{\hom_j} \le \frac{c_2}{c_1\frac{1}{p+1}}=\frac{c_2}{c_1}(p+1).
    \end{equation*}
    As the case $p=0$ is trivial, we assume that $p>0$ and can conclude the assumption with $\hat{K}_1:=2c_2/c_1$ for the Gauss-Legendre case.
    
    For the Gauss-Lobatto-Legendre weights, we can perform similar estimates with \eqref{eq:nodes_defi_trigGLL} and \eqref{eq:nodes_estimate_trigGLL}, but notice that we have to treat the lower bound of weight $\bom_0$ differently.
    In that specific case, we can easily exploit a Legendre polynomial-related formula for the weights, that yields (see e.g. \cite[eq. (2.3.11)]{Canuto2006book}, \cite[eq. (2.7.1.12)]{davis2014methods})
    \begin{equation*}
        \bom_j = \frac{2}{p(p+1)}\frac{1}{L_p(\bx_j)^2},
    \end{equation*}
    where $L_p$ is the Legendre polynomial of degree $p$. The normalization property of $L_p$ yields $L_p(1)=1$ and by the symmetry/antisymmetry we can deduce that $L_p(\bx_0)^2=1$. Therefore, we receive an explicit expression of $\bom_0$ and can estimate
    \begin{equation*}
        \frac{\bom_i}{\bom_j} \le \frac{c_{GLL2}p^{-1}\sin(\bar{\theta_i})}{\bom_j}
        \le \frac{c_{GLL2}}{\frac{2}{(p+1)}}\le \bar{K}_1p
    \end{equation*}
    with a similar, appropriate choice of $\bar{K}_1$ as in the previous case.
\end{proof}

\subsection{Proof of Lemma~\ref{lemma:nodes_min_distance}}
\label{appendix_proof_inter_quad_prop2}
\begin{proof}
    Using a trigonometric identity \cite[eq. (2.5.2.1.3)]{Bronstein1987}, we can derive for $1 \le j \le \left\lfloor {(p+1)}/{2} \right\rfloor$, that
    \begin{equation}\label{eq:nodes_estimate_first_step}
        \begin{aligned}
        \abs{\hx_j-\hx_{j-1}} &= \abs{\cos(\htheta_j)-\cos(\htheta_{j+1})} = \abs{2\sin\left(\frac{\htheta_{j+1}-\htheta_j}{2}\right)\sin\left(\frac{\htheta_{j+1}+\htheta_j}{2}\right)}\\
        &\ge \abs{\frac{2}{\pi^2}\left(\htheta^2_{j+1}-\htheta^2_j\right)},
        \end{aligned}
    \end{equation}
    where the last estimate follows from $\sin(x)\ge {2x}/{\pi}\ge0$ for $x\in[0, {\pi}/{2}]$.
    Using \eqref{eq:nodes_estimate_trigGL}, we obtain
    \begin{equation}\label{eq:GaussLegendre_node_estimate}
        \begin{aligned}
            \abs{\hx_j-\hx_{j-1}} &\ge \abs{\frac{2}{\pi^2}\left(\frac{\pi^2(j+\frac{1}{2})^2}{(p+1)^2}-\frac{\pi^2j^2}{(p+2)^2}\right)}\\
            &= \abs{2\left(\frac{(j+\frac{1}{2})^2(p+2)^2-j^2(p+1)^2}{(p+1)^2(p+2)^2}\right)}\\
            &\ge 2\frac{(j+\frac{1}{2})^2-j^2}{(p+2)^2}\ge \frac{5}{2}\frac{1}{(p+2)^2}.
        \end{aligned}
    \end{equation}
    As this estimate holds for all $\hx_j \in (-1,0)$, we now want to include the nodes in $[0,1)$.
    Because of symmetry, it suffices to consider the distance between $\hx_{\left\lfloor{(p+1)}/{2}\right\rfloor}$ and $\hx_{\left\lfloor{(p+1)}/{2}\right\rfloor+1}\ge 0$.
    With the previous derivations in \eqref{eq:nodes_estimate_first_step}, we can deduce in the same way
    \begin{align*}
        \abs{\hx_{\left\lfloor\frac{p+1}{2}\right\rfloor+1}-\hx_{\left\lfloor\frac{p+1}{2}\right\rfloor}} \ge \hx_{\left\lfloor\frac{p+1}{2}\right\rfloor}-0&\ge \frac{2}{\pi^2}\left(
            \left(\frac{\pi}{2}\right)^2-\htheta_{\left\lfloor\frac{p+1}{2}\right\rfloor}^2\right) \ge \frac{1}{2}-\frac{2}{\pi^2}\left(\frac{\frac{p}{2}\pi}{p+2}\right)^2\\
        &= \frac{1}{2}\left(1-\frac{p^2}{(p+2)^2}\right) = \frac{2p+2}{(p+2)^2}\\
        &\ge \frac{p+2}{(p+2)^2} = \frac{1}{p+2}.
    \end{align*}
    Note that ${1}/{(p+2)}\ge 2.5/(p+2)^{2}$ for $p>0$ and therefore the estimate in \eqref{eq:GaussLegendre_node_estimate} dominates.
    Therefore, we can conclude the case of Gauss-Legendre nodes by choosing $\hat{K}_2:= 5/18$, so that for $p\ge1$,
    \begin{equation*}
        \frac{\hat{K}_2}{p^3}=\frac{5}{2\cdot9p^3}\le\frac{5p^3}{2p^3(p+2)^2}\le \hx_i-\hx_j.
    \end{equation*}

    Considering now the Gauss-Lobatto-Legendre nodes $\bx_j$, we proceed as for the Gauss-Legendre nodes using \eqref{eq:nodes_estimate_trigGLL}, resulting in
    \begin{equation}\label{eq:nodes_estimate_GLL}
        \begin{aligned}
        \abs{\bx_j-\bx_{j-1}}&\ge \frac{2}{\pi^2}\left(\bar{\theta}^2_{j+1}-\bar{\theta}^2_j\right) \\
        &\ge 2(j+1^2)\left(\frac{1}{p^2}-\frac{1}{(p+1)^2}\right)\\
        & \ge 8 \frac{2p+1}{p^2(p+1)^2} \ge \frac{16}{(p+1)^3}.
        \end{aligned}
    \end{equation}
    For the difference to an $\bx_i\ge 0$, we again have
    \begin{equation*}
        \bx_{\left\lfloor\frac{p-1}{2} \right\rfloor}-0 \ge \frac{2}{\pi^2}\left(\left(\frac{\pi}{2}\right)^2-\left(\frac{\pi}{p}\right)^2\right)=\frac{p^2-4}{2p^2},
    \end{equation*}
    which does not exceed the estimate of two other consecutive nodes in \eqref{eq:nodes_estimate_GLL} (for $p\ge 3$, as this estimate is just there required).
    For the Gauss-Lobatto-Legendre nodes, we also have to consider the distance to the nodes at the interval borders $\bx_0=-1$ and $\bx_p = 1$. There, we have
    \begin{equation*}
        \bx_1-\bx_0 = 2\sin(\bar{\theta}_1)\ge \frac{2}{\pi^2}\bar{\theta}_1^2 = \frac{2}{p^2},
    \end{equation*}
    which does dominate the estimate for $p\le4$, compared to \eqref{eq:nodes_estimate_GLL}. Therefore, we find a constant $C_{GLL}$, such that for all orders of $p$ it holds
    \begin{equation*}
        \abs{\bx_i-\bx_j} \ge \frac{C_{GLL}}{\left(p+1\right)^3}.
    \end{equation*}
    With $\hat{K_2}:= C_{GLL}/8$, we can estimate analogue as for the Gauss-Legendre nodes to conclude the statement.
\end{proof}

\subsection{Proof of Lemma~\ref{lemma:deriv_interpol_estimate}}
\label{appendix_proof_prep2}
\begin{proof}
  We use the barycentric form of the derivative matrix $D$,
  which is given by \cite[eq. (3.48)]{Kopriva2009}
  \begin{align*}
    D_{ij} &= \frac{\prod_{\substack{m = 0 \\ m \neq i}}^N \tx_i - \tx_m}{\prod_{\substack{m = 0 \\ m \neq j}}^N \tx_j - \tx_m}\left(\frac{1}{\tx_i-\tx_j}\right), \quad i\neq j, \\
    D_{ii} &= -\sum\limits_{\substack{m=0\\ m \neq j}}^M D_{im}.
  \end{align*}
  By applying the negative sum property of the diagonal element, we can expand
  \begin{align*}
    \left(D\Ip\right)_{ik}&=\sum\limits_{j=0}^p D_{ij}\tl_k(\xi_j) = \sum\limits_{\substack{j=0\\ j\neq i}}^p D_{ij}\left(\tl_k(\xi_j)-\tl_k(\xi_i)\right) \\
    &= \sum\limits_{\substack{j=0\\ j\neq i}}^p D_{ij}\left(\prod\limits_{\substack{m=0\\ m\neq k}}^p \frac{1-\tx_m+\alpha(1+\tx_j)}{\tx_k-\tx_m}
    - \prod\limits_{\substack{m=0\\ m\neq k}}^p \frac{1-\tx_m+\alpha(1+\tx_i)}{\tx_k-\tx_m} \right).
  \end{align*}
  In the last line, we see that the products just differ by the part that depends on $\alpha$. We can therefore extract this term, such that
  \begin{align*}
    \prod\limits_{\substack{m=0\\ m\neq k}}^p& \frac{1-\tx_m+\alpha(1+\tx_i)}{\tx_k-\tx_m}\\
    &= \frac{\left(\prod\limits_{\substack{m=0\\ m\neq k}}^p 1-\tx_m\right) + \alpha(1+\tx_i)\left(\sum\limits_{\substack{m_1=0\\ m_1\neq k}}^p\left(\prod\limits_{\substack{m=0\\ m\neq m_1,k}}^p 1-\tx_{m}\right)\right)
     + \alpha^2(1+\tx_i)^2\left(\sum\limits_{\substack{m_1,m_2=0\\ m_1, m_2\neq k}}^p \left(\prod\limits_{\substack{m=0\\ m\neq m_1, m_2,k}}^p1-\tx_m\right)\right) + \hdots}{\prod\limits_{\substack{m=0\\ m\neq k}}^p(\tx_k-\tx_m)}.
  \end{align*}
  Therefore, we can deduce that
  \begin{equation*}
    (D\Ip)_{ik} = \sum\limits_{\substack{j=0 \\ j\neq i}}^p D_{ij}\left(\prod\limits_{\substack{m=0 \\ m \neq k}}^p\frac{1-\tx_m}{\tx_k-\tx_m}-\prod\limits_{\substack{m=0 \\ m \neq k}}^p\frac{1-\tx_m}{\tx_k-\tx_m} + \alpha \mathcal{P}(\alpha)\right)
    =  \alpha \sum\limits_{\substack{j=0 \\ j\neq i}}^pD_{ij}\mathcal{P}(\alpha),
  \end{equation*}
  where $\mathcal{P}(\alpha)$ is a polynomial in $\alpha$.
  Knowing this notation about $(D\Ip)_{ik}$, we can do similar estimates about
  $D_{jk}\mathcal{P}(\alpha)$ as in Lemma~\ref{lemma:interpol_estimate} (which we will not explore again in detail here),
  that will again lead to an exponential scaling in $p$ for larger cut cells.
  As $\mathcal{P}(\alpha)$ is bounded for all $\alpha \in [0, 0.5]$, we can omit this dependency.
  Nevertheless, we remain with the exponential scaling in $p$, and using entry-based matrix norms (see, e.g., \cite[eq. (2.3.8)]{MatrixComputations})
  we can estimate the whole expression by a function $\mathcal{C}(p)$.
\end{proof}

\subsection{Proof of Lemma~\ref{lemma:interpol_estimate_outflowbound}}
\label{appendix_proof_prep3}
\begin{proof}
  In terms of interpolation and extrapolation operators, we can write $\hb_{J_2^0}=\hl^TB_{J}$, where
  \begin{equation*}
    B_J= \left(\tl_1(1+2\alpha), \tl_2(1+2\alpha), \hdots, \tl_{p+1}(1+2\alpha)\right)
  \end{equation*}
  is the interpolation matrix from the respective nodes of the reference element $[-1, 1]$ to the reference outflow cell boundary $1+2\alpha$ (that is limited by $1+2\lambda_c$).
  Therefore, we receive with the extended outflow boundary value $\bar{u}:=B_Ju$
  \begin{align*}
    \|\hb_{J_2^0}u\|_{M_R} &= u^TB_J^TLM_RL^TB_Ju \\
    &=\bar{u}LM_RL^T\bar{u} =\bar{u}^2\sum\limits_{j=0}^{p+1}\omega_j\hat{L_j}^2 = \frac{C_2}{4}\bar{u}^2,
  \end{align*}
  where $\omega_j$ denotes the respective GL or GLL weights. To estimate $\bar{u}$, which is exactly the extrapolation to the outflow boundary of the cut cell, we follow the same arguments as in the proof of Lemma~\ref{lemma:interpol_estimate}
  under consideration that an additional factor $1/\max_{i=1, \hdots, p+1}(\omega_i)$ is required that we include in $C$. This proves the statement.
\end{proof}

\printbibliography

\end{document}